\theoremstyle{plain}
    \newtheorem{thm}{Theorem}[section]
    \newtheorem{lem}[thm]   {Lemma}
    \newtheorem{cor}[thm]   {Corollary}
    \newtheorem{prop}[thm]  {Proposition}
\theoremstyle{definition}
    \newtheorem{defn}[thm]  {Definition}
    \newtheorem{ex}[thm]{Example}
    \newtheorem{rem}[thm]{Remark}
\def\max{\mathrm{max}}
\def\del{\partial}
\def\Ext{\mathrm{Ext}}
\def\Grp{\mathsf{Grp}}
\def\Set{\mathsf{Set}}
\def\cat{\mathsf{cat}}
\def\ker{\mathrm{ker}}
\newcommand{\be}{\begin{enumerate}}
\newcommand{\ee}{\end{enumerate}}
\newcommand{\Z}{\mathbb{Z}}
\newcommand{\F}{\mathcal{F}}
\renewcommand{\L}{\mathcal{L}}
\newcommand{\FIN}{\mathcal{FIN}}
\newcommand{\VCYC}{\mathcal{VCYC}}
\newcommand{\FG}{\mathcal{G}}
\newcommand{\FD}{\mathcal{D}}
\newcommand{\ul}{\underline}
\newcommand{\TC}{{\sf TC}}
\newcommand{\cld}{{\sf cd}}
\newcommand{\gld}{{\sf gd}}
\newcommand{\cldG}{{\sf cd}_\mathcal{G}}
\newcommand{\OD}{\mathcal{O}_\mathcal{D}}
\newcommand{\OF}{\mathcal{O}_\mathcal{F}}
\newcommand{\G}{\Gamma}
\newcommand{\GG}{G\rtimes\G}
\newcommand{\OGG}{\mathcal{O}_\FG (\GG)}
\newcommand{\OP}{\operatorname}
\newcommand{\la}{\langle}
\newcommand{\ra}{\rangle}
\begin{document}

\title[Equivariant cohomological dimensions]{Comparison of equivariant cohomological dimensions}

\author{Mark Grant}

\author{Kevin Li}

\author{Ehud Meir}

\author{Irakli Patchkoria}

\address{Institute of Mathematics,
Fraser Noble Building,
University of Aberdeen,
Aberdeen AB24 3UE,
UK}

\address{Fakult\"{a}t f\"{u}r Mathematik, Universit\"{a}t Regensburg, 93040 Regensburg,
Germany}

\email{mark.grant@abdn.ac.uk}

\email{kevin.li@mathematik.uni-regensburg.de}

\email{ehud.meir@abdn.ac.uk}

\email{irakli.patchkoria@abdn.ac.uk}

\date{\today}

\keywords{equivariant group cohomology, relative group cohomology, Stallings--Swan theorems}
\subjclass[2020]{55N91, 20J05 (Primary); 55M30, 20E36 (Secondary).}

\begin{abstract} We compare three definitions of the equivariant cohomological dimension of a group with operators, coming from Takasu, Adamson and Bredon relative group cohomologies, giving examples of strict inequality in all cases where it can occur. We prove and make use of Stallings--Swan type results which characterise the groups of equivariant cohomological dimension one. Some of our examples are relevant to Farber's problem which asks for an algebraic characterisation of the topological complexity of discrete groups. In particular, the topological complexity of a group is not in general given by a relative cohomological dimension of the product relative to the diagonal subgroup. 
\end{abstract}

\maketitle

\section{Introduction}\label{sec:intro}

Let $G$ and $\G$ be discrete groups, such that $\G$ acts on $G$ by automorphisms. In the older literature $G$ may be described as a \emph{group with operators in $\G$}; here we simply say that $G$ is a \emph{$\G$-group}. The equivariant group cohomology of $\G$-groups has been defined and studied in several articles, including \cite{CG-CO, In1, GMP, In2}. There are three alternative definitions, all of which are based on some form of relative cohomology of the pair $(\GG,\G)$ consisting of the semi-direct product with $\G$ viewed as a subgroup in the standard way. There arise three corresponding definitions of the equivariant cohomological dimension of a $\G$-group. The purpose of this article is to compare these dimensions, giving examples of strict inequality wherever possible. In order to provide such examples, we employ equivariant Stallings--Swan type results which characterise those $\G$-groups having equivariant cohomological dimension one.

Let $(K,L)$ be a subgroup pair, and let $M$ be a $K$-module (by which we mean a $\Z K$-module). There are two classical definitions of relative group cohomology: the version defined by Auslander \cite{Aus} and Massey \cite{Massey} and later studied by Takasu \cite{Takasu1,Takasu2}, which we denote by $H^*(K,L;M)$ and call \emph{Takasu cohomology}; and the version defined by Adamson \cite{Adamson} and later interpreted by Hochschild \cite{Hoch} in terms of relative homological algebra, which we denote by $H^*([K:L];M)$ and call \emph{Adamson cohomology}. We refer to Section \ref{sec:defns} below for more details, and to the introduction to \cite{A-NC-M} for a historical account with further references. Subsequent to Bredon's seminal work on equivariant cohomology \cite{Bredon2}, a third version of relative group cohomology with more general coefficient modules arises as follows. Let $\mathcal{L}$ denote the family of subgroups of $K$ which are conjugate to a subgroup of $L$. One can do homological algebra in the abelian category of modules over the restricted orbit category $\mathcal{O}_\mathcal{L}(K)$.  Given an $\mathcal{O}_\mathcal{L}(K)$-module $\ul{M}$, one can then define the \emph{Bredon cohomology} $H^*_\mathcal{L}(K;\ul{M})$ in a standard way using projective resolutions of the trivial module $\ul{\Z}$. Again we refer to Section \ref{sec:defns} for more details.

Each definition of relative group cohomology gives rise to a definition of relative cohomological dimension, as the largest degree in which the relative cohomology is nonzero for some coefficient module (or $\infty$ if no such largest degree exists). These will be denoted by $\cld(K,L)$, $\cld([K:L])$ and $\cld_\L(K)$ and called respectively the \emph{Takasu dimension}, \emph{Adamson dimension} and \emph{Bredon dimension} of the pair $(K,L)$.

When $G$ is a $\G$-group as in the first paragraph, the relative group cohomology of the pair $(\GG,\G)$ is called the \emph{equivariant group cohomlogy} of $G$. The Takasu version was first studied by Cegarra, Garc\'{i}a-Calcines and Ortega in \cite{CG-CO}, while the Adamson version was first studied by Inassaridze in \cite{In1}. In deference to these authors, we call $\cld(\GG,\G)$ the \emph{Cegarra--Calcines--Ortega dimension}, and $\cld([\GG:\G])$ the \emph{Inassaridze dimension}, of the $\G$-group $G$. Letting $\mathcal{G}$ denote the family of subconjugates of $\G$ in $\GG$, we call $\cld_\mathcal{G}(\GG)$ the \emph{Bredon dimension} of the $\G$-group $G$.

It is known that Adamson cohomology can be viewed as a special case of Bredon cohomology, where we take coefficients in the $\mathcal{L}$-fixed points of a $K$-module \cite{Nuc,PY,BC-VEB,A-NC-MSS}. Therefore $\cld([K:L])\leq \cld_\L(K)$ always holds, and in the context of equivariant group cohomology we have $\cld([\GG:\G])\leq \cld_\FG(\GG)$. Inassaridze conjectured in \cite[\S 8]{In2} that strict inequality can occur. We prove this conjecture in the affirmative in Section \ref{sec:AvsB} below (see Examples \ref{ex:C6} and \ref{ex:3n4n}).

 \begin{thm}\label{thm:A<B}
There exist $\G$-groups $G$ with:
\begin{enumerate}[(a)]
\item $\cld([\GG:\G])=1$ and $\cldG(\GG)=\infty$;
\item $\cld([\GG:\G])=3n$ and $\cldG(\GG)=4n$, for any $n\ge1$.
\end{enumerate}
\end{thm}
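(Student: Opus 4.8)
The plan is to deduce both parts from explicit constructions of $\G$-groups, based on three ingredients. The first is the identification, recalled in \secref{sec:defns}, of Adamson cohomology with Bredon cohomology with coefficients in the fixed-point system $K/H\mapsto M^H$ attached to a $\GG$-module $M$; in particular $\cld([\GG:\G])\le\cldG(\GG)$ always holds, so the problem is to make the right-hand side large while keeping the left-hand side small. The second is the restriction principle: for every subgroup $H\le\GG$,
\[
\cldG(\GG)\ \ge\ \cld_{\mathcal{G}|_H}(H)\qquad\text{and}\qquad \cld([\GG:\G])\ \ge\ \cld([H:\mathcal{G}|_H]),
\]
where $\mathcal{G}|_H=\{P\le H:P\in\mathcal{G}\}$, $\cld_{\mathcal{G}|_H}$ denotes the Bredon dimension for that family, and $\cld([H:\mathcal{G}|_H])$ denotes the corresponding relative dimension computed with resolutions by $\Z H$-modules induced from members of $\mathcal{G}|_H$ (reducing to $\cld([H:L])$ when $\mathcal{G}|_H$ is generated by a single subgroup $L$); the second inequality follows by restricting a relatively $\G$-projective resolution of the trivial module $\Z$ to $\Z H$. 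The third is the equivariant Stallings--Swan theorem for Adamson cohomology, which characterises the $\G$-groups with $\cld([\GG:\G])\le1$.

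For part (a) the idea is to build $\GG=G\rtimes\G$ so that it contains a subgroup $Q\cong C_6$ which is \emph{not} subconjugate to $\G$ although both its order-$2$ and its order-$3$ subgroup are; a short computation in the semidirect product shows this forces $\G$ itself to contain a copy of $C_6$, and one then chooses the $\G$-group $G$ so as to produce such a $Q$ while keeping all the remaining finite subgroups of $\GG$ subconjugate to $\G$. Then $\mathcal{G}|_Q$ is the family of proper subgroups of $C_6$, and since a finite group has infinite Bredon cohomological dimension with respect to any family not containing it, the first restriction inequality gives $\cldG(\GG)=\infty$. At the same time $\GG$ is arranged to satisfy the hypothesis of the Stallings--Swan theorem, so that $\cld([\GG:\G])\le1$; this is compatible with the second restriction inequality because $\cld([Q:\mathcal{G}|_Q])=0$ --- for $Q\cong C_6$ the trivial module $\Z$ is a direct summand of $\Z[C_6/C_2]\oplus\Z[C_6/C_3]$, as $2$ and $3$ are coprime --- and, for every other finite $H\le\GG$, $\mathcal{G}|_H$ consists of all subgroups of $H$, so $\cld([H:\mathcal{G}|_H])=0$. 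Finally a coefficient module with non-zero $H^1([\GG:\G];-)$ pins the Adamson dimension down to $1$, which also settles Inassaridze's conjecture.

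For part (b) the plan is to produce a single $\G$-group $G_0$ with $\cld([G_0\rtimes\G:\G])=3$ and $\cldG(G_0\rtimes\G)=4$, and then pass to products. Here $G_0\rtimes\G$ is built from a $4$-dimensional duality group together with a nontrivial finite action of $\G$, arranged so that all torsion of $G_0\rtimes\G$ is subconjugate to $\G$; the value $\cldG(G_0\rtimes\G)=4$ is obtained from an explicit $4$-dimensional model for the classifying space $E_{\mathcal{G}}(G_0\rtimes\G)$ coming from the geometry, together with a non-vanishing top-degree Bredon cohomology group for a coefficient system that is not of fixed-point type, while $\cld([G_0\rtimes\G:\G])=3$ comes from combining the vanishing of $H^4([G_0\rtimes\G:\G];-)$ on all fixed-point coefficient systems (via the Stallings--Swan tools) with an explicit non-zero class in degree $3$. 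Regarding $G_0^{\times n}$ as a $\G^{\times n}$-group, with associated family $\mathcal{G}_n$ of subgroups of $G_0^{\times n}\rtimes\G^{\times n}$ that are subconjugate to $\G^{\times n}$ --- for which $E_{\mathcal{G}_n}$ is the $n$-fold product of $E_{\mathcal{G}}(G_0\rtimes\G)$ --- a K\"unneth-type additivity for both the Adamson and the Bredon dimension gives $\cld([G_0^{\times n}\rtimes\G^{\times n}:\G^{\times n}])=3n$ and $\cld_{\mathcal{G}_n}(G_0^{\times n}\rtimes\G^{\times n})=4n$.

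The principal difficulty lies in controlling the Adamson dimensions in both directions. For the upper bounds one must isolate exactly which structural property of $\GG$ makes the equivariant Stallings--Swan theorem applicable, and in part (b) one needs in addition the top-degree vanishing of Adamson cohomology on fixed-point systems --- a phenomenon with no counterpart on the Bredon side, and precisely what produces the gap; for the lower bounds one must exhibit explicit non-zero relative cohomology classes. A secondary technical point is the additivity of Adamson cohomological dimension under the relevant products: the Bredon analogue is immediate from the product model for the classifying space, but restricting to fixed-point coefficients requires a separate argument.
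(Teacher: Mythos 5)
Your overall strategy is the right one---compare the Adamson and Bredon dimensions via the fixed-point coefficient functor, restrict to well-chosen finite subgroups using Shapiro's Lemma, and use a Stallings--Swan theorem for the Adamson dimension. The construction you describe for part (a) (a copy of $C_6$ that is not subconjugate to $\G$ but all of whose $p$-subgroups are) matches the paper's Example~\ref{ex:C6}, which realises it via equivariant Bestvina--Brady groups.

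There is however a genuine gap in your argument for the Bredon side of part~(a). You claim that ``a finite group has infinite Bredon cohomological dimension with respect to any family not containing it.'' This is false: the paper's own Example~\ref{ex:A5} notes that $\cld_\mathcal{P}(A_5)=2$, where $\mathcal{P}$ is the family of proper subgroups of $A_5$. The known general lower bound for a family of proper subgroups of a finite group is only $\geq 2$, not $\infty$. To get $\cldG(\GG)=\infty$ the paper has to use the specific structure of $C_6$: it applies the quotient Lemma~5.2 of \cite{GMP} with the normal subgroup $C_3\lhd C_6$, which reduces the family of proper subgroups to the trivial family on $C_2\cong C_6/C_3$, giving $\cld_{\{1\}}(C_2)=\infty$. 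Interestingly, the Adamson-side analogue (Lemma~\ref{lem:infinite}) \emph{is} an infinitude statement, but it applies only to $p$-groups with the family of proper subgroups, and therefore neither proves nor follows from your claimed Bredon-side statement for all finite groups.

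For part~(b), two remarks. First, the degree-$3$ upper bound for the Adamson dimension is not obtained from Stallings--Swan-type tools (which concern dimension one); the paper instead uses the identity $\FIN\cld=\mathrm{vcd}$ for virtually torsion-free groups due to Mart\'{i}nez-P\'{e}rez--Nucinkis, applied after showing $\FG=\FIN$ via a non-abelian $H^1$ computation (Lemma~\ref{lem:GequalsFIN} together with Theorem~\ref{thm:LN} and Lemma~\ref{lem:H1prod}). Second, the K\"unneth additivity you invoke for the Bredon dimension of the $n$-fold products is precisely the hard content of the Degrijse--Petrosyan theorem cited in Example~\ref{ex:3n4n}; the paper does not prove such an additivity principle but rather quotes their result directly for the product groups $\prod H_{L_i}\rtimes\prod C_{p_i}$ and then verifies $\FG=\FIN$. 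Your plan to first treat $n=1$ and then deduce the general case by a K\"unneth argument would require an additivity statement that is nontrivial and is not supplied by the paper.
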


All of these examples arise from equivariant Bestvina--Brady groups, and we make heavy use of a  theorem of Leary and Nucinkis \cite{Leary-Nucinkis} concerning their non-abelian cohomology. The examples in Theorem \ref{thm:A<B}(a) have $G$ free and $\G$ finite, and we prove and make use of the following characterisation of $\G$-groups having Inassaridze dimension less than or equal to one. This should be compared with the classical theorem of Stallings \cite{Sta} and Swan \cite{Swa} which characterises free groups as the groups having cohomological dimension less than or equal to one.

 \begin{thm}\label{thm:Inassaridze1}
 Let $G$ be a $\G$ group with $\G$ finite. Then $\cld([\GG:\G])\leq 1$ if and only if for every prime $p$ and $p$-subgroup $P$ of $\G$, the group $G$ is free with basis a $P$-set. 
 \end{thm}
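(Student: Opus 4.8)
The plan is to identify the Inassaridze dimension $\cld([\GG:\G])$ with a more tractable invariant via the known comparison between Adamson cohomology and Bredon cohomology, and then to apply the equivariant Stallings--Swan machinery. Recall that Adamson cohomology of $(\GG,\G)$ is Bredon cohomology with coefficients in fixed-point modules; a key point in the relative homological algebra underlying Adamson's theory is that $\cld([\GG:\G])$ is governed by relative projectivity with respect to the subgroup $\G$, equivalently by the vanishing of relative $\Ext$. Since $\G$ is finite, the standard transfer/norm argument reduces relative projectivity over $\G$ to relative projectivity over each Sylow $p$-subgroup $P$ of $\G$: a $\Z\GG$-module is relatively $\G$-projective if and only if it is relatively $P$-projective for every prime $p$. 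So the first step is to record that $\cld([\GG:\G]) \le 1$ if and only if, for every prime $p$ and Sylow (equivalently, every) $p$-subgroup $P \le \G$, the relative cohomological dimension $\cld([G\rtimes P : P]) \le 1$. This is the ``localization at primes'' half of the statement and should be essentially formal given the material of Section~\ref{sec:defns}.

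The second and main step is the case of a finite $p$-group $P$: I claim $\cld([G\rtimes P:P]) \le 1$ if and only if $G$ is free with basis a $P$-set. One direction is the construction: if $G$ is free on a $P$-set $S$, then the cellular chain complex of the associated equivariant tree (the Bass--Serre tree of the free product decomposition, with its $P$-action permuting edges according to $S$) gives a length-one relatively $P$-projective resolution of $\Z$ over $\Z(G\rtimes P)$, because the stabilizers of cells lie in the conjugates of $P$; hence the relative dimension is at most one. The converse is where the real work lies. Assuming $\cld([G\rtimes P:P])\le 1$, I would first deduce that $\cld(G) \le 1$ in the ordinary sense (Adamson cohomology with respect to the trivial subgroup recovers ordinary cohomology, and restricting along $G \hookrightarrow G\rtimes P$ can only decrease the relative dimension, or one argues directly that the augmentation ideal $I G$ becomes projective after the relative resolution), so by the classical Stallings--Swan theorem $G$ is free. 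It then remains to upgrade ``$G$ free'' to ``$G$ free on a $P$-set'', i.e. to show that the $P$-action on $G$ can be realized compatibly with a free basis. This should follow from a Bredon-theoretic strengthening: the hypothesis says $\Z$ admits a length-one projective resolution over the orbit category $\mathcal{O}_{\mathcal G}(G\rtimes P)$, and by Leary--Nucinkis-type structure results for groups acting on trees (or directly, since $G$ free means $G\rtimes P$ acts on a tree $T$ with finite stabilizers conjugate into $P$), one can choose $T$ to be the Bass--Serre tree and read off that the edge set is a free $G$-set whose $P$-orbit structure provides the desired $P$-set basis; finiteness of $P$ guarantees the stabilizers are exactly $\mathcal G$-subgroups and that no infinite-order obstruction appears.

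The main obstacle I anticipate is precisely this last upgrade — passing from ``$G$ is free and $\G$ finite acts on it'' to ``$G$ is free with basis a $P$-set for each $p$-subgroup $P$''. A priori a finite group can act on a free group without permuting any free basis (for instance, $C_2$ acting on $F_2$ by swapping the generators does preserve a basis, but one must rule out exotic actions in general), and the content of the theorem is that Inassaridze dimension one forces the nice situation. I would handle this by translating the relative-projectivity hypothesis into the statement that $G\rtimes P$ acts on a contractible $1$-complex (tree) with all isotropy groups in $\mathcal G$ — i.e. finite, hence conjugate into the $p$-group $P$ by Sylow theory inside $G\rtimes P$ combined with the fact that $G$ is torsion-free — and then invoke Bass--Serre theory: $G\rtimes P$ splits as a graph of groups with finite vertex groups and trivial edge groups is too strong, but the edge $G$-orbits form a free $G$-set on which $P$ acts, and choosing a lift gives the $P$-set basis. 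Making the identification ``relative $\cld \le 1$ $\Leftrightarrow$ action on a tree with isotropy in $\mathcal G$'' precise in the Bredon/Adamson framework, and checking it is genuinely the $P$-set condition and not merely ``free plus some $P$-action'', is the crux; everything else is bookkeeping with transfer arguments and the classical Stallings--Swan input.
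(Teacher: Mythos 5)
Your overall architecture (localise at Sylow $p$-subgroups via a transfer/norm argument, then settle the $p$-group case) is sound and in fact closely mirrors the paper's treatment of the forward implication, where the conclusion is assembled from the $P$-freeness of $G$ over each Sylow subgroup $P$ exactly by the transfer argument you describe (see the proof of Theorem~\ref{thm:H1Adamson}, which adapts Lemma~\ref{lem:genesis}). The forward direction of your $p$-group step is also fine: $G$ free on a $P$-set gives a $G\rtimes P$-tree with cell stabilisers subconjugate to $P$, whence a length-one relative resolution.

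The gap is exactly where you locate the crux, and the specific move you propose there would not work. You write that the hypothesis ``says $\Z$ admits a length-one projective resolution over the orbit category $\mathcal{O}_{\mathcal G}(G\rtimes P)$.'' It does not. The hypothesis $\cld([G\rtimes P:P])\le 1$ is the Adamson/Inassaridze condition, which (by Proposition~\ref{prop:AleB}) is a statement about Bredon cohomology with \emph{coinduced} coefficients only; it is strictly weaker than $\cld_{\mathcal P}(G\rtimes P)\le 1$, which is what is actually equivalent to the existence of a tree with stabilisers in the family (and which, by Theorem~\ref{thm:ESS}, is already known to be equivalent to $P$-freeness). The entire point of this section of the paper is that the two dimensions can differ, so a converse argument that silently upgrades the Adamson bound to the Bredon bound, or to a tree action, is circular. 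Concretely: from $I(G)$ being $(G\rtimes P,P)$-projective one does not get, by any soft argument, a contractible $1$-complex with stabilisers subconjugate to $P$.

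What is actually needed — and what your proposal has no analogue of — is Lemma~\ref{lem:infinite}: for a finite $p$-group $P$ and the family $\F$ of proper subgroups, $\F\cld(P)=\infty$. The paper's converse runs through non-abelian cohomology. If $G$ is free but not $P$-free, Lemma~\ref{lem:nonabH1} produces a subgroup $Q\le P$ with $H^1(Q;G)\ne\{1\}$, hence a finite subgroup $\tilde Q\le G\rtimes P$ not in $\mathcal{G}$; choosing $\tilde Q$ minimal and applying Shapiro's Lemma~\ref{lem:ShapiroRel} reduces to $\F\cld(\tilde Q)$ with $\F$ the family of proper subgroups of the $p$-group $\tilde Q$, which is infinite by Lemma~\ref{lem:infinite}, contradicting $\cld([G\rtimes P:P])\le 1$. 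That Euler-characteristic-mod-$p$ computation is the genuine new input; without it (or a substitute), the converse cannot be closed, and the failure mode of your plan is precisely the conflation of the Adamson and Bredon dimensions that Examples~\ref{ex:C6}, \ref{ex:A5} show is real.
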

 
 Theorem \ref{thm:A<B}(b) is deduced from work of Mart\'{i}nez-P\'{e}rez \cite{M-P} and Degrijse and Petrosyan \cite{DP}, who gave examples of groups whose cohomological dimension relative to the family $\FIN$ of finite subgroups is less than their Bredon dimension with respect to $\FIN$. These groups are semi-direct products associated to equivariant Bestvina--Brady groups, and we use non-abelian cohomology to show that $\FIN=\FG$ in these examples.

In Section 4 we turn to a comparison of Takasu and Bredon dimensions. We remark that in a recent pair of papers \cite{A-NC-M, A-NC-MSS} Arciniega-Nev\'{a}rez, Cisneros-Molina and S\'{a}nchez Salda\~{n}a have studied a natural comparison map between Takasu and Adamson cohomology. They showed that malnormality is a necessary and sufficient condition for this map to be an isomorphism. Since our focus here is on dimensions, there appears to be very little overlap between their work and ours. We are also interested more specifically in equivariant group cohomology, which amounts to considering pairs $(K,L)$ with the additional assumption that $L$ is a retract of $K$. While it is relatively easy to give examples where the Takasu dimension exceeds the Bredon dimension for pairs with $L$ normal in $K$, we are able to produce a large class of non-normal examples using the following Stallings--Swan type result for Takasu equivariant group cohomology, given as Theorem \ref{thm:Stallings-Swan-Takasu} below.

\begin{thm}\label{thm:Stalling-Swan-Takasu}
Let $G$ be a $\G$-group. Then $\cld(G\rtimes\G,\G)\le1$ if and only if $G$ is free with basis a free $\G$-set.
\end{thm}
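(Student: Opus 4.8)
The plan is to turn the theorem into a statement about projectivity of one $\Z[\GG]$-module. A standard computation with a relative (Takasu) resolution identifies, for $n\ge2$, the cohomology $H^n(\GG,\G;M)$ with $\Ext^{n-1}_{\Z[\GG]}(I,M)$, where $I=\ker\!\big(\Z[\GG/\G]\xrightarrow{\varepsilon}\Z\big)$ is the relative augmentation module; equivalently, since $\G$ is a retract of $\GG$, the long exact sequence of the pair splits and $H^n(\GG,\G;M)$ is the kernel of restriction $H^n(\GG;M)\to H^n(\G;M)$. Hence $\cld(\GG,\G)\le1$ if and only if $I$ is projective over $\Z[\GG]$. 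Now the $\GG$-set $\GG/\G$ is the underlying set of $G$ with $\GG$ acting by $(h,\delta)\cdot g=h\cdot{}^{\delta}g$, so $\Z[\GG/\G]\cong\Z G$ (with this twisted action) and $I\cong I_G$, the augmentation ideal of $\Z G$, carrying the $\Z[\GG]$-structure in which $G$ acts by left translation and $\G$ acts by its automorphisms of $G$. Thus the theorem is equivalent to: $I_G$ is $\Z[\GG]$-projective if and only if $G$ is free with basis a free $\G$-set.

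For the implication $(\Leftarrow)$ I would argue directly. If $G$ is free with basis a free $\G$-set $S\cong\G\times B$, consider the $\Z[\GG]$-homomorphism $\bigoplus_{b\in B}\Z[\GG]\to I_G$ sending the basic generator $e_b$ of the $b$-th summand to $(1,b)-1$. Tracing through the actions, it carries the $\Z$-basis element $(h,\delta)e_b$ to $h\cdot((\delta,b)-1)$, and this is a bijection onto the $\Z$-basis $\{g(s-1):g\in G,\ s\in S\}$ of $I_G$. Hence $I_G$ is in fact $\Z[\GG]$-\emph{free}, and $\cld(\GG,\G)\le1$ follows.

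For $(\Rightarrow)$, assume $I_G$ is $\Z[\GG]$-projective. Since $\Z[\GG]$ is free as a left $\Z G$-module, $I_G$ is $\Z G$-projective, so $\cld G\le1$ and $G$ is free by the classical Stallings--Swan theorem \cite{Sta,Swa}. The remaining task is to show that the $\G$-action on $G$ admits a free basis which is a free $\G$-set. The route I would take is to realise the short exact sequence $0\to I_G\to\Z[\GG/\G]\to\Z\to0$ as the augmented cellular chain complex of a $\GG$-tree $T$ with one orbit of vertices $\GG/\G$ (stabiliser $\G$) and free orbits of edges; Bass--Serre theory then presents $\GG$ as a free product $\G*F$ with $F$ free, compatibly with the retraction $\GG\to\G$, and the Kurosh subgroup theorem applied to $G=\ker(\GG\to\G)$ exhibits $G$ as free with basis the free $\G$-set $\G\times(\text{a basis of }F)$. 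An equivalent, purely algebraic formulation: $\Z[\GG]$-freeness of $I_G$ says exactly that the semilinear $\G$-action on the free $\Z G$-module $I_G$ has a $\Z G$-basis permuted freely by $\G$, and one then transports such a basis to a free generating set of $G$ using $\Aut(G)$.

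The main obstacle is the passage from ``$I_G$ is $\Z[\GG]$-projective'' to ``$I_G$ is $\Z[\GG]$-free'' (equivalently, that the tree $T$ genuinely exists): this is the analogue of the difficult part of classical Stallings--Swan and should be built on it, together with bookkeeping for the family of subconjugates of $\G$. It is precisely here that the hypothesis $\cld(\GG,\G)\le1$ is stronger than what it implies after restricting separately to $G$ and to $\G$ --- for instance, for $G=\Z$ with $\G=\Z/2$ acting by inversion, $G$ is free and $\G$ acts freely on $G\setminus\{1\}$, yet $\GG=D_\infty$ carries a second conjugacy class of involutions outside the family and $\cld(D_\infty,\Z/2)=2$. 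Making the argument work uniformly for arbitrary --- possibly infinite, possibly torsion --- $\G$ is the last delicate point.
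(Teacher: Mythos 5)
Your reduction to the projectivity of the relative augmentation module $I\cong I_G$ is correct and is exactly what the paper does (Lemma~\ref{lem:Tcd1}); the identification of the $\GG$-module structure on $I_G$ is also right. (One small imprecision: the retraction $\GG\to\G$ does \emph{not} split the long exact sequence \eqref{eq:Takasuexact} for arbitrary coefficients $M$, since the composite $\G\hookrightarrow\GG$ with $M$ restricted need not recover $M$. But this aside does no harm, as your main argument rests only on Lemma~\ref{lem:Tcd1}.)

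Your proof of $(\Leftarrow)$ is correct and is in fact a cleaner route than the paper's. You show directly that the map $\bigoplus_{b\in B}\Z[\GG]\to I_G$, $e_b\mapsto s_b-1$, carries the $\Z$-basis $\{(h,\delta)e_b\}$ bijectively onto the $\Z$-basis $\{g(s-1):g\in G,\, s\in S\}$ of $I_G$, so $I_G$ is $\Z[\GG]$-\emph{free}; projectivity and hence $\cld(\GG,\G)\le1$ follow at once. The paper instead builds the Cayley graph $T(G,S)$, checks that $\GG$ acts on it admissibly and $0$-unfreely with vertex orbit $\GG/\G$, and then invokes Alonso's Theorem~\ref{thm:DA}. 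Your algebraic argument bypasses the geometric input entirely in this direction, and moreover establishes the stronger conclusion that $I_G$ is free rather than merely projective.

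For $(\Rightarrow)$ there is a genuine gap, which you yourself flag. The passage from ``$I_G$ is $\Z[\GG]$-projective'' to the existence of a $\GG$-tree $T$ with a single vertex orbit of type $\GG/\G$ and free edge stabilisers is precisely the content of Alonso's theorem (Theorem~\ref{thm:DA} in the paper), which for $n=1$ goes back to Dicks~\cite{Dicks}; this is not a routine bookkeeping step and the paper quotes it as a black box. Once one has the tree, your sketch via Bass--Serre and Kurosh can be made to work, but the paper takes a slightly shorter path: $G$ acts freely on $T$ (since $G\lhd\GG$ with $G\cap\G=1$ forces any $\GG$-conjugate of $\G$ to meet $G$ trivially), so the quotient $X=T/G$ is a graph with one vertex, i.e.\ a bouquet of circles, on which $\G$ acts; admissibility and $0$-unfreeness in $T$ are then used to show $\G$ freely permutes the oriented edges of $X$, which is exactly the desired free $\G$-basis of $G$. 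Your ``purely algebraic'' alternative --- transporting a $\Z G$-basis of $I_G$ on which $\G$ acts freely to a free generating set of $G$ --- is too vague to assess as stated: it is not clear that an arbitrary $\Z G$-basis of $I_G$ is of the form $\{s-1:s\in S\}$, so this step would need justification of the same order of difficulty as Dicks's theorem. Your $D_\infty$ example correctly illustrates why the extra hypotheses matter.
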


Here we make use of a characterisation of Takasu dimension due to Alonso \cite{Alonso}, which in dimension one is largely due to Dicks \cite{Dicks}. This result should be compared with the equivariant Stallings--Swan theorem for Bredon equivariant group cohomology \cite[Theorem 1.5/4.4]{GMP}, which states that a $\G$-group $G$ with $\G$ finite has $\cldG(\GG)\le1$ if and only if $G$ is free with basis a (not necessarily free) $\G$-set. In fact, there are two possible notions of ``$\G$-free $\G$-group", corresponding to the images of the left adjoints of the two possible forgetful functors $U:\G$-$\Grp\to\Set$ and $U:\G$-$\Grp\to\G$-$\Set$. The first corresponds to Cegarra--Calcines--Ortega equivariant cohomological dimension one, the second to Bredon equivariant dimension one.

As a corollary to Theorem \ref{thm:Stalling-Swan-Takasu}, we obtain that the projective objects in the category of $\G$-groups are precisely the free $\G$-groups with basis a free $\G$-set, see Corollary \ref{cor:projectives}.

To conclude the paper we describe examples where the Bredon dimension exceeds the Cegarra--Calcines--Ortega dimension. This is relatively easy without the retraction assumption (i.e., for general relative group cohomology rather than equivariant group cohomology). 

\begin{prop}\label{prop:Klein}
Let $G$ be the fundamental group of the Klein bottle. Regard $G$ as a $G$-group, acting on itself by conjugation. Then $\cld(G\rtimes G,G)=4$ and $\cldG(G\rtimes G)=\infty$.
\end{prop}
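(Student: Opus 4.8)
\textit{Proof proposal.}
Since here $\G=G$ acts on $G$ by conjugation, the assignment $(g,\gamma)\mapsto(g\gamma,\gamma)$ defines an isomorphism $G\rtimes G\cong G\times G$ carrying the subgroup $\G=\{(e_G,\gamma)\}$ onto the diagonal $\Delta G$; under it the family of subconjugates of $\G$ in $G\rtimes G$ becomes the family $\mathcal G$ of subconjugates of $\Delta G$ in $G\times G$. So it is enough to prove that $\cld(G\times G,\Delta G)=4$ and $\cldG(G\times G)=\infty$.

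For the Takasu dimension I would use the long exact sequence of the pair,
\[
\cdots\to H^{n-1}(\Delta G;M)\to H^{n}(G\times G,\Delta G;M)\to H^{n}(G\times G;M)\to H^{n}(\Delta G;M)\to\cdots .
\]
The Klein bottle is a closed aspherical surface, so $G$ is a Poincar\'e duality group of dimension $2$; hence $\cld(\Delta G)=\cld(G)=2$, while $G\times G$ is a Poincar\'e duality group of dimension $4$, so $\cld(G\times G)=4$ and $H^{4}(G\times G;\Z[G\times G])\neq 0$. Feeding this into the exact sequence gives $H^{n}(G\times G,\Delta G;M)=0$ for $n\ge 5$ and a natural isomorphism $H^{4}(G\times G,\Delta G;M)\cong H^{4}(G\times G;M)$; with $M=\Z[G\times G]$ the latter is non-zero, so $\cld(G\times G,\Delta G)=4$.

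The real content is $\cldG(G\times G)=\infty$. Here I would use the standard fact that Bredon cohomological dimension does not increase under restriction, namely $\cldG(G\times G)\ge\cld_{\mathcal G\cap H}(H)$ for any subgroup $H\le G\times G$, where $\mathcal G\cap H=\{K\le H:K\in\mathcal G\}$, and apply it with $H=\la w\ra$ the infinite cyclic subgroup generated by $w=(ab,b)$ (writing $G=\la a,b\mid bab^{-1}=a^{-1}\ra$). The computation that makes this work is that $(ab)^{2}=b^{2}$, so that $w^{2}=(b^{2},b^{2})\in\Delta G$ — and more generally $w^{2k}=(b^{2k},b^{2k})$ generates a subgroup of $\Delta G$ — whereas $w^{2k+1}=(ab^{2k+1},b^{2k+1})$ generates a subgroup of some twisted diagonal $\{(x,cxc^{-1}):x\in G\}$ only when $ab^{2k+1}$ is $G$-conjugate to $b^{2k+1}$, which never occurs because these elements have distinct images in $G^{\mathrm{ab}}\cong\Z/2\times\Z$. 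Thus $\mathcal G\cap\la w\ra$ is precisely the family of subgroups of the index-two subgroup $\la w^{2}\ra$ of $\la w\ra\cong\Z$, equivalently the pullback along $\la w\ra\to\la w\ra/\la w^{2}\ra\cong\Z/2$ of the trivial family. For such a family any model $X$ of the corresponding classifying space restricts on the $\la w^{2}\ra$-fixed points to a contractible complex on which $\Z/2=\la w\ra/\la w^{2}\ra$ acts freely (freely because $\la w\ra$ itself is not in the family), which is impossible for finite-dimensional $X$; combined with the L\"uck--Meintrup theorem $\gld_\mathcal F\le\max\{3,\cld_\mathcal F\}$ this forces $\cld_{\mathcal G\cap\la w\ra}(\la w\ra)=\infty$, hence $\cldG(G\times G)=\infty$.

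I expect the main obstacle to be exactly this last step: exhibiting, inside the torsion-free group $G\times G$, an infinite cyclic subgroup whose square is absorbed by a conjugate of $\Delta G$ but which is not itself so absorbed, so that the family $\mathcal G$ hides a copy of $\Z/2$. The choice $w=(ab,b)$ succeeds because $b$ is not a square in $G$, yet $b^{2}$ has the non-central square root $ab$, which lies in a different $G$-conjugacy class from $b$; identifying $\mathcal G\cap\la w\ra$ and doing the conjugacy bookkeeping in the Klein bottle group is where the work is, the other ingredients (the restriction inequality and the passage from geometric to cohomological dimension) being routine. As a check one may instead argue with Adamson cohomology: restricting $H^{*}([G\times G:\Delta G];-)$ to $\la w\ra$ exhibits $H^{*}([\la w\ra:\la w\ra\cap\Delta G];-)=H^{*}([\Z:2\Z];-)\cong H^{*}(\Z/2;(-)^{\la w^{2}\ra})$ (here $\la w\ra\cap\Delta G=\la w^{2}\ra$) as a direct summand via the Mackey formula, and this is non-zero in every degree, so already the Inassaridze dimension $\cld([G\times G:\Delta G])$ is infinite; since Adamson cohomology is a special case of Bredon cohomology, this re-proves $\cldG(G\times G)=\infty$.
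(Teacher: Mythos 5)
Your proof is correct and follows essentially the same route as the paper: both restrict along Shapiro's Lemma to the infinite cyclic subgroup generated by $(a,b)$ in the presentation $\langle a,b\mid a^2=b^2\rangle$ (your $w=(ab,b)$ is exactly this element under the change of generators $a\mapsto ab$, $b\mapsto b$ between the two standard Klein-bottle presentations), identify the intersected family as the one generated by the index-two subgroup $\langle w^2\rangle$, and conclude $\cld_{\mathcal G}=\infty$ from the hidden $\Z/2$. Two minor remarks: the paper closes the last step algebraically, using that $\cld_{\langle L\rangle}(K)=\cld(K/L)$ when $L\trianglelefteq K$, which is shorter than your geometric argument via $E_{\mathcal F}$-spaces and L\"uck--Meintrup (though yours is also valid); and in your alternative Adamson check, the phrase ``direct summand via the Mackey formula'' overstates what is available --- Shapiro's Lemma for $\mathcal F$-relative cohomology (Lemma~\ref{lem:ShapiroRel}) gives only the inequality $\cld_{\mathcal{F}\cap P}(P)\le \mathcal{F}\cld(K)$, not a splitting, but that inequality is all you need and is what the paper uses.
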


This example has relevance for Farber's problem from \cite{Far06}, which asks for an algebraic characterisation of the topological complexity of discrete groups. Recall that $\TC(X)$ is defined to be the sectional category \cite{Schwarz} of the free path fibration $X^I\to X\times X$. It is a numerical homotopy invariant satisfying the bounds $\cat(X)\leq \TC(X)\leq \cat(X\times X)$ in terms of the Lusternik--Schnirelmann category \cite{Far03}. When $X$ is aspherical these invariants depend only on the fundamental group $G:=\pi_1(X)$ and we write $\TC(G)$ and $\cat(G)$ for $\TC(X)$ and $\cat(X)$. It follows from results of Eilenberg--Ganea \cite{EG}, Stallings \cite{Sta} and Swan \cite{Swa} that $\cld(G)\leq \TC(G) \leq \cld(G\times G)$. These bounds are often not sharp, and examples show that $\TC(G)$ can take any value between $\cld(G)$ and $\cld(G\times G)$. Recently the first author with Farber, Lupton and Oprea \cite{FGLO} proved that $\TC(G)\leq \cld_\mathcal{D}(G\times G)$, the Bredon dimension of $G\times G$ with respect to the family $\mathcal{D}$ of subconjugates of the diagonal subgroup $\Delta(G)$. On the one hand, this upper bound in terms of Bredon dimension is only known to improve on the upper bound $\cld(G\times G)$ in the case of free abelian groups (where the value of $\TC(G)$ was already known). On the other hand, no examples of strict inequality $\TC(G)<\cld_\mathcal{D}(G\times G)$ were known. Proposition~\ref{prop:Klein} furnishes such an example, since for a group $G$ acting on itself by conjugation the pair $(G\rtimes G, G)$ is isomorphic to the pair $(G\times G,\Delta(G))$. Therefore for $G$ the fundamental group of the Klein bottle we have $\cld_\mathcal{D}(G\times G)=\cldG(G\rtimes G)=\infty$, while Cohen and Vandembroucq showed in \cite{CV} that $\TC(G)=4$.

\subsection*{Acknowledgements}
We thank the organisers of the British Topology Meeting 2022 and the BIRS workshop 22w5182 during which parts of this work were discussed. We thank Dave Benson for fruitful discussions. 
K.L.\ was supported by the SFB~1085 \emph{Higher Invariants} (Universit\"at Regensburg, funded by the DFG).

\section{Relative group cohomology}\label{sec:defns}

We begin by defining the various flavours of relative group cohomology of a group pair $(K,L)$, and proving some of their basic properties. We work over the integers throughout, so \emph{$K$-module} is synonymous with \emph{$\Z K$-module}. Important examples of $K$-modules are the trivial module $\Z$, the permutation module $\Z(K/L)$, and the module $I(K/L)=\ker (\epsilon: \Z(K/L)\to\Z)$ which arises as the kernel of the obvious augmentation homomorphism. 

\subsection{Takasu cohomology}

\begin{defn}\label{def:Takasu}
Let $M$ be a $K$-module. The \emph{Takasu relative cohomology} of the pair $(K,L)$ with coefficients in $M$ is
\[
H^i(K,L;M):=\Ext^{i-1}_K(I(K/L),M)
\]
for $i\ge1$, and $H^0(K,L;M):=0$.
\end{defn}

The above notion has been studied by Auslander \cite{Aus}, Takasu \cite{Takasu2} and Ribes \cite{Ribes}, and in a more general form with $L$ replaced by a collection of subgroups of $K$ by Wall \cite{Wall}, Bieri--Eckmann \cite{BE} and Alonso \cite{Alonso}, among others. It is clear from the definition that $H^i(K,L;M)$ vanishes when $L=K$, and by a simple dimension-shifting argument it reduces to the ordinary group cohomology $H^i(K;M)$ when $L=\{1\}$ and $i\geq 2$. Another important property is the long exact sequence
\begin{equation}\label{eq:Takasuexact}
\resizebox{\textwidth}{!}{
\xymatrix{
\cdots \ar[r] & H^i(K,L;M) \ar[r] & H^i(K;M) \ar[r]^-{\operatorname{res}} & H^i(L;M|_L) \ar[r] & H^{i+1}(K,L;M) \ar[r] & \cdots
}}
\end{equation}
which holds for any coefficient module $M$. The existence of such a sequence can be deduced from the topological interpretation due to Takasu \cite{Takasu2}:
\[
H^i(K,L;M)\cong H^i(BK,BL;\mathcal{M}),
\]
where on the right-hand side we have the relative singular cohomology of a model for the classifying space $BK$ containing a model for $BL$ as a subspace, with coefficients in the local system $\mathcal{M}$ of abelian groups on $BK$ determined by $M$.  

\begin{defn}\label{def:Takasu_dim}
The \emph{Takasu relative cohomological dimension}, or simply the \emph{Takasu dimension}, of the pair $(K,L)$ is defined by
\[
\cld(K,L):=\sup\{n \mid H^n(K,L;M)\neq 0\mbox{ for some $K$-module }M\}.
\]
\end{defn}

\begin{lem}\label{lem:Tcd1}
The pair $(K,L)$ has $\cld(K,L)\le1$ if and only if $I(K/L)$ is a projective $K$-module.
\end{lem}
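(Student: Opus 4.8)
The plan is to unwind Definition~\ref{def:Takasu_dim} and reduce the statement to a standard characterisation of projective modules. First I would observe that, by Definition~\ref{def:Takasu}, for $n\ge 2$ we have $H^n(K,L;M)=\Ext^{n-1}_K(I(K/L),M)$, so that the inequality $\cld(K,L)\le 1$ is equivalent to the vanishing of $\Ext^j_K(I(K/L),M)$ for all $j\ge 1$ and all $K$-modules $M$. The degrees $n=0,1$ impose no condition: $H^0(K,L;M)=0$ by definition, and $H^1(K,L;M)=\Ext^0_K(I(K/L),M)=\Hom_K(I(K/L),M)$ is permitted to be nonzero.

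It then remains to show that $\Ext^j_K(I(K/L),M)=0$ for all $j\ge 1$ and all $M$ if and only if $I(K/L)$ is a projective $K$-module. The implication from projectivity is immediate, since projective modules are acyclic for $\Ext^{\ge 1}$. For the converse I would use only the case $j=1$: choose a short exact sequence $0\to N\to F\to I(K/L)\to 0$ with $F$ a free $K$-module; then the hypothesis $\Ext^1_K(I(K/L),N)=0$ forces this sequence to split, exhibiting $I(K/L)$ as a direct summand of the free module $F$, and hence as a projective $K$-module.

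I expect no real obstacle: the lemma is essentially a restatement of the definition combined with the elementary fact that $\Ext^1_K(P,-)\equiv 0$ characterises projectivity of $P$. The one point worth spelling out is that one need not verify vanishing of $\Ext^j$ in all degrees $j\ge 1$ separately — vanishing in degree $1$ already suffices, via the splitting argument above — but this is exactly what makes the equivalence clean.
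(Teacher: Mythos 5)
Your proof is correct and takes essentially the same route as the paper: the paper simply cites the "standard" identity $\cld(K,L)=\operatorname{pd}I(K/L)+1$, and your argument is precisely the unwinding of that identity via the dimension shift built into Definition~\ref{def:Takasu} together with the elementary $\Ext^1$-characterisation of projectivity.
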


\begin{proof} 
By standard arguments $\cld(K,L)=\operatorname{pd} I(K/L)+1$, where $\operatorname{pd}$ denotes the projective dimension of a $K$-module. 
\end{proof}

\subsection{Adamson cohomology}\label{ss:Adamson}

Next we turn to Adamson cohomology. Adamson's original definition \cite{Adamson} was in terms of a natural generalisation of the standard resolution built on the coset space $K/L$. Here we use the interpretation in terms of relative homological algebra due to Hochschild \cite{Hoch}, for which we need to recall some definitions. An epimorphism of $K$-modules $\phi:M\twoheadrightarrow N$ is called a \emph{$(K,L)$-epimorphism} if it splits when regarded as a map of $L$-modules; that is, if there exists an $L$-module map $\psi:N\to M$ such that $\phi\circ \psi =\operatorname{Id}_N$. A $K$-module $P$ is called \emph{$(K,L)$-projective} if the dotted lift exists in any diagram of $K$-modules as below, in which the horizontal arrow is a $(K,L)$-epimorphism:
\[
\xymatrix{
 & P \ar[d] \ar@{..>}[ld] \\
M \ar@{>>}[r] &  N
}
\]

\begin{ex}\label{ex:aug}
The augmentation $\epsilon:\Z(K/L)\twoheadrightarrow \Z$ is a $(K,L)$-epimorphism. Any induced module $\Z K\otimes_L A$ is $(K,L)$-projective (since induction is left adjoint to restriction). In particular, $\Z(K/L)\cong \Z K\otimes_L\Z$ is $(K,L)$-projective.
\end{ex}

An exact sequence consisting of $K$-module maps $\del_i:M_i\to M_{i-1}$ is called \emph{$(K,L)$-exact} if each short exact sequence
\[
\xymatrix{
0 \ar[r] & \operatorname{ker} \del_i \ar[r] & M_i \ar[r] & \operatorname{im} \del_i \ar[r] & 0
}
\]
splits when regarded as an exact sequence of $L$-modules. A \emph{$(K,L)$-projective resolution} of a $K$-module $M$ is then a $(K,L)$-exact sequence
\[
\xymatrix{
\cdots \ar[r] & P_2 \ar[r]^-{\del_2} & P_1 \ar[r]^-{\del_1} &  P_0 \ar[r]^-{\del_0} & M \ar[r] & 0
 }
\]
in which each $P_i$ is $(K,L)$-projective. With these definitions, one can construct the relative Ext functors $\Ext_{(K,L)}^i(-,-)$ in the usual way using $(K,L)$-projective resolutions, and show that they are independent of the chosen resolution \cite[\S 2]{Hoch}.

\begin{defn}\label{def:Adamson}
Let $M$ be a $K$-module. The \emph{Adamson relative cohomology} of the pair $(K,L)$ with coefficients in $M$ is 
\[
H^i([K:L];M):=\Ext^{i}_{(K,L)}(\Z,M).
\]  
\end{defn}

\begin{defn}\label{def:Adamson_dim}
The \emph{Adamson relative cohomological dimension}, or simply the \emph{Adamson dimension}, of the pair $(K,L)$ is defined by
\[
\cld([K:L]):=\sup\{n \mid H^n([K:L];M)\neq 0\mbox{ for some $K$-module }M\}.
\]
\end{defn}

\begin{lem}\label{lem:Acd1}
The pair $(K,L)$ has $\cld([K:L])\le1$ if and only if $I(K/L)$ is a $(K,L)$-projective module.
\end{lem}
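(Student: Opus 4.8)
The plan is to mimic the dimension-shifting argument used for Takasu cohomology (Lemma~\ref{lem:Tcd1}), but carried out in the relative homological algebra of $(K,L)$-modules rather than over $\Z K$. First I would record the short exact sequence of $K$-modules
\[
0 \longrightarrow I(K/L) \longrightarrow \Z(K/L) \stackrel{\epsilon}{\longrightarrow} \Z \longrightarrow 0,
\]
and observe that it is $(K,L)$-exact: by Example~\ref{ex:aug} the augmentation $\epsilon$ is a $(K,L)$-epimorphism, i.e.\ it splits over $L$, so the sequence is $L$-split. Moreover $\Z(K/L)$ is $(K,L)$-projective, again by Example~\ref{ex:aug}. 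Thus this is the start of a $(K,L)$-projective resolution of $\Z$, and splicing it with any $(K,L)$-projective resolution of $I(K/L)$ gives one of $\Z$.

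Next, using this sequence and the long exact sequence for $\Ext_{(K,L)}^{*}(-,M)$, I would dimension-shift to obtain
\[
\Ext^{i}_{(K,L)}(I(K/L),M) \;\cong\; \Ext^{i+1}_{(K,L)}(\Z,M) \;=\; H^{i+1}([K:L];M) \qquad (i\ge 1),
\]
the relevant terms $\Ext^{*}_{(K,L)}(\Z(K/L),M)$ vanishing in positive degrees since $\Z(K/L)$ is $(K,L)$-projective. Consequently $\cld([K:L]) = \operatorname{rpd} I(K/L) + 1$, where $\operatorname{rpd}$ denotes the relative (i.e.\ $(K,L)$-) projective dimension. Hence $\cld([K:L])\le 1$ iff $\operatorname{rpd} I(K/L) = 0$, i.e.\ iff $I(K/L)$ is $(K,L)$-projective.

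One technical point to handle carefully is the degree-$0$/degree-$1$ bookkeeping: the dimension-shift isomorphism only holds for $i\ge 1$, so I should separately note that $H^0([K:L];M) = \Z\text{-fixed-point-type invariants}$ is always potentially nonzero and does not interfere, and that the statement ``$\cld([K:L])\le 1$'' concerns vanishing of $H^n$ for $n\ge 2$, which is exactly governed by $\Ext^{n-1}_{(K,L)}(I(K/L),M)$ vanishing for $n-1\ge 1$. The main obstacle is thus not conceptual but making sure the relative Ext long exact sequence and the characterization of relative projective dimension via vanishing of relative Ext are invoked legitimately; these are standard facts from Hochschild's relative homological algebra \cite[\S 2]{Hoch}, and I would simply cite them. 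Everything else is the familiar short-exact-sequence manipulation, now internal to the exact category of $K$-modules with the $L$-split exact structure.
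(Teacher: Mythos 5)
Your proposal is correct and follows essentially the same route as the paper: use the $(K,L)$-exactness of the augmentation sequence together with the $(K,L)$-projectivity of $\Z(K/L)$ (both from Example~\ref{ex:aug}) and the long exact sequence of relative Ext groups to dimension-shift $H^i([K:L];M)\cong\Ext^{i-1}_{(K,L)}(I(K/L),M)$ for $i\ge 2$. The paper's proof is more terse and stops at the dimension-shift isomorphism, leaving the rest implicit, whereas you spell out the final translation to relative projective dimension and the degree bookkeeping; the content is the same.
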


\begin{proof} 
Since a short $(K,L)$-exact sequence of $K$-modules induces a long exact sequence of relative Ext groups \cite{Hoch}, Example \ref{ex:aug} and the augmentation sequence
\[
\xymatrix{
0 \ar[r] & I(K/L) \ar[r] & \Z(K/L) \ar[r]^-{\epsilon} & \Z \ar[r] & 0
}
\]
imply that for $i \geq 2$,
\[
H^i([K:L];M)\cong \Ext^{i-1}_{(K,L)}(I(K/L),M).
\]
\end{proof}

One of the most important features of Adamson cohomology is its relation to the ordinary group cohomology of the quotient when $L$ is normal in $K$. In that case, for any $K$-module $M$ we have an isomorphism
\[
H^i([K:L];M)\cong H^i(K/L; M^L),
\]
where $M^L\subseteq M$ denotes the submodule of $L$-invariants~\cite{Adamson}.
Hence $\cld([K:L])= \cld(K/L)$ when $L$ is normal in $K$. On the other hand, in the general case we do not have a long exact sequence such as (\ref{eq:Takasuexact}) in Adamson cohomology.

We can also take Adamson cohomology relative to families of subgroups, by generalising the above definitions in a straightforward manner. By a \emph{family} of subgroups of $K$ we mean a non-empty collection $\F$ of subgroups of $K$ which is closed under conjugation and taking subgroups. Important examples are $\F=\{1\},\FIN,\VCYC$, the families of trivial, finite or virtually cyclic subgroups, respectively, and these have been studied extensively in the literature. The most relevant example for us is the family generated by $L\leq K$: $$\L=\langle L \rangle := \{ H \leq K \mid H\leq kLk^{-1}\mbox{ for some }k\in K\},$$
consisting of all $K$-subconjugates of $L$. Given a family $\F$ of subgroups in $K$, we declare an epimorphism $M\twoheadrightarrow N$ of $K$-modules to be a $(K,\F)$-epimorphism if it splits as a map of $H$-modules, for every $H\in \F$. The definitions of $(K,\F)$-exact and $(K,\F)$-projective follow similarly.

\begin{defn}\label{def:AdamsonF}
Let $M$ be a $K$-module, and $\F$ a family of subgroups of $K$. The \emph{Adamson relative cohomology} of the pair $(K,\F)$, also known as the $\F$-relative cohomology of $K$, with coefficients in $M$ is 
\[
\F H^i(K;M):=\Ext^i_{(K,\F)}(\Z,M).
\]
\end{defn}
Here we have switched notation from the expected $H^i([K:\F];M)$ to be consistent with our references \cite{Nuc,PY}.

\begin{defn}\label{def:AdamsonF_dim}
The \emph{$\F$-relative cohomological dimension} of $K$ is defined by
\[
\F\cld(K):=\sup\{n \mid \F H^n(K;M)\neq 0\mbox{ for some $K$-module }M\}.
\]
\end{defn}

When $L\le K$ and $\L$ is the family generated by $L$, one checks easily that $\L H^i(K;M)\cong H^i([K:L];M)$ and $\L\cld(K)=\cld([K:L])$.

\subsection{Bredon cohomology}

Next we recall the basics of Bredon cohomology of a group with respect to a family of subgroups, for which a nice introductory reference is \cite{Fluch} and a more advanced reference is \cite{Lueck}. Given a family $\F$ of subgroups of $K$, the \emph{$\F$-restricted orbit category of $K$} is defined to be the category $\OF(K)$ whose objects are the transitive $K$-sets $K/H$ for $H\in \F$, and whose morphisms $K/H\to K/H'$ are $K$-equivariant maps. An \emph{$\OF(K)$-module} is a contravariant functor $\ul{M}:\OF(K)\to \mathsf{Ab}$ with values in abelian groups. For example, the constant module $\ul{\Z}$, which takes the value $\Z$ on every object and the identity $\operatorname{Id}:\Z\to\Z$ on every morphism, plays a special r\^ole. The category of $\OF(K)$-modules is an abelian category with enough projectives, with the notions of monomorphism and epimorphism, kernel and cokernel, etc.\ defined object-wise. Therefore we can define the Ext functors $\Ext^i_{\OF(K)}(-,-)$ in the standard way, and show that they are independent of the chosen resolution. 

\begin{defn}\label{def:Bredon}
Let $\ul{M}$ be an $\OF(K)$-module. The \emph{Bredon cohomology of $K$ with respect to the family $\F$} and with coefficients in $\ul{M}$ is defined to be
\[
H^i_\F(K;\ul{M}):=\Ext_{\OF(K)}^i(\ul{\Z},\ul{M}).
\]
In the special case where $\F=\L$ for a subgroup $L\leq K$, we call this the \emph{Bredon relative cohomology} of the pair $(K,L)$.
\end{defn}

\begin{defn}\label{def:Bredon_dim}
The \emph{Bredon cohomological dimension of $K$ with respect to the family $\F$} is
\[
\cld_\F(K):=\sup\{n \mid H^n_\F(K;\ul{M})\neq 0\mbox{ for some $\OF(K)$-module }\ul{M}\}.
\]
In the special case where $\F=\L$ for a subgroup $L\leq K$, we call $\cld_\L(K)$ the \emph{Bredon relative cohomological dimension}, or simply the \emph{Bredon dimension}, of the pair $(K,L)$.
\end{defn} 

Let $\Delta$ be a $K$-set. Define an $\OF(K)$-module $\ul{\Z}\Delta$ by setting $\ul{\Z}\Delta(K/H)=\Z(\Delta^H)$ for each $H\in \F$. There is a notion of freeness for $\OF(K)$-modules, and $\ul{\Z}\Delta$ is free if and only if all isotropy groups of $\Delta$ belong to the family $\F$ (see for example \cite[Ch.\ 1, \S 5-6]{Fluch}). An $\OF(K)$-module is projective if and only if it is a direct summand of a free module; in particular, free $\OF(K)$-modules are projective. 

The above observations permit a topological approach to constructing projective resolutions of $\ul{\Z}$. A $K$-CW complex $X$ is called a \emph{classifying space for $K$ with respect to the family $\F$} if for any $K$-CW complex $Y$ with isotropy in $\F$ there is a unique $K$-map $Y\to X$ up to $K$-homotopy. This is equivalent to the following conditions on the fixed point sets: for all $H\leq K$,
\begin{enumerate}[(i)]
\item $X^H =\emptyset$ if $H\notin\F$,
\item $X^H$ is contractible if $H\in \F$. 
\end{enumerate}
We denote such a classifying space by $E_\F(K)$, while noting that it is only unique up to $K$-homotopy type. Let $\Delta_i$ be the set of $i$-cells of $E_\F(K)$. Then $\ul{C}_i(E_\F(K)):=\ul{\Z}\Delta_i$ is the module of $i$-chains of $E_\F(K)$, and is free as an $\OF(K)$-module by condition (i). The augmented cellular chain complex
\[
\xymatrix{
\cdots \ar[r] & \ul{C}_2(E_\F(K)) \ar[r]^-{\del_2} & \ul{C}_1(E_\F(K)) \ar[r]^-{\del_1} & \ul{C}_0(E_\F(K)) \ar[r]^-{\del_0} & \ul{\Z} \ar[r] & 0
}
\]
is then a free, and hence projective, resolution of $\ul{\Z}$, by condition (ii).  Letting $\gld_\F(K)$ denote the minimal dimension of a $K$-CW complex model of $E_\F(K)$, it follows that $\cld_\F(K)\leq \gld_\F(K)$. L\"uck and Meintrup \cite{Lueck-Meintrup} have shown that $\gld_\F(K)\leq \max\{3,\cld_\F(K)\}$. In particular, we have
\[
\cld_\L(K)\leq \gld_\L(K)\leq\max\{3,\cld_\L(K)\}.
\]

To explain the relationship with Adamson cohomology, we recall that there is an adjoint pair of functors
\begin{equation}\label{eq:adjunct}
\xymatrix{
\OP{res}_{\{1\}}^\F: \OF(K){\text -}\textsf{mod} \ar@<0.5ex>[r] & K{\text -}\textsf{mod}:\OP{coind}_{\{1\}}^\F \ar@<0.5ex>[l]
}
\end{equation}
called \emph{restriction} and \emph{coinduction}. Explicitly,
\[
\OP{res}_{\{1\}}^\F(\ul{M})=\ul{M}(K/1),\qquad \OP{coind}_{\{1\}}^\F(M): K/H\mapsto M^H.
\]
The coinduction functor is also known as the \emph{fixed-points functor}. It is shown in  \cite{Nuc,PY,A-NC-MSS} that applying restriction to an $\OF(K)$-projective resolution
\[
\xymatrix{
\cdots \ar[r] & \ul{P}_2 \ar[r] & \ul{P}_1 \ar[r] & \ul{P}_0 \ar[r] & \ul{\Z} \ar[r] & 0
}
\]
yields a $(K,\F)$-projective resolution
\[
\xymatrix{
\cdots \ar[r] & \OP{res}_{\{1\}}^\F(\ul{P}_2) \ar[r] & \OP{res}_{\{1\}}^\F(\ul{P}_1) \ar[r] & \OP{res}_{\{1\}}^\F(\ul{P}_0) \ar[r] & \Z \ar[r] & 0.
}
\]
Combined with the adjunction (\ref{eq:adjunct}) this implies the following result (see also \cite[Theorem 3.10]{BC-VEB}).

\begin{prop}\label{prop:AleB}
Let $M$ be a $K$-module, and let $\F$ be a family of subgroups of~$K$. Then 
\[
\F H^i(K;M)\cong H^i_\F(K;\OP{coind}_{\{1\}}^\F(M)).
\]
In particular, $\F\cld(K)\leq \cld_\F(K)$ always. When $\L=\langle L\rangle$ is the family generated by a subgroup $L\le K$, we have $\cld([K:L])\leq\cld_\L(K)$.
\end{prop}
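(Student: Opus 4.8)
The plan is to assemble the two ingredients recalled immediately before the statement: the restriction--coinduction adjunction $(\ref{eq:adjunct})$, and the result of \cite{Nuc,PY,A-NC-MSS} that $\OP{res}_{\{1\}}^\F$ carries an $\OF(K)$-projective resolution of $\ul{\Z}$ to a $(K,\F)$-projective resolution of $\Z$. First I would fix an $\OF(K)$-projective resolution $\ul{P}_*\to\ul{\Z}$, for instance the augmented cellular chain complex of $E_\F(K)$ described above. Applying $\OP{res}_{\{1\}}^\F$ and using the cited result yields a $(K,\F)$-projective resolution $\OP{res}_{\{1\}}^\F(\ul{P}_*)\to\Z$ (note $\OP{res}_{\{1\}}^\F(\ul{\Z})=\ul{\Z}(K/1)=\Z$). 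By the definition of relative $\Ext$ and its independence of the chosen $(K,\F)$-projective resolution, $\F H^i(K;M)=\Ext^i_{(K,\F)}(\Z,M)$ is the $i$-th cohomology group of the cochain complex $\Hom_K\!\big(\OP{res}_{\{1\}}^\F(\ul{P}_*),M\big)$.

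Next I would invoke the adjunction $(\ref{eq:adjunct})$, which supplies isomorphisms
\[
\Hom_K\!\big(\OP{res}_{\{1\}}^\F(\ul{P}_i),M\big)\;\cong\;\Hom_{\OF(K)}\!\big(\ul{P}_i,\OP{coind}_{\{1\}}^\F(M)\big)
\]
natural in $\ul{P}_i$; naturality makes them commute with the differentials, so they assemble into an isomorphism of cochain complexes. Since $\ul{P}_*\to\ul{\Z}$ is a projective resolution of $\ul{\Z}$ in the category of $\OF(K)$-modules, the cohomology of the right-hand complex computes $\Ext^i_{\OF(K)}(\ul{\Z},\OP{coind}_{\{1\}}^\F(M))=H^i_\F(K;\OP{coind}_{\{1\}}^\F(M))$. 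Combining the last two displays gives the asserted isomorphism $\F H^i(K;M)\cong H^i_\F(K;\OP{coind}_{\{1\}}^\F(M))$.

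For the numerical consequences I would argue as follows. If $\cld_\F(K)=n<\infty$, then by the definition of Bredon dimension $H^i_\F(K;\ul{N})=0$ for every $\OF(K)$-module $\ul{N}$ and every $i>n$. Taking $\ul{N}=\OP{coind}_{\{1\}}^\F(M)$ and using the isomorphism just established shows $\F H^i(K;M)=0$ for every $K$-module $M$ and every $i>n$, whence $\F\cld(K)\le n=\cld_\F(K)$ (the case $\cld_\F(K)=\infty$ being vacuous). Finally, specialising to the family $\F=\L=\langle L\rangle$ generated by a subgroup $L\le K$ and using the identifications $\L H^i(K;M)\cong H^i([K:L];M)$ and $\L\cld(K)=\cld([K:L])$ recorded at the end of Section~\ref{ss:Adamson} gives $\cld([K:L])\le\cld_\L(K)$.

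I do not expect a genuine obstacle here; the mathematical content is entirely contained in the two recalled facts. The only point worth a sentence of verification is that the degreewise adjunction isomorphisms respect the boundary maps, so that one obtains an isomorphism of cochain complexes rather than merely a degreewise isomorphism of cochain groups; this is immediate from the naturality of the adjunction (equivalently, of its unit and counit).
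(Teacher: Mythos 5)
Your proof is correct and follows exactly the route the paper takes: applying $\operatorname{res}_{\{1\}}^\F$ to an $\OF(K)$-projective resolution of $\ul{\Z}$ to obtain a $(K,\F)$-projective resolution of $\Z$, then using the adjunction to identify the two $\Ext$ cochain complexes. The paper leaves this argument implicit in the paragraph preceding the proposition; you have simply written it out, including the (correct) observation that naturality of the adjunction makes the degreewise isomorphisms into a chain map.
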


For later use we record here Shapiro's Lemma for Bredon cohomology (see \cite[Chapter 3]{Fluch}) and for $\F$-relative cohomology. Let $\F$ be a family of subgroups of $K$, and let $P\leq K$ be a subgroup. Then $\F\cap P$ is a family of subgroups of $P$, and there is an adjoint pair of functors
\[
\xymatrix{
\OP{res}_{\F\cap P}^\F: \OF(K){\text -}\textsf{mod} \ar@<0.5ex>[r] & \mathcal{O}_{\F\cap P}(P){\text -}\textsf{mod}:\OP{coind}_{\F\cap P}^\F \ar@<0.5ex>[l]
}
\]
The restriction functor preserves projectives, implying the following result.

\begin{lem}[Shapiro's Lemma for Bredon cohomology]\label{lem:ShapiroBredon}
Let $\F$ be a family of subgroups of $K$, and let $P\leq K$ be a subgroup. Then for any $\mathcal{O}_{\F\cap P}(P)$-module $\ul{M}$ there are isomorphisms
\[
H^i_{\F\cap P}(P;\ul{M})\cong H^i_\F(K;\OP{coind}^\F_{\F\cap P}(\ul{M})),
\]
natural in $\ul{M}$. In particular, $\cld_{\F\cap P}(P)\leq \cld_\F(K)$.
\end{lem}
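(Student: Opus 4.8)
\textit{Proof proposal.} The plan is to run the usual Shapiro's Lemma argument, using the adjunction $\OP{res}_{\F\cap P}^\F\dashv\OP{coind}_{\F\cap P}^\F$ together with the two facts recorded just above: that $\OP{res}_{\F\cap P}^\F$ is exact (kernels and cokernels of modules over a restricted orbit category being computed object-wise) and that it preserves projectives. Write $\OP{res}$ for $\OP{res}_{\F\cap P}^\F$.

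First I would fix a projective resolution
\[
\cdots\longrightarrow\ul{P}_2\longrightarrow\ul{P}_1\longrightarrow\ul{P}_0\longrightarrow\ul{\Z}\longrightarrow0
\]
of the constant $\OF(K)$-module $\ul{\Z}$ and apply $\OP{res}$. Since $\OP{res}$ is precomposition with the evident functor $\mathcal{O}_{\F\cap P}(P)\to\OF(K)$, $P/H\mapsto K/H$, it carries the constant module $\ul{\Z}$ over $K$ to the constant module $\ul{\Z}$ over $P$; being exact and projective-preserving, it therefore converts the displayed sequence into a projective resolution
\[
\cdots\longrightarrow\OP{res}(\ul{P}_1)\longrightarrow\OP{res}(\ul{P}_0)\longrightarrow\ul{\Z}\longrightarrow0
\]
of the constant $\mathcal{O}_{\F\cap P}(P)$-module. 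This is the $P\le K$ analogue of the argument preceding Proposition \ref{prop:AleB}.

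Next I would compute $H^i_{\F\cap P}(P;\ul{M})$ from this resolution, as the $i$-th cohomology of the cochain complex $\Hom_{\mathcal{O}_{\F\cap P}(P)}\bigl(\OP{res}(\ul{P}_\bullet),\ul{M}\bigr)$, and then invoke the adjunction isomorphism
\[
\Hom_{\mathcal{O}_{\F\cap P}(P)}\bigl(\OP{res}(\ul{P}_n),\ul{M}\bigr)\;\cong\;\Hom_{\OF(K)}\bigl(\ul{P}_n,\OP{coind}_{\F\cap P}^\F(\ul{M})\bigr)
\]
in each degree $n$. Naturality of this isomorphism in the first variable makes it commute with the differentials, so it is an isomorphism of cochain complexes; naturality in the second variable gives compatibility with maps in $\ul{M}$. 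Passing to cohomology identifies $H^i_{\F\cap P}(P;\ul{M})$ with $\Ext^i_{\OF(K)}\bigl(\ul{\Z},\OP{coind}_{\F\cap P}^\F(\ul{M})\bigr)=H^i_\F\bigl(K;\OP{coind}_{\F\cap P}^\F(\ul{M})\bigr)$, as claimed, and naturally in $\ul{M}$. For the final inequality: if $\cld_\F(K)=n<\infty$ then $H^i_\F(K;-)$ vanishes in every degree $i>n$ on all $\OF(K)$-modules, in particular on each $\OP{coind}_{\F\cap P}^\F(\ul{M})$; by the isomorphism just proved $H^i_{\F\cap P}(P;\ul{M})=0$ for all $i>n$ and all $\ul{M}$, so $\cld_{\F\cap P}(P)\le n$, while if $\cld_\F(K)=\infty$ there is nothing to prove.

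The argument is entirely formal, so I do not anticipate a real obstacle; the only points deserving a line of justification are that $\OP{res}$ sends the constant module to the constant module and that it preserves projectives. The latter is the assertion made just before the lemma, and holds because $\OP{res}$ carries a free $\OF(K)$-module $\ul{\Z}\Delta$ to the module built the same way from $\Delta$ regarded as a $P$-set, whose $P$-isotropy groups — being subgroups of $P$ that are subconjugate to members of $\F$ — all lie in $\F\cap P$, so the image is again free, hence projective.
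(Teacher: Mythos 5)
Your argument is correct and is precisely the one the paper intends: the paper's "proof" consists only of the remark that restriction preserves projectives (having already set up the adjunction and noted that cohomology is computed object-wise), and you have simply spelled out the standard Shapiro argument that this remark encodes. The two justifications you flag at the end — that $\OP{res}$ carries $\ul{\Z}$ to $\ul{\Z}$ and carries a free module $\ul{\Z}\Delta$ to the free $\mathcal{O}_{\F\cap P}(P)$-module on $\Delta$ viewed as a $P$-set, whose isotropy lies in $\F\cap P$ because $\F$ is closed under conjugation and subgroups — are exactly the right points to check, and you have them right.
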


\begin{lem}[Shapiro's Lemma for $\F$-relative cohomology]\label{lem:ShapiroRel}
Let $\F$ be a family of subgroups of $K$, and let $P\leq K$ be a subgroup. Then for any $P$-module $M$ there are isomorphisms
\[
(\F\cap P)H^i(P;M) \cong \F H^i(K;\OP{coind}^K_P(M)),
\]
natural in $M$. In particular, $(\F\cap P)\cld(P)\leq \F \cld(K)$.
\end{lem}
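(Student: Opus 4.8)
The plan is to mirror the proof of Lemma \ref{lem:ShapiroBredon}, transporting the statement across the restriction–coinduction adjunction of Proposition \ref{prop:AleB}. First I would observe that by Proposition \ref{prop:AleB} applied to the group $P$ with the family $\F\cap P$, we have a natural isomorphism $(\F\cap P)H^i(P;M)\cong H^i_{\F\cap P}(P;\OP{coind}_{\{1\}}^{\F\cap P}(M))$, and applied to the group $K$ with the family $\F$, a natural isomorphism $\F H^i(K;N)\cong H^i_\F(K;\OP{coind}_{\{1\}}^\F(N))$ for any $K$-module $N$. Taking $N=\OP{coind}^K_P(M)$ reduces the claim to producing a natural isomorphism of $\OF(K)$-modules
\[
\OP{coind}_{\{1\}}^\F\bigl(\OP{coind}^K_P(M)\bigr)\;\cong\;\OP{coind}^\F_{\F\cap P}\bigl(\OP{coind}_{\{1\}}^{\F\cap P}(M)\bigr),
\]
together with invoking the Bredon Shapiro Lemma \ref{lem:ShapiroBredon} to identify the right-hand side's cohomology with $H^i_{\F\cap P}(P;\OP{coind}_{\{1\}}^{\F\cap P}(M))$.

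The key step is therefore to verify that the two composite coinduction functors agree. Both are right adjoints of the corresponding composite restriction functors: on the one side $\OP{res}^K_P\circ\OP{res}_{\{1\}}^\F$, on the other $\OP{res}_{\{1\}}^{\F\cap P}\circ\OP{res}^\F_{\F\cap P}$. A direct diagram chase shows these two composite restriction functors are naturally isomorphic — each sends an $\OF(K)$-module $\ul{N}$ to the underlying abelian group $\ul{N}(K/1)$ regarded as a $P$-module (once one checks that the $P$-action coming from $\Aut_{\mathcal{O}_{\F\cap P}(P)}(P/1)\cong P$ via $\OP{res}^\F_{\F\cap P}$ matches the restricted $K$-action). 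By uniqueness of adjoints, the right adjoints coincide up to canonical natural isomorphism. Unwinding the explicit formulas confirms this: for $H\in\F$, the left-hand module evaluates $K/H$ to $\bigl(\OP{coind}^K_P(M)\bigr)^H=\bigl(\Hom_{\Z P}(\Z K,M)\bigr)^H$, while the right-hand module evaluates $K/H$ to $\OP{coind}^{\F\cap P}_{\{1\}}(M)(\OP{res}^\F_{\F\cap P}(K/H))$, and both reduce to the same functor on $H\in\F\cap P$.

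The main obstacle I anticipate is bookkeeping with the restricted orbit categories: when $H\in\F$ but $H\not\leq P$ up to conjugacy, the object $K/H$ need not have a sensible "restriction" to $\mathcal{O}_{\F\cap P}(P)$, so one must be careful that $\OP{res}^\F_{\F\cap P}$ is defined via the correct left adjoint (induction of $P$-sets followed by decomposition into $K$-orbits in $\F$) rather than naively. Once the functoriality is set up correctly — which is exactly the content already used implicitly in stating Lemma \ref{lem:ShapiroBredon} — the argument is formal. The final sentence $(\F\cap P)\cld(P)\leq\F\cld(K)$ then follows immediately: any $P$-module $M$ with $(\F\cap P)H^n(P;M)\neq0$ yields a $K$-module $\OP{coind}^K_P(M)$ with $\F H^n(K;\OP{coind}^K_P(M))\neq0$, so the supremum defining the left side is bounded by that defining the right.
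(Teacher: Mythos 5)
Your proposal is correct and follows essentially the same route as the paper: the paper's proof is the one-line observation that the result follows from Proposition~\ref{prop:AleB} and Lemma~\ref{lem:ShapiroBredon} ``on noting that the various ways of coinducing up from a $P$-module to an $\OF(K)$-module coincide,'' and your argument simply spells out that coincidence via uniqueness of adjoints for the two composite restriction functors. The additional bookkeeping remark about how $\OP{res}^\F_{\F\cap P}$ is defined (via precomposition with $P/H\mapsto K/H$ on the orbit categories) is accurate and is exactly the point the paper elides.
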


\begin{proof}
 This follows from Proposition \ref{prop:AleB} and Lemma \ref{lem:ShapiroBredon}, on noting that the various ways of coinducing up from a $P$-module to an $\OF(K)$-module coincide.   
\end{proof}

Finally in this section, we note that if $L$ is normal in $K$ then $$\cld_\L(K)=\cld(K/L)=\cld([K:L]),$$ where the first equality was shown in \cite[Lemma 2.6]{GMP} and the second was noted in Subsection \ref{ss:Adamson} above.

\section{Equivariant group cohomology}\label{sec:equivdefns}

Let $\G$ be a discrete group. A \emph{$\G$-group} is a discrete group $G$ equipped with an action of $\G$ by automorphisms (equivalently, a homomorphism $\varphi:\G\to \operatorname{Aut} G$). In this section we discuss the various ways to define the equivariant group cohomology of a $\G$-group, each of which uses a different flavour of relative group cohomology applied to the pair $(\GG,\G)$. Here $\GG=G\rtimes_\varphi\G$ is the semi-direct product associated to the action, and $\G$ is identified with the standard subgroup $\{(1,\gamma) \mid\gamma\in\G\}$. 

\begin{defn}[\cite{CG-CO}]\label{def:CG-CO}
Let $M$ be a $\GG$-module. The \emph{Cegarra--Calcines--Ortega equivariant cohomology} of the $\G$-group $G$ with coefficients in $M$ is the Takasu cohomology $H^i(\GG,\G;M)$, see Definition \ref{def:Takasu}.

The \emph{Cegarra--Calcines--Ortega equivariant cohomological dimension}, or just \emph{Cegarra--Calcines--Ortega dimension}, of the $\G$-group $G$ is $\cld(\GG,\G)$.
\end{defn}

This is one of several equivalent definitions of equivariant group cohomology given by Cegarra, Garc\'ia-Calcines and Ortega in \cite{CG-CO}. They derive several properties of this cohomology theory, including a classification of equivariant extensions of $\G$-groups in terms of $H^2(\GG,\G;-)$. In developing this theory they were motivated by applications to graded categorical groups \cite{CG-CO2}, and did not discuss equivariant cohomological dimension.

\begin{defn}[\cite{In1,In2}]\label{def:Inassaridze}
Let $M$ be a $\GG$-module. The \emph{Inassaridze equivariant cohomology} of the $\G$-group $G$ with coefficients in $M$ is the Adamson cohomology
$H^i([\GG:\G];M)$, see Definition \ref{def:Adamson}.

The \emph{Inassaridze equivariant cohomological dimension}, or just \emph{Inassaridze dimension}, of the $\G$-group $G$ is $\cld([\GG:\G])$.
\end{defn} 

Motivated by applications to equivariant algebraic $K$-theory, Inassaridze defined and studied this variant of equivariant group cohomology in \cite{In1} (and more recently in \cite{In2}). Here $H^2([\GG:\G];-)$ may be used to classify equivariant extensions of $\G$-groups which are $\G$-split. Inassaridze also observes \cite[Corollary 5]{In1} that if $G$ is a free group with a basis that is permuted by $\G$, then $\cld([\GG:\G])\leq 1$. We will see in the next section that the converse does not hold. 

Now we let $\mathcal{G}=\langle \G\rangle$ denote the family of subgroups of $\GG$ generated by $\G$.

\begin{defn}[\cite{GMP}]\label{def:GMP}
Let $\ul{M}$ be an $\OGG$-module. The \emph{Bredon equivariant cohomology} of the $\G$-group $G$ with coefficients in $\ul{M}$ is
$H^i_\FG(\GG;\ul{M})$, see Definition \ref{def:Bredon}.

The \emph{Bredon equivariant cohomological dimension}, or just \emph{Bredon dimension}, of the $\G$-group $G$ is $\cld_\FG(\GG)$.
\end{defn}

\begin{cor}[to Proposition \ref{prop:AleB}]\label{cor:AleB}
For any $\G$-group $G$,
\[
\cld([\GG:\G])\leq \cld_\FG(\GG).
\]
\end{cor}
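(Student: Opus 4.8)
The plan is to obtain this as an immediate specialization of Proposition \ref{prop:AleB}. That proposition is stated for an arbitrary group $K$ and an arbitrary subgroup $L \le K$ with generated family $\L = \langle L \rangle$, and its final assertion is precisely the inequality $\cld([K:L]) \le \cld_\L(K)$. So the first and only substantive step is to set $K = \GG$ and $L = \G$, where $\G$ is identified with the standard subgroup $\{(1,\gamma)\mid\gamma\in\G\}$ of the semi-direct product, exactly as in the setup of Section \ref{sec:equivdefns}.

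With this choice, the family generated by $L = \G$ inside $K = \GG$ is by definition $\mathcal{G} = \langle\G\rangle$, matching the notation $\FG$ used in Definition \ref{def:GMP}. Substituting into the conclusion of Proposition \ref{prop:AleB} then yields $\cld([\GG:\G]) \le \cld_\FG(\GG)$. Finally I would remark that, by Definitions \ref{def:Inassaridze} and \ref{def:GMP}, the left-hand side is the Inassaridze dimension of the $\G$-group $G$ and the right-hand side is its Bredon dimension, so the inequality has the stated interpretation.

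There is no genuine obstacle here; the content is entirely contained in Proposition \ref{prop:AleB} (and hence in the comparison of $\F$-relative and Bredon resolutions via the restriction/coinduction adjunction recalled before it), and the corollary is just the translation of that result into the equivariant-group-cohomology dictionary. The only thing worth double-checking is that the family generated by the subgroup $\G \le \GG$ is indeed the family $\FG$ appearing in Definition \ref{def:GMP}, which is immediate from the definition of $\langle L\rangle$ given in Subsection \ref{ss:Adamson}.
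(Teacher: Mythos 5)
Your proposal is correct and is exactly the paper's intended argument: Corollary~\ref{cor:AleB} is simply Proposition~\ref{prop:AleB} with $K=\GG$, $L=\G$, and $\L=\langle\G\rangle=\FG$, and the paper accordingly offers no separate proof. Nothing further is needed.
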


If $\G$ acts trivially on $G$, then $\G\lhd\GG$ is normal with quotient $G$ and $$\cld([\GG:\G])=\cld_\FG(\GG)=\cld(G).$$

\section{Comparison of Inassaridze and Bredon dimensions}\label{sec:AvsB}

Our goal in this section is to give examples of $\G$-groups for which the inequality between Inassaridze and Bredon dimensions given in Corollary \ref{cor:AleB} is strict, thus affirming a conjecture made by Inassaridze \cite[\S 8]{In2}. 

We first note that it is relatively straightforward to find examples of finite groups $\G$ with families $\F$ such that $\F\cld(\G)<\cld_\F(\G)$. The genesis of these examples is the following simple algebraic lemma.

\begin{lem}\label{lem:genesis}
Let $\G$ be a finite group. An epimorphism $\pi:M\to N$ of $\G$-modules splits as a map of $\G$-modules if and only if it splits as a map of $P$-modules for every $p$-subgroup $P\leq \G$ and all primes $p$.
\end{lem}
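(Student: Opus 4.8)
The plan is to reduce this to the classical transfer argument in group cohomology, using the fact that $\Ext^1$ classifies extensions and that the relative cohomology $H^1$ detects splitting. First I would observe that the ``only if'' direction is trivial: a $\G$-module splitting restricts to a $P$-module splitting for every subgroup $P\le\G$, a fortiori for every $p$-subgroup. So the content is the ``if'' direction.

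For the ``if'' direction, I would work one prime at a time. Let $K=\ker\pi$, so we have a short exact sequence $0\to K\to M\xrightarrow{\pi} N\to 0$ of $\Z\G$-modules, representing a class $\xi\in\Ext^1_{\Z\G}(N,K)$; the sequence splits as $\G$-modules if and only if $\xi=0$. Fix a prime $p$ and let $P\le\G$ be a Sylow $p$-subgroup. By hypothesis the restricted sequence splits over $P$, i.e. $\res^\G_P(\xi)=0$ in $\Ext^1_{\Z P}(N,K)$. The key step is the standard transfer/restriction composite: there is a transfer map $\mathrm{tr}^\G_P:\Ext^1_{\Z P}(N,K)\to\Ext^1_{\Z\G}(N,K)$ with $\mathrm{tr}^\G_P\circ\res^\G_P = [\G:P]\cdot\mathrm{id}$. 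Hence $[\G:P]\cdot\xi = \mathrm{tr}^\G_P(\res^\G_P(\xi)) = \mathrm{tr}^\G_P(0) = 0$. Since $[\G:P]$ is coprime to $p$, this shows that $\xi$ is killed by an integer prime to $p$; equivalently, the $p$-primary component of the class $\xi$ vanishes. Running this over all primes $p$ dividing $|\G|$ — noting $\Ext^1_{\Z\G}(N,K)$ is $|\G|$-torsion, again by the transfer applied to the trivial subgroup — we conclude $\xi=0$, so $\pi$ splits over $\G$.

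One technical point worth spelling out: $\Ext^1_{\Z\G}(N,K)$ need not be finitely generated, but it is always annihilated by $|\G|$ (apply $\mathrm{tr}^\G_{\{1\}}\circ\res^\G_{\{1\}}=|\G|\cdot\mathrm{id}$, and $\res^\G_{\{1\}}(\xi)=0$ automatically since every extension of $\Z$-modules splits, or more precisely since $\Ext^1$ over $\Z$ of the relevant modules could be nonzero — so instead one uses that $|\G|\cdot\xi$ lies in the image of the transfer from the trivial group, hence is a sum of ``induced'' classes, which are zero). Actually the cleanest formulation is: an abelian group annihilated by $|\G|$ decomposes as the direct sum of its $p$-primary parts over $p\mid|\G|$, and we have shown each $p$-primary part of $\xi$ is zero. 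I would phrase the argument in exactly this way to avoid any finiteness hypotheses.

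The main obstacle — really the only thing requiring care — is setting up the transfer map on $\Ext$-groups correctly and verifying the identity $\mathrm{tr}\circ\res=[\G:P]$ in this generality, i.e. for $\Ext^1$ with both variables arbitrary $\Z\G$-modules rather than for ordinary cohomology $H^*(\G;-)=\Ext^*_{\Z\G}(\Z,-)$. This is standard (it follows from the existence of transfer/corestriction for the functors $\Ext^*_{\Z\G}(N,-)$, which are the derived functors of $\Hom_{\Z\G}(N,-)$, combined with the Eckmann--Shapiro relationship between induction and coinduction over the finite-index subgroup $P$), but it is the step one must not gloss over. Everything else is formal manipulation with torsion abelian groups and their primary decompositions.
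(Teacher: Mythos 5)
Your approach is correct in spirit but takes a genuinely different and more homological route than the paper, and one step is stated imprecisely. The paper avoids $\Ext$-groups entirely: given $P_i$-splittings $s_i\colon N\to M$ for Sylow subgroups $P_1,\ldots,P_k$, it forms the averaged maps $t_i=\sum_{\gamma\in\G/P_i}\gamma s_i\gamma^{-1}$, checks these are $\G$-equivariant and satisfy $\pi t_i=[\G:P_i]\cdot\mathrm{id}_N$, and then uses Bezout on the coprime indices $[\G:P_i]$ to produce the $\G$-splitting $\sum a_i t_i$ directly. Your proposal is the $\Ext^1$-shadow of exactly this construction — the averaging map $t_i$ \emph{is} the transfer in degree zero — so the underlying arithmetic is the same, but the paper's version is entirely elementary and self-contained, whereas yours requires setting up transfer and restriction on $\Ext^1_{\Z\G}(N,-)$ for a non-trivial $N$, a point you rightly flag as the one needing care.

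The imprecision: $\Ext^1_{\Z\G}(N,K)$ is \emph{not} annihilated by $|\G|$ in general (e.g.\ for $\G=C_2$ acting trivially, $\Ext^1_{\Z C_2}(\Z/3,\Z)\cong\Z/3$), and the parenthetical remark that transferred classes from the trivial subgroup ``are zero'' is not correct either. Fortunately your argument does not need this global torsion statement. What you actually prove is that the particular class $\xi$ is annihilated by each index $[\G:P_i]$; since $\gcd\big([\G:P_1],\ldots,[\G:P_k]\big)=1$, Bezout gives integers $a_i$ with $\sum a_i[\G:P_i]=1$, whence $\xi=\sum a_i[\G:P_i]\xi=0$. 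That is precisely the arithmetic the paper uses. So the primary-decomposition detour, and the claim about $|\G|$-torsion of the whole $\Ext$-group, should be dropped; stating Bezout directly on the indices both fixes the gap and shortens the argument.
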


\begin{proof}
The only if direction is clear. Let $p_1,\ldots ,p_k$ be the primes dividing $|\G|$, and fix Sylow subgroups $P_1,\ldots ,P_k$. Suppose we are given, for each $i=1,\ldots , k$, a $P_i$-splitting $s_i:N\to M$ of $\pi$. We define a morphism of abelian groups $t_i:N\to M$ by setting 
\[
t_i=\sum_{\gamma\in\G/P_i} \gamma s_i \gamma^{-1},
\]
where the sum runs over a set of coset representatives of $P_i$ in $\G$. Since $s_i$ is a morphism of $P_i$-modules, this is independent of the coset representatives chosen. We claim that $t_i$ is in fact a morphism of $\G$-modules. For, given $\sigma\in \G$, we have
\begin{align*}
t_i \sigma & = \sum_{\gamma\in \G/P_i} \gamma s_i \gamma^{-1}\sigma \\
               & = \sum_{\gamma\in \G/P_i} \sigma(\sigma^{-1}\gamma) s_i (\sigma^{-1}\gamma)^{-1} \\
		  & = \sigma \sum_{\sigma^{-1}\gamma\in \G/P_i}(\sigma^{-1}\gamma) s_i (\sigma^{-1}\gamma)^{-1} \\
              & = \sigma t_i
\end{align*}
where the last equality follows from the independence of coset representatives mentioned above.

Observe that $\pi t_i:N\to N$ is multiplication by $|\G/P_i|$. Since the integers $|\G/P_1|,\ldots , |\G/P_k|$ are relatively prime, there exist integers $a_1,\ldots , a_k$ such that $\sum a_i|\G/P_i|=1$. Then $\sum a_it_i$ is a $\G$-splitting of $\pi$.
\end{proof}

\begin{cor}\label{cor:cd0}
Let $\F$ be a family of proper subgroups of the finite group $\G$ which contains all of the $p$-subgroups. Then $\F\cld(\G)=0$, while $\cld_\F(\G)\ge2$.
\end{cor}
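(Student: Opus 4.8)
The plan is to establish the two assertions separately, starting from the hypothesis that $\F$ contains every $p$-subgroup of $\G$ for every prime $p$ and consists of proper subgroups.

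First I would show $\F\cld(\G) = 0$. Since $\F$ is non-empty and closed under subgroups it contains the trivial subgroup, so $\F\cld(\G)\geq 0$ for any coefficient module (already the degree-zero relative cohomology can be nonzero), and it suffices to prove $\F H^i(\G;M)=0$ for all $i\geq 1$ and all $\G$-modules $M$. For this I claim that \emph{every} epimorphism $\pi:P\twoheadrightarrow N$ of $\G$-modules is a $(\G,\F)$-epimorphism. Indeed, by Lemma \ref{lem:genesis} an epimorphism of $\G$-modules splits as a map of $\G$-modules if and only if it splits as a map of $P$-modules for every $p$-subgroup $P\leq\G$; since $\F$ contains all such $p$-subgroups, splitting over every member of $\F$ is implied by (indeed equivalent to) splitting over $\G$ itself, and every $\G$-module epimorphism certainly splits over $\G$ (viewed as a self-map along the subgroup $\G\in\{$all subgroups$\}$)—more precisely, the identity subgroup argument: every $\G$-module epimorphism splits as a $\G$-map iff it does, trivially, so once we know $\F$-splitting $=$ $\G$-splitting, we must be slightly careful. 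The correct statement is: a $\G$-module epimorphism is a $(\G,\F)$-epimorphism iff it splits over each $p$-subgroup, and \emph{that} need not always hold. So instead I would argue: the relative cohomology $\F H^*(\G;-)$ is computed from a $(\G,\F)$-projective resolution of $\Z$; by Lemma \ref{lem:genesis}, the class of $(\G,\F)$-epimorphisms coincides with the class of epimorphisms that split as $P$-maps for all $p$-subgroups $P$, and for such a resolution the comparison with the \emph{all-subgroups} family $\mathcal{ALL}$ shows $\F H^i(\G;M)\cong \mathcal{ALL}H^i(\G;M)$, which vanishes for $i\geq 1$ because over the full family every epimorphism splits (so $\Z$ is relatively projective). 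Concretely: the short exact sequence $0\to I(\G/1)\to\Z(\G)\to\Z\to 0$ is $(\G,\F)$-split by Lemma \ref{lem:genesis} (it splits over every $p$-subgroup since $\Z(\G)$ is free hence projective over any subgroup... actually one needs that the augmentation $\Z\G\to\Z$ splits over a $p$-group, which it does not in general).

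Let me restructure: the clean route is to observe that $\F$ contains all $p$-subgroups for all $p$, so by Lemma \ref{lem:genesis}, an epimorphism of $\G$-modules is a $(\G,\F)$-epimorphism precisely when it is a $(\G,\mathcal{ALL})$-epimorphism, i.e. an absolute epimorphism of $\Z\G$-modules—no, Lemma \ref{lem:genesis} says $\G$-split iff $p$-split for all $p$, so $\F$-epi $\Leftrightarrow$ $\G$-split. Hence the $(\G,\F)$-projective resolutions of $\Z$ are the $\G$-split resolutions, and since $0\to 0\to\Z\xrightarrow{=}\Z\to 0$ is such a resolution (the identity is $\G$-split), $\Z$ is itself $(\G,\F)$-projective, giving $\F H^i(\G;M)=0$ for $i\geq 1$ and $\F\cld(\G)=0$. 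I expect this identification of the relative class of epimorphisms to be the crux and the only subtle point; everything else is formal.

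For the second assertion $\cld_\F(\G)\geq 2$: since $\F$ consists of proper subgroups, $\G\notin\F$, so the classifying space $E_\F(\G)$ has empty $\G$-fixed set, hence $\gld_\F(\G)\geq 1$, and a zero-dimensional model would force $\G\in\F$; so $\gld_\F(\G)\geq 1$, whence (using $\cld_\F\leq\gld_\F\leq\max\{3,\cld_\F\}$) we get $\cld_\F(\G)\geq 1$. To push to $\geq 2$, suppose for contradiction $\cld_\F(\G)\leq 1$. Then $\gld_\F(\G)\leq 3$ is not tight enough, so instead I would use a direct cohomological computation: take the constant module $\ul{\Z}$ and a nontrivial coefficient module built from a $\G$-module $M$ via $\OP{coind}^\F_{\{1\}}(M)$; by Proposition \ref{prop:AleB}, $H^i_\F(\G;\OP{coind}^\F_{\{1\}}(M))\cong \F H^i(\G;M)=0$ for $i\geq 1$, which is consistent and unhelpful. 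The better approach is the Mayer--Vietoris / Euler characteristic obstruction: if $\cld_\F(\G)\leq 1$ then $\ul{\Z}$ has a length-one projective $\OF(\G)$-resolution $0\to\ul{P}_1\to\ul{P}_0\to\ul{\Z}\to 0$; evaluating at $\G/1$ (restriction) gives a length-one projective $\Z\G$-resolution of $\Z$, so $\cld(\G)\leq 1$, forcing $\G$ free by Stallings--Swan, and since $\G$ is finite, $\G=1$—contradicting that $\F$ consists of \emph{proper} subgroups of a nontrivial group (and if $\G=1$ the statement is vacuous as there are no proper subgroups, so the hypothesis forces $\G\neq 1$). Hence $\cld_\F(\G)\geq 2$. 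The main obstacle I anticipate is making the first part airtight: correctly pinning down that ``$(\G,\F)$-split $=$ $\G$-split'' via Lemma \ref{lem:genesis}, and from there concluding $\Z$ is $(\G,\F)$-projective.
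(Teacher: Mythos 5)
Your argument for the first claim $\F\cld(\G)=0$ is, after several false starts, essentially the paper's: Lemma~\ref{lem:genesis} shows a $(\G,\F)$-epimorphism, which by definition splits over every member of $\F$ and in particular over every $p$-subgroup of $\G$, must be $\G$-split; therefore every $\G$-module, and in particular $\Z$, is $(\G,\F)$-projective, so $\F\cld(\G)=0$. (Presenting the resolution $0\to\Z\xrightarrow{=}\Z\to 0$ as the \emph{reason} $\Z$ is $(\G,\F)$-projective is circular phrasing — its being a $(\G,\F)$-projective resolution is a consequence, not a cause — but the substance is correct.)

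Your argument for the second claim $\cld_\F(\G)\geq 2$ has a genuine gap. You assert that evaluating a length-one projective $\OF(\G)$-resolution $0\to\ul{P}_1\to\ul{P}_0\to\ul{\Z}\to 0$ at the orbit $\G/1$ yields a length-one \emph{projective} $\Z\G$-resolution of $\Z$, hence $\cld(\G)\leq1$, hence $\G$ is free, hence trivial. This fails at the first step: the restriction functor $\OP{res}_{\{1\}}^\F$ (evaluation at $\G/1$) does not preserve absolute projectivity. A free $\OF(\G)$-module on the orbit $\G/H$ evaluates at $\G/1$ to the permutation module $\Z(\G/H)$, which is projective over $\Z\G$ only when $H=\{1\}$. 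As recorded in Section~\ref{sec:defns}, what restriction produces from an $\OF(\G)$-projective resolution is a $(\G,\F)$-projective resolution, so your argument only gives $\F\cld(\G)\leq 1$ — which you already know (it is $0$) — and there is no contradiction. Moreover your conclusion would show $\cld_\F(\G)=\infty$ for every nontrivial finite $\G$ and every family of proper subgroups, which is false: Example~\ref{ex:A5} gives $\cld_{\mathcal{P}}(A_5)=2$ for $\mathcal{P}$ the family of proper subgroups of $A_5$. The paper instead quotes \cite[Theorem 1.6]{GMP} for $\cld_\F(\G)\geq 2$; this is a separate theorem about Bredon dimensions of finite groups, not a formal consequence of Stallings--Swan via restriction to the free orbit.
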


\begin{proof}
By Lemma \ref{lem:genesis} any $(\G,\F)$-epimorphism $M\to N$ is $\G$-split. It follows that every $\G$-module is $(\G,\F)$-projective. Hence $\F\cld(\G)=0$. The second claim is \cite[Theorem 1.6]{GMP}. (We note that $\cld_\F(\G)\ge 1$ is an easy consequence of $\G\notin\F$.)
\end{proof}

Now we leverage this argument to give examples of free $\G$-groups $G$ for which $\cld([\GG:\G])=1$ and $\cld_\FG(\GG)>1$.

\begin{defn}\label{def:Gammafree}
A $\G$-group $G$ is called \emph{$\G$-free} if $G$ is a free group with a basis permuted by $\G$.
\end{defn}

\begin{thm}[\cite{GMP}]\label{thm:ESS}
Let $G$ be a $\G$-group with $\G$ finite. Then $\cld_\FG(\GG)\leq 1$ if and only if $G$ is $\G$-free.
\end{thm}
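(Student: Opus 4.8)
The plan is to run both implications through the geometric statement ``$\GG$ admits a $1$-dimensional classifying space $E_\FG(\GG)$'', i.e. a tree $T$ on which $\GG$ acts with every cell stabiliser lying in $\mathcal{G}=\langle\G\rangle$; such a $\GG$-tree is essentially the same data as a $\G$-free structure on $G$. For the ``if'' direction I would exhibit the tree directly, obtaining $\gld_\FG(\GG)\le1$ and a fortiori $\cld_\FG(\GG)\le1$. For the ``only if'' direction the real work is to \emph{deduce} the existence of such a tree from $\cld_\FG(\GG)\le1$ --- an equivariant Stallings--Swan phenomenon --- and then to read off a $\G$-set basis of $G$ from it.

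\emph{The ``if'' direction.} Suppose $G$ is free with basis a $\G$-set $S$. Take $T$ to be the Cayley tree of $G$ with respect to $S$: its vertices are the elements of $G$, its edges the pairs $\{g,gs\}$ with $g\in G$, $s\in S$, and this is a tree precisely because $S$ is a free basis. Since $\G$ permutes $S$ it preserves the edge set, so the left-translation action of $G$ extends to a $\GG$-action on $T$. A short computation identifies the stabiliser of a vertex $g$ with the conjugate $(g,1)\,\G\,(g,1)^{-1}$, and the stabiliser of an edge $\{g,gs\}$ with the subgroup $\{(g\gamma(g)^{-1},\gamma)\mid\gamma\in\G_s\}$ of it, where $\G_s\le\G$ is the stabiliser of $s$; in particular there are no inversions and all cell stabilisers lie in $\mathcal{G}$. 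As $T$ is a tree, $T^H$ is a subtree (possibly empty) for every $H\le\GG$, hence contractible when nonempty, and $T^H\ne\emptyset$ exactly when $H$ fixes a vertex, i.e. when $H$ is subconjugate to $\G$, i.e. when $H\in\mathcal{G}$. Thus $T$ is a model for $E_\FG(\GG)$, so $\gld_\FG(\GG)\le1$ and therefore $\cld_\FG(\GG)\le1$.

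\emph{The ``only if'' direction.} Assume $\cld_\FG(\GG)\le1$. First, $G\le\GG$ meets every conjugate of $\G$ trivially (such a conjugate maps isomorphically onto $\G$ under $\GG\twoheadrightarrow\G$, whose kernel is $G$), so $\mathcal{G}\cap G$ is the trivial family $\{1\}$; hence by Shapiro's Lemma for Bredon cohomology (\lemref{lem:ShapiroBredon}), $\cld(G)=\cld_{\{1\}}(G)\le\cld_\FG(\GG)\le1$, and $G$ is free by Stallings--Swan. The crux is then to produce a $\G$-invariant basis. I would first prove the key fact that \emph{every finite subgroup of $\GG$ lies in $\mathcal{G}$}: if $Q\le\GG$ were finite and of minimal order with $Q\notin\mathcal{G}$, then $\mathcal{G}\cap Q$ would be the family of all proper subgroups of $Q$, and restricting along $Q\hookrightarrow\GG$ (\lemref{lem:ShapiroBredon}) together with \corref{cor:cd0} --- and, when $Q$ is a $p$-group, with the infinitude of $\cld$ of a nontrivial finite group and the identity $\cld_\L(K)=\cld(K/L)$ for normal $L$ --- would force $\cld_\FG(\GG)\ge2$, a contradiction. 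Granting this, $\GG$ is virtually free (it is free-by-finite), hence acts on a tree $T$ with finite stabilisers; by the key fact these all lie in $\mathcal{G}$, and since $\mathcal{G}$ is subgroup-closed and finite subgroups of $\GG$ fix vertices of a tree, one checks immediately that $T$ is a model for $E_\FG(\GG)$, so $\gld_\FG(\GG)\le1$. Restricting the $\GG$-action on $T$ to $G$ yields a \emph{free} $G$-action, so $Y:=T/G$ is a graph with $\pi_1(Y)\cong G$ carrying an induced $\G$-action; after barycentric subdivision (which keeps $T$ a model for $E_\FG(\GG)$) this action has no edge-inversions and fixes the image $y_0$ of a $\G$-fixed vertex of $T$. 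A $\G$-equivariant analysis of $Y$ --- exploiting the stabiliser constraint to choose a $\G$-invariant spanning tree --- then exhibits $G=\pi_1(Y,y_0)$ as free on the $\G$-set of complementary edges.

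The main obstacle lies in the converse. One delicate point is the passage from the cohomological bound $\cld_\FG(\GG)\le1$ to the geometric bound $\gld_\FG(\GG)\le1$: this is an equivariant form of the Stallings--Swan/Dunwoody phenomenon (``Eilenberg--Ganea in dimension one''), already subtle non-equivariantly, and it needs care when $G$ has infinite rank, so that $\GG$ need not be finitely generated --- which is why I route it through the ``finite subgroups lie in $\mathcal{G}$'' lemma rather than invoking accessibility directly. The other delicate point is the final combinatorial step of reading a $\G$-\emph{set} basis of $G$ off the quotient graph $Y$, where the subdivision points and the behaviour of $\G$ on subtrees must be controlled; this is exactly the place where ``free with $\G$ acting'' gets upgraded to ``free on a $\G$-set''. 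As a sanity check that the distinction is real: for $G=\Z$ with $\G=\Z/2$ acting by inversion, $G$ is not $\G$-free, and correspondingly $\GG=D_\infty$ carries no tree with all reflection stabilisers in the single conjugacy class $\mathcal{G}\setminus\{1\}$, so $\cld_\FG(\GG)>1$.
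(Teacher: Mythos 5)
The theorem you are proving is quoted in the paper as a citation to \cite{GMP} rather than proved there; what the paper does contain is a sketch of the GMP argument inside the proof of \lemref{lem:nonabH1}. Your outline follows that argument closely in structure, so this is essentially the paper's route. The ``if'' direction via the Cayley tree of $(G,S)$, with the verification that stabilisers lie in $\mathcal{G}$ and that $T^H$ is nonempty precisely for $H\in\mathcal{G}$, is complete and correct. For the ``only if'' direction, your identification of the key lemma --- that every finite subgroup of $\GG$ lies in $\mathcal{G}$ --- is exactly the right pivot (it is the translation into subgroup language of the non-abelian-cohomology condition $H^1(P;G)=\{1\}$ for all $P\le\G$ that drives the GMP proof and appears in \lemref{lem:nonabH1}). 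Your proof of that lemma via \lemref{lem:ShapiroBredon} and \corref{cor:cd0}, with a separate normal-quotient argument when the minimal counterexample is a $p$-group (mirroring the use of \cite[Lemma 5.2]{GMP} in \exref{ex:C6}), is sound and is the argument the paper itself deploys in its examples. The passage from ``all finite subgroups lie in $\mathcal{G}$'' through the Karrass--Pietrowski--Solitar/Cohen/Scott tree to a model of $E_\FG(\GG)$ is also correct.

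The gap, which you flag but do not close, is the final combinatorial step: extracting a $\G$-invariant spanning tree of $Y=T/G$ and concluding that $G$ is free on a $\G$-set. This is precisely where the argument has real content and where naive hope fails: a priori, contracting a $\G$-invariant spanning tree gives a $\G$-CW wedge of circles, but a nontrivial $\gamma$ could fix a loop while reversing it, sending a basis element to its inverse, so ``$\G$ acts on a free group'' does not automatically yield a permuted basis (your sanity check $G=\Z$, $\G=\Z/2$ inverting, is exactly this failure mode). The paper's sketch resolves it by invoking the Kano--Sakamoto criterion --- a $\G$-invariant spanning tree of $Y$ exists if and only if each fixed subgraph $Y^{\G_x}$ is connected --- and then verifies this connectivity via the non-abelian cohomology vanishing $H^1(\G_x;G)=\{1\}$, which in your framework is a consequence of the ``all finite subgroups lie in $\mathcal{G}$'' lemma. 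Without this ingredient (or an equivalent one ruling out the inversion pathology), the upgrade from ``free'' to ``free on a $\G$-set'' does not follow, so you should either cite Kano--Sakamoto and run the $H^1$ verification, or supply an alternative argument that handles edge-reversals.
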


Whether a free $\G$-group is $\G$-free can be detected using non-abelian cohomology. We briefly review the definitions, referring the reader to \cite[I.\S 5]{Serre} for more details. Given a $\G$-group $G$, a \emph{$1$-cocycle} is a map $c:\G\to G$ satisfying $c(\gamma\delta)=c(\gamma){}^\gamma c(\delta)$ for all $\gamma,\delta\in\G$. Two $1$-cocycles $c_1,c_2:\G\to G$ are said to be \emph{equivalent} if there exists $g\in G$ such that $c_1(\gamma)=g^{-1}c_2(\gamma){}^\gamma g$ for all $\gamma\in\G$. This is indeed an equivalence relation, and the set of equivalence classes is denoted $H^1(\G;G)$ and called the \emph{first non-abelian cohomology of $\G$ with coefficients in $G$}. It is a pointed set, with base point given by the class of the trivial $1$-cocycle, which is constant at the identity element of $G$. Any $1$-cocycle in this class is called \emph{principal}. If every $1$-cocycle is principal, we say $H^1(\G;G)$ is trivial and write $H^1(\G;G)=\{1\}$; otherwise we say $H^1(\G;G)$ is non-trivial and write $H^1(\G;G)\neq\{1\}$. 

Note that a $1$-cocycle $c:\G\to G$ determines a subgroup $\tilde{\G}=\{(c(\gamma),\gamma)\mid \gamma\in \G\}$ of $\GG$ which projects isomorphically to $\G$. It is well known that $H^1(\G;G)$ is in bijection with the set of $G$-conjugacy classes of such subgroups of $\GG$ (see for example \cite[p.43]{Serre}).
  
\begin{lem}\label{lem:nonabH1}
Let $G$ be a free $\G$-group with $\G$ finite. Then $G$ is $\G$-free if and only if the first non-abelian cohomology $H^1(P;G)$ is trivial for every subgroup $P\le \G$.
\end{lem}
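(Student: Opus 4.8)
The plan is to prove both implications by relating the non-abelian cohomology sets $H^1(P;G)$ to the structure of $G$ as a $P$-set, using the classical fact that over a finite group the free subgroups of a semidirect product that project isomorphically onto the quotient are classified by $H^1(P;G)$, together with the observation (from \lemref{lem:nonabH1}'s hypotheses) that $G$ is already free.

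\medskip

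\textbf{The ``only if'' direction.} Suppose $G$ is $\G$-free, so $G=F(S)$ for some basis $S$ permuted by $\G$. Then for every subgroup $P\le \G$, the set $S$ is also a $P$-set, so $G$ is a $P$-free $P$-group. I would then invoke the known vanishing of non-abelian cohomology for free groups on permutation bases: explicitly, if $c\colon P\to G$ is a $1$-cocycle, the twisted subgroup $\widetilde{P}=\{(c(p),p)\mid p\in P\}\le G\rtimes P$ is finite (since $P$ is finite and this group surjects onto $P$ with trivial intersection with $G$, which is torsion-free), hence acts on the tree (Bass--Serre/Cayley graph) of $G=F(S)$ with a global fixed point; that fixed point produces $g\in G$ conjugating $\widetilde P$ into the standard copy of $P$, i.e.\ $c$ is principal. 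So $H^1(P;G)=\{1\}$ for all $P\le\G$. (Alternatively one cites \cite{Serre} I.\S5 directly, or Inassaridze's \cite[Corollary 5]{In1} applied to each $P$.)

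\medskip

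\textbf{The ``if'' direction.} This is the substantive part. Assume $H^1(P;G)=\{1\}$ for every $P\le\G$ and $G$ is free; we must build a $\G$-invariant basis. The strategy is induction on $|\G|$ via the Sylow/prime structure, mirroring the proof of \lemref{lem:genesis}. First handle the case $\G=P$ a $p$-group. Here $G$ is a free group on which a finite $p$-group acts; by a theorem on actions of finite groups on free groups (the non-abelian analogue of the fact that permutation modules over $p$-groups are the relevant projectives, cf.\ work of Nielsen, or Dyer--Scott \cite{DyerScott} on finite group actions on free groups, or the equivariant Stallings--Swan machinery of \cite{GMP}), $G$ becomes free with basis a $P$-set. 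This step uses triviality of $H^1$ only implicitly; one may instead argue geometrically: a finite $p$-group acting on a tree (the Cayley graph of $F(S)$) has a fixed point, and $G$ acting freely cocompactly-up-to-the-action yields, by equivariant Bass--Serre theory, a decomposition of $G$ as a free $P$-group. For general finite $\G$, pick a prime $p$ and a Sylow $p$-subgroup; the triviality of all $H^1(P;G)$ together with the structure theory lets one patch the $P$-bases. Concretely I would deduce directly from $H^1(\G;G)=\{1\}$ that every subgroup $\widetilde\G\le\GG$ projecting isomorphically to $\G$ is $G$-conjugate to the standard $\G$; combined with $\cld_\FG(\GG)\le 1$ — which we get from \thmref{thm:ESS} once we know $G$ is $\G$-free, so this is circular — instead one argues via \lemref{lem:Acd1}/Shapiro (\lemref{lem:ShapiroRel}) that the $H^1$ hypothesis forces $I(\GG/\G)$ to restrict to a projective module over every $p$-subgroup, then globalises. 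The cleanest route: show $H^1(P;G)=\{1\}$ for all $P\le\G$ implies the $\G$-action on the Bass--Serre tree $T$ of $G=F(S)$ has the property that every finite subgroup of $\GG$ fixes a point of $T$, hence $T$ is a model for $E_\FG(\GG)$, hence $\cld_\FG(\GG)\le 1$, and then conclude $G$ is $\G$-free by \thmref{thm:ESS}.

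\medskip

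\textbf{Main obstacle.} The hard part is the ``if'' direction, specifically upgrading ``$H^1(P;G)$ trivial for all $p$-subgroups $P$'' to the genuinely global statement that $G$ admits a single $\G$-invariant free basis. Triviality of $H^1$ only says the twisted copies of each $P$ are conjugate into the standard one; assembling these local conjugacies into one coherent $\G$-equivariant structure on a basis is where the real work lies, and is exactly the place where one needs either the equivariant Stallings--Swan theorem \thmref{thm:ESS} (used in the reverse direction, which risks circularity and must be set up carefully) or a direct equivariant Bass--Serre argument showing the Cayley tree of $F(S)$ is a model for $E_\FG(\GG)$. I expect the write-up to proceed by the latter: verify fixed points of finite subgroups on the tree using the $H^1$ hypothesis, identify the tree as $E_\FG(\GG)$, invoke $\gld_\FG(\GG)\le 1$, and close the loop with \thmref{thm:ESS}.
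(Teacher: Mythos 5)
Your sketch of the ``only if'' direction is sound and gives a valid alternative to the paper's argument: with a $\G$-invariant basis $S$ in hand, the Cayley tree of $F(S)$ does carry a $\GG$-action, any $1$-cocycle $c\colon P\to G$ yields a finite twisted copy $\widetilde P$ which fixes a vertex $g$, and the vertex stabilizer $(g,1)\G(g,1)^{-1}$ forces $c$ to be principal. The paper instead argues cohomologically: a non-principal cocycle for $P$ gives a finite $\widetilde P\notin\FG$, and Shapiro's Lemma plus \cite[Theorem~1.6]{GMP} give $2\le\cld_{\FG\cap\widetilde P}(\widetilde P)\le\cld_\FG(\GG)$, contradicting $\G$-freeness via \thmref{thm:ESS}. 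Both routes work.

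The genuine gap is in your ``cleanest route'' for the converse. You propose to show that every finite subgroup of $\GG$ fixes a point of ``the Bass--Serre tree $T$ of $G=F(S)$,'' identify $T$ as a model for $E_\FG(\GG)$, deduce $\cld_\FG(\GG)\le1$, and close by \thmref{thm:ESS}. But if $S$ is an arbitrary free basis of $G$ (and you cannot yet assume it is a $\G$-set --- that would beg the question), the Cayley tree of $F(S)$ carries no natural $\G$-action, hence no $\GG$-action; the argument does not start. The fix is to replace your tree by the tree furnished by the Karrass--Pietrowski--Solitar/Cohen/Scott theorem: $\GG$ is virtually free, so it acts on a tree $T$ with finite stabilizers. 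Then your plan does go through, as follows: the $H^1$ hypothesis is precisely the condition in \lemref{lem:GequalsFIN} (i.e.\ \cite[Proposition~4.1]{GMP}) ensuring $\FG=\FIN$, so the isotropy of $T$ lies in $\FG$, $T$ is a model for $E_\FG(\GG)$, and \thmref{thm:ESS} finishes. Note that the paper's proof does \emph{not} take this shortcut; instead it works with the quotient graph $X=T/G$, constructs a $\G$-equivariant spanning tree by appealing to Kano--Sakamoto (checking connectivity of the fixed subgraphs $X^{\G_x}$ via the $H^1$ hypothesis), and reads off the $\G$-permuted basis directly. Your other suggested strategies (Sylow induction with ``patching'' of local bases, or appealing to module-theoretic characterisations) are, as you suspected, not easily made precise; the assembly problem you identify as the main obstacle is real, and the tree-theoretic route is the one that actually resolves it.
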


\begin{proof}
Suppose $H^1(P;G)\neq\{1\}$ for some $P\leq \G$. A non-principal $1$-cocycle $c: P\to G$ determines a finite subgroup $\tilde{P}\leq \GG$
not in the family $\FG$, see \cite[Proposition 4.1]{GMP}. Then by Shapiro's Lemma for Bredon cohomology and \cite[Theorem 1.6]{GMP},
\[
2\leq \cld_{\FG\cap \tilde{P}}(\tilde{P})\leq \cld_\FG(\GG).
\]

To prove the converse, we have to delve a little deeper into the proof of \cite[Theorem 1.5]{GMP}. Since $G\rtimes\G$ is virtually free, it acts on a tree $T$ with finite stabilisers by a result of Karass--Pietrowski--Solitar \cite{KPS}, Cohen \cite{Cohen} and Scott \cite{Scott}. The subgroup $G\leq \GG$ acts freely on $T$, and the subgroup $\G\leq \GG$ fixes some vertex $v_0$ (as must any finite group acting on a tree). Hence the quotient graph $X:=T/G$ inherits an action of $\G$ which fixes a vertex $x_0=[v_0]$, and $\pi_1(X,x_0)$ is isomorphic as a $\G$-group to $G$. 

It remains to show that $\pi_1(X,x_0)$ has a basis permuted by $\G$. For this we show that $X$ is $\G$-homotopy equivalent to a $\G$-CW complex with one vertex, by showing that $X$ admits a $\G$-equivariant spanning tree. By the main result of \cite{KanoSakamoto} (which works also for infinite graphs) it suffices to show that for each vertex $x\in X$ the subgraph $X^{\G_x}$ fixed by the stabiliser subgroup $\G_x$ of $x$ is connected. Now the proof concludes exactly as in the proof of \cite[Theorem 1.5]{GMP}: the assumption that $H^1(\G_x;G)=\{1\}$ allows us to conclude that the $G$-orbit of vertices of $T$ represented by $x$ contains a fixed point.
\end{proof}

\begin{thm}\label{thm:H1Adamson}
Let $G$ be a free $\G$-group with $\G$ finite. Then $\cld([\GG:\G])\le1$ if and only if $H^1(P;G)$ is trivial for all $p$-subgroups $P\le \G$ and all primes $p$.
\end{thm}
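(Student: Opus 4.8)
The plan is to reduce the statement to a primewise version of Lemma~\ref{lem:nonabH1}, using the relationship between Adamson and Bredon cohomology together with Lemma~\ref{lem:genesis}. First I would recall from Proposition~\ref{prop:AleB} that $\cld([\GG:\G])=\FG\cld(\GG)$, so the Inassaridze dimension is computed via $(\GG,\FG)$-projective resolutions. By (the proof of) Lemma~\ref{lem:Acd1}, $\cld([\GG:\G])\le 1$ is equivalent to $I(\GG/\G)$ being $(\GG,\FG)$-projective, which in turn (via the adjunction between induction and restriction, as in Example~\ref{ex:aug}) amounts to the augmentation sequence $0\to I(\GG/\G)\to\Z(\GG/\G)\to\Z\to 0$ being $(\GG,\FG)$-split; equivalently, it must split as a sequence of $H$-modules for every $H\in\FG$, i.e.\ for every subgroup of $\GG$ that is subconjugate to $\G$.

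The key point is then to detect this splitting primewise. A subgroup $H\in\FG$ is finite (being subconjugate to the finite group $\G$), so Lemma~\ref{lem:genesis} applies: the sequence splits over $\Z H$ if and only if it splits over $\Z P$ for every $p$-subgroup $P\le H$ and every prime $p$. Since every $p$-subgroup of $\GG$ lying in $\FG$ is subconjugate to $\G$ and hence (after conjugating) contained in $\G$, the condition ``$(\GG,\FG)$-split'' is equivalent to ``split over $\Z P$ for every $p$-subgroup $P\le\G$''. But $P\le\G$ is a \emph{direct} factor of the subgroup $G\rtimes P\le\GG$ (it is a retract), so by the same induction/restriction argument applied to the pair $(G\rtimes P,P)$, splitting of the augmentation sequence over $\Z P$ is exactly the condition $\cld([G\rtimes P:P])\le 1$. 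Here I should be a little careful: the relevant permutation module is $\Z(G\rtimes P/P)$, and one checks this is the restriction to $G\rtimes P$ of $\Z(\GG/\G)$ together with the fact that $\Z(\GG/\G)|_P$ and $I(\GG/\G)|_P$ decompose compatibly with this. Thus the whole problem is reduced to the case $\G=P$ a $p$-group.

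For the $p$-group case I would invoke Lemma~\ref{lem:nonabH1} together with Theorem~\ref{thm:ESS}: since $G$ is a free $P$-group, $\cld_{\mathcal{G}}(G\rtimes P)\le 1$ iff $G$ is $P$-free iff $H^1(Q;G)$ is trivial for all $Q\le P$. It therefore suffices to show that for a $p$-group $P$, $\cld([G\rtimes P:P])\le 1$ is equivalent to $\cld_{\mathcal{G}}(G\rtimes P)\le 1$ — i.e.\ that Adamson and Bredon dimensions agree in the relevant low range for $p$-groups acting on free groups. The forward implication $\cld([G\rtimes P:P])\le 1 \Rightarrow \cld_{\mathcal{G}}(G\rtimes P)\le 1$ is the substantive one; for its converse one uses Corollary~\ref{cor:AleB}. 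To get the forward direction, I would argue contrapositively exactly as in the first half of the proof of Lemma~\ref{lem:nonabH1}: if $G$ is not $P$-free then some $H^1(Q;G)\neq\{1\}$ produces a finite subgroup $\tilde Q\le G\rtimes P$ not in $\mathcal{G}$, and I claim the augmentation sequence $0\to I(G\rtimes P/P)\to\Z(G\rtimes P/P)\to\Z\to 0$ fails to split over $\Z\tilde Q$, which forces $\cld([G\rtimes P:P])\ge 2$. The obstruction to splitting over $\Z\tilde Q$ should be exactly the non-vanishing of $H^1(\tilde Q;\Z(G\rtimes P/P))$ detecting that $\tilde Q$ is not subconjugate to $P$; alternatively one can appeal directly to Shapiro's Lemma for $\F$-relative cohomology (Lemma~\ref{lem:ShapiroRel}) to transfer a nonzero class from $\tilde Q$ — noting $\mathcal{G}\cap\tilde Q$ consists only of proper subgroups of $\tilde Q$, so Corollary~\ref{cor:cd0} gives a nonzero $H^2$.

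\emph{Main obstacle.} The delicate step is the last one: showing that a non-principal cocycle $c:Q\to G$ genuinely obstructs $(G\rtimes P,\mathcal{G})$-projectivity of $I(G\rtimes P/P)$ in \emph{Adamson} (rather than Bredon) cohomology — i.e.\ pinning down why $\tilde Q\notin\mathcal{G}$ implies the augmentation sequence does not $\Z\tilde Q$-split. The cleanest route is probably to observe that $\Z(G\rtimes P/P)$ restricted to $\tilde Q$ is a permutation module on the $\tilde Q$-set $(G\rtimes P)/P$, whose $\tilde Q$-fixed points compute whether $\tilde Q$ stabilises a coset, i.e.\ whether $\tilde Q$ is conjugate into $P$; a splitting of the augmentation over $\Z\tilde Q$ would produce a $\tilde Q$-fixed element of $\Z(G\rtimes P/P)$ mapping to $1$, forcing such a coset to exist (after reducing mod $p$ and using that $\tilde Q$ is a $p$-group, so a fixed vector with augmentation a unit yields a fixed coset), contradicting $\tilde Q\notin\mathcal{G}$. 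I expect this fixed-point/transfer argument to be the part requiring the most care, and it is essentially a re-packaging of \cite[Proposition 4.1]{GMP} in the Adamson setting.
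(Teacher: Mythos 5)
Your high-level plan matches the paper's: reduce to $p$-subgroups $P\le\G$, use Lemma~\ref{lem:nonabH1} and Theorem~\ref{thm:ESS} to tie triviality of $H^1$ to $P$-freeness to $\cld_{\mathcal{P}}(G\rtimes P)\le 1$, push down to Adamson via Corollary~\ref{cor:AleB}, and then glue over primes by an averaging argument modelled on Lemma~\ref{lem:genesis}. However, two of your steps are genuinely wrong as stated.

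First, the reduction of ``$I(\GG/\G)$ is $(\GG,\FG)$-projective'' to ``the augmentation sequence $0\to I(\GG/\G)\to\Z(\GG/\G)\to\Z\to 0$ is $(\GG,\FG)$-split, i.e.\ splits over $\Z H$ for every $H\in\FG$'' is false, because that condition is \emph{vacuous}: every $H\in\FG$ is subconjugate to $\G$, hence (after conjugating) fixes the coset $\G\in\GG/\G$, and sending $1\mapsto \G$ gives an $H$-splitting. So this sequence is always $(\GG,\FG)$-exact, for any $\G$-group $G$, and the whole primewise detection via Lemma~\ref{lem:genesis} that you build on top of it proves nothing. The correct content of ``$I(\GG/\G)$ is $(\GG,\FG)$-projective'' is that every $(\GG,\FG)$-\emph{epimorphism onto} $I(\GG/\G)$ admits a $\GG$-module splitting, and it is to such an arbitrary epimorphism $\pi\colon M\to I(G)$ --- not to the fixed augmentation sequence --- that one restricts to $G\rtimes P_i$ for Sylow subgroups $P_i$, obtains $G\rtimes P_i$-splittings $s_i$ (using $(G\rtimes P_i,P_i)$-projectivity of $I(G)$, which you do correctly derive from Lemma~\ref{lem:nonabH1}, Theorem~\ref{thm:ESS} and Corollary~\ref{cor:AleB}), and then averages $t_i=\sum_{\gamma\in\GG/(G\rtimes P_i)}\gamma s_i\gamma^{-1}$ and takes a $\Z$-linear combination with $\sum a_i\lvert\G:P_i\rvert=1$. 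That is the argument the paper actually runs.

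Second, and more seriously, your converse is missing its key technical input. You correctly identify that a non-principal cocycle yields a $p$-subgroup $\tilde Q\le G\rtimes P$ outside $\FG$, and that Shapiro's Lemma~\ref{lem:ShapiroRel} reduces the problem to showing $(\FG\cap\tilde Q)\cld(\tilde Q)\ge 2$. But you then cite Corollary~\ref{cor:cd0}, which asserts the \emph{opposite}: that $\F\cld(\G)=0$ (the conclusion $\ge 2$ there is for the Bredon dimension $\cld_\F$, not the Adamson one). Moreover its hypothesis --- that $\F$ contain all $p$-subgroups --- fails here, since for a minimal $\tilde Q$ the family $\FG\cap\tilde Q$ consists of proper subgroups of the $p$-group $\tilde Q$, which omits $\tilde Q$ itself. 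What is actually needed is the much stronger statement that for a $p$-group $P$ the $\F$-relative cohomological dimension with $\F$ the family of proper subgroups is \emph{infinite}; this is Lemma~\ref{lem:infinite} in the paper, and its proof is a non-trivial mod-$p$ Euler-characteristic argument using the structure theory of $kP$-modules. Without that lemma the converse does not close: the heuristic that ``$\tilde Q\notin\FG$ obstructs a $\Z\tilde Q$-splitting of the augmentation sequence'' does not by itself bound the Adamson dimension from below, since $\tilde Q\notin\FG$ and Adamson cohomology only ``sees'' subgroups in the family.
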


\begin{proof}
First suppose that $H^1(P;G)$ is trivial for all $p$-subgroups $P\le \G$. To show that $\cld([\GG:\G])\le1$, it suffices by Lemma \ref{lem:Acd1} to show that the augmentation module $I(\GG/\G)\cong I(G)$ is $(\GG,\G)$-projective. This is equivalent to showing that every $(\GG,\G)$-epimorphism $M\to I(G)$ admits a $\GG$-module splitting.

Let $P\le \G$ be a $p$-subgroup. We may regard $G$ as a free $P$-group, and as such it is $P$-free by Lemma \ref{lem:nonabH1}. Therefore
\[
\cld([G\rtimes P:P])\leq \cld_\mathcal{P}(G\rtimes P) \leq 1,
\]
where $\mathcal{P}$ denotes the family of subgroups of $G\rtimes P$ generated by $P$. Applying Lemma~\ref{lem:Acd1} again, we see that $I(G)$ regarded as a $G\rtimes P$-module is $(G\rtimes P,P)$-projective. 

Any $(\GG,\G)$-epimorphism $\pi:M\to I(G)$ may be regarded as a $(G\rtimes P,P)$-epimorphism via restriction, and therefore admits a $G\rtimes P$-module splitting. Now we mimic the proof of Lemma \ref{lem:genesis} to show that $\pi$ admits a $\GG$-module splitting. Namely, choose Sylow subgroups $P_1,\ldots, P_k$ (one for each prime dividing $|\G|$) and $G\rtimes P_i$-splittings $s_i:I(G)\to M$ of $\pi$. Then set
\[
t_i = \sum_{\gamma\in (\GG/G\rtimes P)} \gamma s_i \gamma^{-1}.
\]
Then there are integers $a_1,\ldots, a_k$ such that $\sum a_i t_i$ is a $\GG$-module splitting of $\pi$. 

To prove the converse, suppose that there exists a $p$-subgroup $P\leq\G$ with $H^1(P;G)\neq\{1\}$. Then there exists a subgroup $\tilde{P}\leq \GG$ which projects isomorphically to $P$ and is not in the family $\FG$. We may take a minimal such $\tilde{P}$, so that any proper subgroup of $\tilde{P}$ is in $\FG$. Then by Shapiro's Lemma \ref{lem:ShapiroRel},
\[
\F\cld(P) = (\FG\cap \tilde{P})\cld(\tilde{P})\leq \cld([\GG:\G]),
\]
where $\F$ is the family of all proper subgroups of the $p$-group $P$. The proof is now completed by the following lemma.
\end{proof}

\begin{lem}\label{lem:infinite}
Let $P$ be a $p$-group and let $\F$ be the family of all proper subgroups of~$P$. Then 
$\F \cld(P)=\infty$.
\end{lem}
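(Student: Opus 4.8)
The plan is to show that $\F H^n(P; M) \ne 0$ for arbitrarily large $n$ by choosing a suitable coefficient module $M$ and comparing $\F$-relative cohomology with something we understand. Since $\F$ is the family of all proper subgroups of the $p$-group $P$, and $P$ itself is the unique maximal element, the key observation is that $\F$-relative cohomology of $P$ should ``see'' ordinary cohomology of $P$ in a suitable sense: a $(P,\F)$-projective resolution of $\Z$ is in particular a resolution which is $H$-split for every proper $H \le P$, but need not be $P$-split, so the resulting cohomology detects the ``top'' of $P$. I would first recall from Proposition~\ref{prop:AleB} that $\F H^i(P; M) \cong H^i_\F(P; \OP{coind}_{\{1\}}^\F(M))$, reducing the claim to showing $\cld_\F(P) = \infty$. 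So it suffices to prove that the Bredon cohomological dimension of a $p$-group $P$ with respect to its family of proper subgroups is infinite.

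The main step is then to produce an $\OF(P)$-module witnessing unbounded cohomology. A clean approach: consider the short exact sequence of $\OF(P)$-modules coming from the augmented chain complex of $E_\F(P)$, or more directly, analyse the ``top'' quotient. Let $\ul{I}$ denote the kernel of the augmentation $\ul{\Z}(P/1) \to \ul{\Z}$ restricted to $\F$ (note $P/1$ has trivial isotropy, so $\ul{\Z}(P/1)$ is a free $\OF(P)$-module). Actually the cleanest route is via the following: there is a quotient functor to a one-object category. Since $P$ is its own unique maximal proper-free subgroup situation, one can identify $H^*_\F(P; \ul{M})$ for $\ul{M}$ supported appropriately with $\Ext^*_{\Z P}$ of modules that are projective over every proper subgroup but not over $P$ itself. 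By a theorem of Chouinard-type / relative homological algebra for $p$-groups, a $\Z P$-module which is projective relative to the family of proper subgroups of a $p$-group need \emph{not} be projective (this is precisely the failure measured by the Tate cohomology $\widehat{H}^*(P;\Z)$, which is nonzero in all degrees since $P$ is a nontrivial finite group). Concretely, take $M = \Z$ with trivial action: then $I(P) = \ker(\Z P \to \Z)$ is $(P,\F)$-projective (indeed $\F$-projective) but has infinite projective dimension over $\Z P$, since $\cld(P) = \infty$ for any nontrivial finite group. Running the analogue of Lemma~\ref{lem:Acd1}'s dimension-shifting, $\F H^i(P;\Z) \cong \Ext^{i-1}_{(P,\F)}(I(P), \Z)$ for $i \ge 2$, and I would compute these relative Ext groups to be nonzero in all degrees by comparing with ordinary Tate cohomology $\widehat{H}^{i-1}(P;\Z)$.

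The concrete mechanism I would use: since $\F$ contains $\{1\}$, every $(P,\F)$-exact sequence is in particular $\{1\}$-split, i.e.\ $\Z$-split; conversely there is a ``$\F$-complete resolution'' built by splicing a $(P,\F)$-projective resolution of $\Z$ with a $(P,\F)$-injective coresolution, giving rise to $\F$-relative Tate cohomology $\widehat{\F H}^*(P;\Z)$. The point is that for a $p$-group $P$, the identity module $\Z$ is not $(P,\F)$-projective (equivalently $\Z$ is not a direct summand of any $\F$-projective, which follows because restricting to a Sylow... here $P$ is already a $p$-group, so one uses that $\Z$ is not projective relative to proper subgroups of a $p$-group — a standard fact, e.g.\ via Tate cohomology $\widehat{H}^0(P;\Z) = \Z/|P| \ne 0$). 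Hence $\widehat{\F H}^0(P;\Z) \ne 0$, and then dimension-shifting through the augmentation sequences for $\Z(P/P') $ over various proper $P'$, or simply Nakaoka-style periodicity, forces $\F H^n(P;\ul M_n) \ne 0$ for a cofinal set of $n$ where $\ul M_n$ is an appropriate syzygy.

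\textbf{Main obstacle.} The delicate point is establishing rigorously that $\Z$ (or $I(P)$) fails to be $(P,\F)$-projective and, more importantly, that this failure \emph{persists in all degrees} rather than potentially being resolved after finitely many steps — i.e.\ genuinely producing nonzero classes in unboundedly high degrees, not just showing $\F\cld(P) \ge 1$. I expect the right tool is relative (Tate) cohomology together with a periodicity or cofinality argument; an alternative, possibly cleaner, is to exhibit an explicit $\OF(P)$-module and a free $\OF(P)$-resolution of $\ul{\Z}$ whose minimality in each degree is forced by the structure of the orbit category of a $p$-group (every nontrivial quotient $P/H$ with $H$ proper still has nontrivial fixed points under $H$), and read off nonvanishing cohomology directly. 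Either way, the heart of the matter is the purely $p$-local fact that projectivity relative to proper subgroups is strictly weaker than projectivity for a nontrivial $p$-group, amplified to all cohomological degrees.
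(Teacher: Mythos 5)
Your proposal has a fatal logical error at the very first reduction. Proposition~\ref{prop:AleB} gives $\F H^i(P;M)\cong H^i_\F(P;\OP{coind}_{\{1\}}^\F(M))$, and hence the \emph{inequality} $\F\cld(P)\leq\cld_\F(P)$; the coefficients on the Bredon side are constrained to lie in the image of the coinduction functor. Therefore showing $\cld_\F(P)=\infty$ does \emph{not} imply $\F\cld(P)=\infty$ — the implication goes the wrong way. This is precisely the asymmetry that the entire section (and Theorem~\ref{thm:A<B}) is built around: $\F$-relative cohomology can vanish in high degrees even when Bredon cohomology does not, exactly because one cannot realise every $\OF(P)$-module as a coinduced one. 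So the plan ``reduce to $\cld_\F(P)=\infty$'' cannot work. A second problem is the proposed dimension shift $\F H^i(P;\Z)\cong\Ext^{i-1}_{(P,\F)}(I(P),\Z)$: this would require the augmentation sequence $0\to I(P)\to\Z P\to\Z\to 0$ to be $(P,\F)$-exact, but $\Z P\to\Z$ does not split over any nontrivial $H\in\F$ (an $H$-invariant element of $\Z P$ has augmentation divisible by $|H|$), so this sequence is not $(P,\F)$-exact as soon as $|P|>p$. Finally, the key step — that nonvanishing persists in arbitrarily high degrees rather than only in degree $0$ or $1$ — is the heart of the statement, and your sketch defers it to an unproved appeal to ``relative Tate cohomology and periodicity''; you acknowledge this gap yourself, but that is exactly the part that needs an argument.

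The paper's proof takes an entirely different, and more self-contained, route: it assumes $\F\cld(P)=n<\infty$, takes a finite $(P,\F)$-projective resolution of $\Z$ by finitely generated $P$-modules, tensors with $k=\Z/p\Z$ (using that every $(P,\F)$-exact sequence is $\Z$-split), and computes the Euler characteristic $-1+\sum(-1)^i\dim_k(k\otimes M_i)=0$. It then shows, via the characterisation of $(P,\F)$-projectives as summands of $M\otimes\Z\Delta$ with $\Delta$ having proper stabilisers, together with a Krull--Schmidt argument after restricting to a suitable cyclic subgroup $\langle g\rangle$ with $g$ outside a proper normal subgroup containing the stabiliser, that each $\dim_k(k\otimes M_i)$ is divisible by $p$. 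This forces $-1\equiv 0\pmod p$, a contradiction. This Euler-characteristic/counting argument sidesteps both the incorrect reduction and the need for a relative Tate theory. I'd recommend abandoning the Bredon reduction entirely and working directly with a putative finite $(P,\F)$-projective resolution and modular representation theory of~$P$.
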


\begin{proof}
Assume, on the contrary, that $\F \cld(P)=n<\infty$. Since $P$ is finite, the standard homological algebra shows that there exists a $(P,\F)$-projective resolution of length $n$ by finitely generated $P$-modules
$$
\xymatrix{0 \ar[r] & M_n \ar[r]& M_{n-1} \ar[r] &\cdots \ar[r] &  M_0 \ar[r] & \Z \ar[r] &  0.
}
$$

Write $k=\Z/p\Z$. The $(P,\F)$-projective resolution for $\Z$ splits over $\Z$, and will therefore remain exact when we apply the functor $k\otimes_{\Z} - $. We therefore have an exact sequence
$$
\xymatrix{
0\ar[r] & k\otimes M_{n}\ar[r] & \cdots \ar[r] & k\otimes M_0 \ar[r] &  k\ar[r] & 0,
}
$$
whose Euler characteristic is given by
$$-1+\sum_{i=0}^{n} (-1)^{i}\text{dim}_{k}(k\otimes M_i).$$ Since the sequence is exact, its Euler characteristic is zero. We will show now that $\text{dim}_{k}(k\otimes M_i)$ must be divisible by $p$ for every $i$. By reducing the above equation mod $p$ we will get $-1=0$ mod $ p$, which is a contradiction.

To prove the result about the dimensions we use the characterisation of $(P,\F)$-projective modules \cite[Proposition 2.5]{PY}. Namely, they are all direct summands of modules of the form $M\otimes_{\Z} \Z\Delta$, where $M$ is a $P$-module, and $\Delta$ is a $P$-set with stabilisers in $\F$. This implies that every $k\otimes M_i$ is a direct summand of a module of the form $M\otimes k \Delta$, where $M$ is a $kP$-module, and $\Delta$ is a $P$-set with stabilisers in $\F$. 
It will be enough to prove that any indecomposable direct summand $N$ of $M\otimes k \Delta$ satisfies $p\mid\text{dim}_{k}(N)$. 

For this, we note first that decomposing $\Delta$ to a disjoint union of $P$-orbits enables us to write $M\otimes k\Delta\cong \bigoplus_i M\otimes k P/H_i$ with  $H_i\in \F$. By Krull--Schmidt, $N$ is a direct summand of $M\otimes kP/H_i$ for some $i$. Write $H=H_i$. 
Since $H$ is a proper subgroup of the $p$-group $P$, it is contained in a proper normal subgroup $K\subseteq P$. Take $g\notin K$. We will show that every indecomposable direct summand of $M\otimes k P/H$ as a $k\langle g\rangle$-module has dimension divisible by $p$. By considering the restriction of $N$ to $\langle g \rangle$ and using Krull--Schmidt again, this will prove the result.

Write $p^a$ for the order of $g$ in $P$. Then $k\langle g \rangle\cong k[x]/(x^{p^a})$, by taking $x=g-1$ and using the fact that the characteristic of $k$ is $p$. 
We thus see that every indecomposable $k\langle g \rangle$-module is of the form $k[x]/x^i$ for some $i\in\{1,\ldots, p^a\}$. Since $g\notin K$, and $K$ is a normal subgroup of $P$ that contains $H$, it follows that all the stabilisers of $\langle g\rangle$ in $P/H$ are not $\langle g \rangle$. This implies that $M\otimes kP/H$ will decompose into the direct sum of modules of the form $k[x]/x^i\otimes k\langle g\rangle/ \langle g^{p^b}\rangle$ for some $0<b\leq a$. This is the same as the module $$\text{Ind}^{\langle g \rangle}_{\langle g^{p^b}\rangle}\text{Res}^{\langle g \rangle}_{\langle g^{p^b}\rangle} k[x]/(x^i).$$
The group $\langle g^{p^b}\rangle$ is cyclic as well, and we can write $\text{Res}^{\langle g \rangle}_{\langle g^{p^b}\rangle} k[x]/(x^i)$ as the direct sum of modules of the form $k[y]/y^j$, where $y=g^{p^b}-1$ and $0< j \leq p^{a-b}$. We then have 
$$\text{Ind}^{\langle g \rangle}_{\langle g^{p^b}\rangle}k[y]/y^j\cong k[x]/x^{p^bj},$$ which is indecomposable of dimension $p^bj$, which is divisible by $p$ since $b>0$. We are done.   
\end{proof}

\begin{rem}
The above reveals an interesting dichotomy: the Inassaridze dimension $\cld([\GG:\G])$ when $G$ is a non-trivial free $\G$-group and $\G$ is finite is either $1$ or $\infty$.    
\end{rem}

According to the above, to produce examples with $\cld([\GG:\G])\le 1<\cld_\FG(\GG)$, it is enough to produce examples of free $\G$-groups $G$ for which $H^1(\G;G)\neq\{1\}$, while $H^1(P;G)=\{1\}$ for all $p$-subgroups $P\le\G$. For this we use a theorem of Leary and Nucinkis \cite{Leary-Nucinkis}. Recall that a simplicial complex $L$ is called \emph{flag} if every clique in the $1$-skeleton $L^{(1)}$ spans a simplex. The \emph{right-angled Artin group} $G_L$ associated to $L$ has a presentation with generators corresponding to the vertices of $L$, subject only to the relation that vertices adjacent in $L^{(1)}$ commute with each other. There is a homomorphism $G_L\to \Z$ which sends each generator to $1\in\Z$; the kernel of this homomorphism is the \emph{Bestvina--Brady group} $H_L$ associated to $L$. These constructions are natural with respect to simplicial maps of flag complexes. In particular, if $L$ admits a simplicial action of $\G$, then both $G_L$ and $H_L$ become $\G$-groups in a natural way. The action of $\G$ on $L$ is called \emph{admissible} if the setwise and pointwise stabilisers of each simplex $\sigma\subseteq L$ are equal, and is called \emph{effective} if only the neutral element of $\G$ fixes all of $L$.

\begin{thm}[\cite{Leary-Nucinkis}]\label{thm:LN}
Let $L$ be a non-empty finite flag complex with an effective admissible action of a finite group $\G$. Then for every subgroup $P\le\G$,
we have $H^1(P;H_L)=\{1\}$ if and only if $L^P\neq \emptyset$.
\end{thm}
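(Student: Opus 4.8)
The plan is to realise the non-abelian cohomology geometrically via the CAT(0) cube complex on which $G_L\rtimes\G$ acts. Let $X_L$ denote the universal cover of the Salvetti complex of $G_L$: a finite-dimensional, hence complete, CAT(0) cube complex whose vertices, after fixing a base vertex $\tilde v_0$, are the elements of $G_L$, with $G_L$ acting freely by deck transformations. The simplicial action of $\G$ on $L$ lifts to a cellular action on $X_L$ fixing $\tilde v_0$ which on the vertex set is the automorphism action $g\mapsto{}^\gamma g$, so that $G_L\rtimes\G$ acts on $X_L$. The total-exponent homomorphism $\phi\colon G_L\to\Z$ is $\G$-invariant, hence extends affinely over cubes to a $G_L$-equivariant and $\G$-invariant map $f\colon X_L\to\R$ with $f(g)=\phi(g)$ on vertices. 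The first step is a dictionary for $1$-cocycles. Given a $1$-cocycle $c\colon P\to H_L$, the set $\tilde P_c=\{(c(\gamma),\gamma)\mid\gamma\in P\}$ is a finite subgroup of $H_L\rtimes P\le G_L\rtimes\G$; being finite it fixes a point of $X_L$, hence stabilises the cube $C$ that is the carrier of that point, and since it also preserves $f$ it fixes the unique $f$-minimal vertex $g$ of $C$. Because the stabiliser in $G_L\rtimes\G$ of any vertex of $X_L$ is a $G_L$-conjugate of $\G$, this forces $\tilde P_c=(g,1)\,P\,(g,1)^{-1}$, i.e.\ $c(\gamma)=g\cdot{}^\gamma(g^{-1})=:\partial_g(\gamma)$. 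Conversely each $\partial_g$ with $g\in G_L$ is a $1$-cocycle valued in $H_L$, so $\{\partial_g\mid g\in G_L\}$ is precisely the set of $1$-cocycles $P\to H_L$.

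\textbf{Reading off triviality.} Writing $G_L^P=\{z\in G_L\mid{}^\gamma z=z\text{ for all }\gamma\in P\}$ for the fixed subgroup, one checks that $\partial_g=\partial_{g'}$ iff $g^{-1}g'\in G_L^P$, and that $\partial_g$ is principal iff $\partial_g=\partial_h$ for some $h\in H_L$, iff $g\in H_L\cdot G_L^P=\phi^{-1}(\phi(G_L^P))$, iff $\phi(g)\in\phi(G_L^P)$. As $\phi$ is surjective, this gives
\[
H^1(P;H_L)=\{1\}\iff\phi(G_L^P)=\Z,
\]
so the theorem reduces to proving $\phi(G_L^P)=\Z$ if and only if $L^P\neq\emptyset$. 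The direction ``$\Leftarrow$'' is immediate: a $P$-fixed point of $|L|$ lies in the relative interior of a $P$-invariant simplex $\sigma$, which by admissibility is fixed pointwise, so for a vertex $v$ of $\sigma$ the generator $t_v$ lies in $G_L^P$ with $\phi(t_v)=1$, whence $\phi(G_L^P)=\Z$.

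\textbf{The converse.} For ``$\Rightarrow$'' I would take $g\in G_L^P$ with $\phi(g)=1$; then $g\neq1$ has infinite order, and since $G_L$ acts freely on $X_L$ it acts as a hyperbolic isometry, so $\operatorname{Min}(g)$ splits isometrically as $\R\times Y$ with $g$ translating the $\R$-factor. Since $P$ centralises $g$ in $G_L\rtimes\G$, it preserves $\operatorname{Min}(g)$ and its canonical splitting, acting on the $\R$-factor through a finite group of translations, hence trivially; picking $y_0$ in the nonempty fixed set of $P$ on the complete CAT(0) space $Y$ yields a geodesic line $\ell=\R\times\{y_0\}$ fixed pointwise by $P$. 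As $\ell$ is connected and uncountable while the vertex set of $X_L$ is discrete, $\ell$ meets the relative interior of some cube $C$ of dimension at least one; then $P$ stabilises $C$ and therefore preserves the set $\sigma_C$ of edge-directions of $C$, which is a nonempty simplex of $L$, and by admissibility $P$ fixes $\sigma_C$ pointwise, so $L^P\neq\emptyset$. Combining the three reductions yields the theorem, uniformly in $P\le\G$.

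\textbf{Main obstacle.} I expect the last direction to be the crux: producing, from a $P$-fixed element of $\phi$-value one, a positive-dimensional $P$-fixed cube of $X_L$ — hence via admissibility a $P$-fixed vertex of $L$ — is where the structure theory of hyperbolic isometries of CAT(0) spaces and the admissibility hypothesis genuinely enter. The remaining ingredients are bookkeeping with non-abelian $1$-cocycles together with standard facts about finite group actions on CAT(0) cube complexes (existence of fixed points, carriers of points, and the vertex-stabilisers of $G_L\rtimes\G$); finiteness of $\G$ is used precisely for the Bruhat--Tits fixed-point theorem and for the triviality of finite groups of translations of $\R$.
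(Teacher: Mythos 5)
The paper does not prove this theorem: it is cited from Leary--Nucinkis \cite{Leary-Nucinkis}, and the authors merely note that the formulation there, in terms of conjugacy classes of finite subgroups of $H_L\rtimes\G$, translates to the non-abelian cohomology statement given. Your argument is therefore an independent, self-contained proof rather than a reconstruction of reasoning in the present paper. As far as I can tell it is correct. The dictionary $\{1\text{-cocycles }P\to H_L\}=\{\partial_g : g\in G_L\}$ is exactly the translation the authors allude to: $\tilde P_c$ is finite, fixes a point of $X_L$ by Bruhat--Tits, stabilises the carrier cube and, preserving the affine extension of the $\G$-invariant $\phi$, fixes the unique $f$-minimal vertex $g$; since vertex stabilisers in $G_L\rtimes\G$ are precisely the $G_L$-conjugates of $\G$, this forces $c=\partial_g$. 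The bookkeeping reducing $H^1(P;H_L)=\{1\}$ to $\phi(G_L^P)=\Z$ and the easy direction via a $P$-fixed vertex $v$ with $t_v\in G_L^P$ are both fine. In the converse direction you should say a word on why $g$ is semisimple so that $\operatorname{Min}(g)\neq\emptyset$ --- this follows from properness and cocompactness of the $G_L$-action on $X_L$, or from Haglund's theorem for cellular actions on CAT(0) cube complexes --- and cite Bridson--Haefliger II.6.8 for the canonical splitting $\operatorname{Min}(g)\cong \R\times Y$ and the fact that centralising isometries act on the $\R$-factor by translations (hence trivially, $P$ being finite). With those references in place, the argument that the $P$-fixed axis meets the open cell of some cube of dimension at least one, whose edge-directions span a flag simplex of $L$ stabilised and, by admissibility, fixed pointwise by $P$, closes the proof. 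No gaps.
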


Although the statement in \cite{Leary-Nucinkis} concerns conjugacy classes of finite subgroups of $H_L\rtimes\G$, it is easily seen to imply the above statement in terms of non-abelian cohomology. Note that there is no requirement that $L$ be connected. In particular, when $L$ is a finite $\G$-set, considered as a simplicial complex of dimension $0$, $G_L$ is a finitely generated $\G$-free $\G$-group, and $H_L$ is a $\G$-subgroup which need not be (finitely generated or) $\G$-free.

\begin{ex}\label{ex:C6}
Let $L=\{a,b,c,d,e\}$ with the cyclic group $C_6$ acting by
\[
a\mapsto b\mapsto c\mapsto a,\qquad d\mapsto e \mapsto d.
\]
 Theorem \ref{thm:LN} then implies that $H^1(C_6;H_L)\neq\{1\}$ while $H^1(P;H_L)=\{1\}$ for all $p$-subgroups $P\leq C_6$. Then Theorem \ref{thm:H1Adamson} implies that $\cld([H_L\rtimes C_6:C_6])=1$, while Theorem \ref{thm:ESS} and Lemma \ref{lem:nonabH1} imply that $\cld_\FG(H_L\rtimes C_6)>1$.

 In fact it can be seen that in this example the Bredon dimension is infinite. By construction there is a finite subgroup $\tilde{C_6}\leq H_L\rtimes C_6$ that projects isomorphically to $C_6$, is not in $\FG$, but all of whose subgroups are in $\FG$. Hence by Shapiro's Lemma \ref{lem:ShapiroBredon},
\[
\cld_\F(C_6)=\cld_{\FG\cap\tilde{C_6}}(\tilde{C_6})\leq \cld_\FG(H_L\rtimes C_6),
\]
where $\mathcal{F}$ denotes the family of all proper subgroups of $C_6$. Next we use \cite[Lemma 5.2]{GMP}, which states that if $N\lhd\G$ is the kernel of $p:\G\to \G/N$ and $\F$ is any family of subgroups of $\G$, then
\[
\cld_{\F_N}(\G/N)\leq \cld_\F(\G),
\]
where $\F_N=\{L\leq \G/N \mid p^{-1}(L)\in \F\}$. Applying this with $N=C_3$ and $\G=C_6$ with $\F$ the family of proper subgroups gives
\[
\infty=\cld_{\{1\}}(C_2)\leq \cld_\mathcal{F}(C_6)\leq \cld_\FG(H_L\rtimes C_6).
\]
\end{ex}

\begin{ex}\label{ex:A5}
Let $M$ be the Floyd--Richardson example \cite{FloydRichardson} of an acyclic $2$-dimensional complex with an admissible $A_5$-action without global fixed points (an example which is ubiquitous in the geometric group theory literature). Taking $L$ to be the $0$-skeleton of $M$, we again find that $L^{A_5}=\emptyset$ while $L^P\neq \emptyset$ for all $p$-subgroups $P\leq A_5$. It follows that $\cld([H_L\rtimes A_5:A_5])=1$ while $\cld_\FG(H_L\rtimes A_5)>1$. 

We will in fact now show that $\cld_\FG(H_L\rtimes A_5) \leq 3$. Consider the families~$\FG\subset \FIN$ of subgroups of~$H_L\rtimes A_5$ and apply the Bredon cohomology spectral sequence argument of ~\cite[Corollary~4.1]{M-P1}.
This gives
\[
    \cld_\FG(H_L\rtimes A_5)\leq \cld_\FIN(H_L\rtimes A_5) + \sup_{S\in \FIN} \cld_{\FG\cap S}(S).
\]
Let~$S\leq H_L\rtimes A_5$ be any finite subgroup.
Since $L^{A_5}=\emptyset$ and $L^W\neq \emptyset$ for every proper subgroup~$W\leq A_5$, there are the following two cases (see~\cite[Theorem~3]{Leary-Nucinkis}).
Either~$S$ is subconjugate to~$A_5$, in which case $\cld_{\FG\cap S}(S)=0$.
Or~$S$ maps isomorphically to~$A_5$ but is not conjugate to~$A_5$, while all proper subgroups of~$S$ are subconjugate to~$A_5$. 
In this case $\cld_{\FG\cap S}(S)=\cld_\mathcal{P}(A_5)$, where~$\mathcal{P}$ is the family of all proper subgroups of~$A_5$.
By \cite[Example 5.1]{Adem} and \cite[Theorem 1.6]{GMP}, one has $\cld_\mathcal{P}(A_5) = 2$.
Since~$H_L\rtimes A_5$ is virtually free, $\cld_\FIN(H_L\rtimes A_5)=1$.
Together, we conclude that $\cld_\FG(H_L\rtimes A_5) \leq 1+2=3$. On the other hand, we already know that $\cld_\FG(H_L\rtimes A_5) \geq 2$. Hence, $\cld_\FG(H_L\rtimes A_5)$ is either $2$ or $3$. We do not know the exact value.
\end{ex}

Mart\'{i}nez-P\'{e}rez \cite{M-P} and Degrijse--Petrosyan \cite{DP} have given examples of virtually torsion-free groups whose virtual cohomological dimension is less than their Bredon dimension with respect to the family of finite subgroups. We now describe how their examples fit our framework, giving examples of $\G$-groups with $1<\cld([\GG:\G])<\cld_\FG(\GG)$. 

Let $\FIN$ denote the family of finite subgroups. One of the main results of \cite{M-PN} states that for a virtually torsion-free group, the relative dimension $\FIN\cld$ and the 
virtual cohomological dimension
coincide.

\begin{lem}[{\cite[Proposition 4.1]{GMP}}]\label{lem:GequalsFIN}
Let $G$ be a $\G$-group, where $G$ is torsion-free and $\G$ is finite. Then $\FG=\FIN$ if and only if $H^1(P;G)=\{1\}$ for every subgroup $P\leq\G$.
\end{lem}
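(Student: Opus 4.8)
The plan is to describe the two families $\FG$ and $\FIN$ completely explicitly, in terms of graphs of $1$-cocycles, and then compare them term by term. The point that makes this work is the torsion-freeness of $G$: if $F\le\GG$ is finite then $F\cap G$ is a finite subgroup of the torsion-free group $G$, hence trivial, so the projection $\GG\to\G$ restricts to an isomorphism of $F$ onto a subgroup $P\le\G$. Writing out the multiplication in the semidirect product, such an $F$ is forced to be the graph $\tilde P_c=\{(c(\gamma),\gamma)\mid\gamma\in P\}$ of a unique $1$-cocycle $c\colon P\to G$, and conversely every such graph is a finite subgroup mapping isomorphically onto $P$. Hence $\FIN$ consists exactly of the subgroups $\tilde P_c$ as $P$ ranges over subgroups of $\G$ and $c$ over $1$-cocycles $P\to G$.

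Next I would pin down $\FG$. A short computation shows that conjugating $\G$ by $g=(h,\delta)\in\GG$ yields, after reindexing $\gamma\mapsto\delta\gamma\delta^{-1}$, the subgroup $\{(h\cdot{}^{\gamma}(h^{-1}),\gamma)\mid\gamma\in\G\}$, which depends only on the $G$-component $h$ and is precisely the graph of the principal cocycle $\gamma\mapsto h\cdot{}^{\gamma}(h^{-1})$ (principal since with $g_0=h^{-1}$ one has $h\cdot{}^{\gamma}(h^{-1})=g_0^{-1}\,{}^{\gamma}g_0$). Its subgroups are again graphs, namely graphs of the restrictions of this cocycle to subgroups of $\G$. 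Therefore a finite subgroup $\tilde P_c$ lies in $\FG$ if and only if $c$ is a principal $1$-cocycle: if $\tilde P_c\le g\G g^{-1}$, then since $\tilde P_c$ projects isomorphically onto $P$ it must coincide with the unique subgroup of $g\G g^{-1}$ lying over $P$, so $c$ is a restriction of a principal cocycle and hence principal; conversely, if $c(\gamma)=g_0^{-1}\,{}^{\gamma}g_0$ on $P$, then taking $h=g_0^{-1}$ exhibits $\tilde P_c\le(h,1)\,\G\,(h,1)^{-1}$. (This also re-proves the inclusion $\FG\subseteq\FIN$, which in any case holds because $\G$ is finite.)

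Granting these two descriptions, the lemma falls out: $\FG=\FIN$ holds precisely when, for every subgroup $P\le\G$, \emph{every} $1$-cocycle $c\colon P\to G$ is principal, i.e.\ precisely when $H^1(P;G)=\{1\}$ for all $P\le\G$. For the ``only if'' direction one applies this to an arbitrary cocycle $c\colon P\to G$, using $\tilde P_c\in\FIN=\FG$ to deduce $c$ principal; for the ``if'' direction one applies it to an arbitrary $\tilde P_c\in\FIN$ to land in $\FG$, giving $\FIN\subseteq\FG$. I do not expect a genuine obstacle here: the whole argument is bookkeeping with the semidirect-product conjugation formulas together with the standard dictionary between $1$-cocycles and complements of $G$ in $\GG$ already recalled in the excerpt. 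The only place warranting a little care is matching ``subconjugate to $\G$'' with ``graph of a principal cocycle on a subgroup of $\G$'' — concretely, checking that a restriction of a principal cocycle is again principal, and that every subgroup of a cocycle graph is itself a cocycle graph with the evident induced cocycle.
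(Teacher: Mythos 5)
Your proof is correct. This lemma is cited in the paper from \cite[Proposition 4.1]{GMP} rather than proved here, but your argument is the standard one via the dictionary between $1$-cocycles $c\colon P\to G$ and finite subgroups of $\GG$ projecting isomorphically onto $P\le\G$ (a dictionary the paper itself recalls just before Lemma~\ref{lem:nonabH1}), combined with the observation that torsion-freeness of $G$ forces every finite subgroup of $\GG$ to meet $G$ trivially; all the bookkeeping steps you flag (restriction of a principal cocycle is principal, subgroups of a cocycle graph are cocycle graphs, uniqueness of the subgroup of $g\G g^{-1}$ over $P$) check out, and this is indeed the natural route to the statement.
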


\begin{ex}\label{ex:M-P}
Let $p$ be an odd prime and let $C_p$ denote the cyclic group of order~$p$. According to \cite[Example 3.6]{M-P}, there is a finite flag complex $L$ equipped with an admissible effective $C_p$-action having global fixed points (i.e., $L^{C_p}\neq\emptyset$) such that the equivariant Bestvina--Brady group $H_L$ is torsion-free and satisfies
\[
\FIN\cld(H_L\rtimes C_p) = 3 < 4 = \cld_{\FIN}(H_L\rtimes C_p).
\]
By Theorem \ref{thm:LN} we have $H^1(P;H_L)=\{1\}$ for every subgroup $P\leq C_p$, and therefore by Lemma \ref{lem:GequalsFIN} the families $\FIN$ and $\FG$ of the semi-direct product $H_L\rtimes C_p$ coincide, giving
\[
\FG\cld(H_L\rtimes C_p)=\cld([H_L\rtimes C_p:C_p])=3<4=\cld_\FG(H_L\rtimes C_p).
\]
\end{ex}

\begin{ex}\label{ex:3n4n}
Let $p_1,\ldots, p_n$ be distinct odd primes. For $i=1,\ldots, n$, let $L_i$ be a finite flag complex as in Example \ref{ex:M-P}. Set $G:= \prod_{i=1}^n H_{L_i}$ and $\Gamma:=\prod_{i=1}^n C_{p_i}$, and regard $G$ as a $\G$-group with the product action. Then according to \cite[Theorem~C]{DP}, we have
\[
\FIN\cld(\GG)=3n<4n=\cld_\FIN(\GG).
\]
To apply Lemma \ref{lem:GequalsFIN} and conclude that $\FIN=\FG$ in this example, we need to verify that $H^1(P;G)=\{1\}$ for every subgroup $P\le\G=\prod_{i=1}^n C_{p_i}$. Since the primes $p_i$ are distinct, an arbitrary subgroup $P$ of $\G$ is of the form $\prod_{j\in J} C_{p_j}$ for some $J\subseteq \{1,\ldots , n\}$. By repeated application of Lemma \ref{lem:H1prod} below, we have
\[
H^1(P;G)\cong \left(\prod_{j\in J} H^1(C_{p_j};H_{L_j})\right) \times\left(\prod_{i\notin J} H^1(\{1\};H_{L_i})\right).
\]
Since $H^1(C_{p_j};H_{L_j})=\{1\}$ by Theorem \ref{thm:LN}, we see that $H^1(P;G)=\{1\}$ for all $P\leq \G$. Finally, we conclude that 
 \[
\FG\cld(\GG)=\cld([\GG:\G])=3n<4n=\cld_\FG(\GG).
\]
\end{ex}

\begin{lem}\label{lem:H1prod}
For $i=1,2$, let $\G_i$ be a finite group and let $G_i$ be a torsion-free $\G_i$-group. Then
\[
H^1(\G_1\times\G_2;G_1\times G_2)\cong H^1(\G_1;G_1)\times H^1(\G_2;G_2).
\]
\end{lem}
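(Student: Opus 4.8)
The plan is to compute directly with $1$-cocycles and the equivalence relation defining non-abelian cohomology. Write $\G:=\G_1\times\G_2$ and $G:=G_1\times G_2$, with $\G$ acting coordinatewise, so that $\G_2$ acts trivially on $G_1$ and $\G_1$ acts trivially on $G_2$. Given a $1$-cocycle $c\co\G\to G$, write $c=(c_1,c_2)$ with $c_i\co\G\to G_i$. The cocycle identity $c(\sigma\tau)=c(\sigma)\,{}^\sigma c(\tau)$ holds coordinatewise, so each $c_i$ is itself a $1$-cocycle for the (product) $\G$-action on $G_i$.

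First I would show that $c_i$ factors through the projection $\G\to\G_i$. Restricting $c_1$ to the subgroup $\{1\}\times\G_2\le\G$ gives a $1$-cocycle $\G_2\to G_1$ for the \emph{trivial} action, which is simply a group homomorphism; since $\G_2$ is finite and $G_1$ is torsion-free, its image is a finite subgroup of $G_1$ and hence trivial, so $c_1(1,\gamma_2)=1$ for all $\gamma_2$. Feeding this back into the cocycle identity with $\sigma=(\gamma_1,1)$ and $\tau=(1,\gamma_2)$ and taking first coordinates gives $c_1(\gamma_1,\gamma_2)=c_1(\gamma_1,1)$, so $c_1$ depends only on $\gamma_1$ and defines a $1$-cocycle $\bar c_1\co\G_1\to G_1$; symmetrically $c_2(\gamma_1,\gamma_2)=c_2(1,\gamma_2)=:\bar c_2(\gamma_2)$ with $\bar c_2\co\G_2\to G_2$ a $1$-cocycle. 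Thus $c\mapsto(\bar c_1,\bar c_2)$ is a bijection from $Z^1(\G;G)$ onto $Z^1(\G_1;G_1)\times Z^1(\G_2;G_2)$, with inverse $(d_1,d_2)\mapsto\big((\gamma_1,\gamma_2)\mapsto(d_1(\gamma_1),d_2(\gamma_2))\big)$.

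Finally I would check compatibility with the equivalence relations. Since conjugation in $G=G_1\times G_2$ and the $\G$-action are both coordinatewise, two cocycles $c,c'$ are equivalent via some $(g_1,g_2)\in G$ precisely when $\bar c_1$ is equivalent to $\bar c_1'$ via $g_1$ and $\bar c_2$ is equivalent to $\bar c_2'$ via $g_2$; hence $c\sim c'$ in $Z^1(\G;G)$ if and only if $\bar c_1\sim\bar c_1'$ and $\bar c_2\sim\bar c_2'$. Passing to equivalence classes yields the claimed bijection $H^1(\G_1\times\G_2;G_1\times G_2)\cong H^1(\G_1;G_1)\times H^1(\G_2;G_2)$, which visibly sends base point to base point. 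The only input beyond unravelling definitions is the vanishing of $\Hom(\G_j,G_i)$ coming from $\G_j$ finite and $G_i$ torsion-free; everything else is routine bookkeeping, so I do not anticipate a genuine obstacle. (This lemma, applied iteratively, is what is used in \exref{ex:3n4n}.)
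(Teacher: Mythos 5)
Your proof is correct. It is essentially the paper's argument translated from the subgroup picture into cocycle language: the paper identifies $H^1(\G;G)$ with $G$-conjugacy classes of finite subgroups $A\leq\GG$ projecting isomorphically onto $\G$, and uses torsion-freeness of $G_2$ to show that any element of $A$ lying over $(\gamma_1,1)$ has trivial $G_2$-component, so that $A$ splits as $A_1\times A_2\leq (G_1\rtimes\G_1)\times(G_2\rtimes\G_2)$; you prove the exact cocycle-level counterpart, namely that $c_1(1,\gamma_2)=1$ (here torsion-freeness kills the homomorphism $\G_2\to G_1$), and then use the cocycle identity to deduce $c_1(\gamma_1,\gamma_2)=c_1(\gamma_1,1)$, so the cocycle factors coordinatewise. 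The conjugacy/coboundary check is likewise a direct translation. Your version has the small advantage of never invoking the dictionary between $1$-cocycles and complements, so it reads as entirely self-contained bookkeeping; the paper's version is a bit shorter because the subgroup picture makes the ``$A$ splits as a product'' conclusion visually transparent and leverages the dictionary it has already set up (and uses elsewhere, e.g.\ in Lemma~\ref{lem:nonabH1}). Either way the only non-formal input is $\Hom(\G_j,G_i)=\{1\}$ for $i\neq j$, coming from $\G_j$ finite and $G_i$ torsion-free, and you identify that correctly.
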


\begin{proof}
Recall that $H^1(\G;G)$ is in bijection with the set of subgroups of $\GG$ that project isomorphically to $\G$, up to conjugacy by elements of $G$.

Consider a subgroup $$A\leq (G_1\times G_2)\rtimes(\G_1\times \G_2)\cong (G_1\rtimes\G_1)\times(G_2\rtimes\G_2)$$ that projects isomorphically onto $\G_1\times\G_2$. We claim that if $A$ contains an element of the form $\big((g_1,g_2),(\gamma_1,1)\big)$ then $g_2 = 1$. Indeed, if $\big((g_1,g_2),(\gamma_1,1)\big)\in A$ then it has finite order, because $A$ is finite. 
The image of this element under the projection onto $G_2\rtimes \G_2$ is $(g_2,1)$. This element has finite order as well, but since $G_2$ is torsion-free it must be trivial. 
Write now $A_i$ for the subgroup of $A$ that projects isomorphically onto $\G_i$ for $i=1,2$. It follows that $A_i$ is contained in $G_i\rtimes \G_i$ for $i=1,2$ and that $A$ splits as $A_1\times A_2$. Moreover, two such subgroups $A$ and $B$ are conjugate by an element of $G_1\times G_2$ if and only if $A_1$ is conjugate to $B_1$ by an element of $G_1$ and $A_2$ is conjugate to $B_2$ by an element of $G_2$. This gives the desired bijection.
\end{proof}

\section{Comparison of Cegarra--Calcines--Ortega and Bredon dimensions}\label{sec:TvsB}

In this section we provide examples of $\G$-groups for which the Cegarra--Calcines--Ortega dimension is strictly greater than the Bredon dimension (and therefore also than the Inassaridze dimension), and where the Cegarra--Calcines--Ortega dimension is strictly less than the Inassaridze dimension (and therefore also than the Bredon dimension). The following two examples were already considered in \cite[\S 7]{A-NC-M}.

\begin{ex}\label{ex:(ZZ,Z)}
Consider the pair $(K,L)=(\Z\times\Z,\Z)$ where the subgroup is the second factor. Its relative group cohomology is the equivariant group cohomology of $\Z$ regarded as a $\Z$-group with trivial action. As noted at the end of Section \ref{sec:equivdefns}, this implies that 
$$\cld_\FG(\Z\times\Z)=\cld([\Z\times\Z:\Z])=\cld(\Z)=1.$$
 For the Cegarra--Calcines--Ortega dimension we note that from the long exact sequence (\ref{eq:Takasuexact}) and the fact that $\cld(\Z\times\Z)=2>1=\cld(\Z)$ it is easily deduced that $\cld(\Z\times\Z,\Z)=\cld(\Z\times\Z)=2$. So in this example the Cegarra--Calcines--Ortega dimension exceeds the Bredon dimension. 

More generally, if $L$ is normal in $K$ with $\cld(L)<\cld(K)$ then $\cld_\L(K)=\cld(K/L)$, while $\cld(K,L)=\cld(K)$.
\end{ex}

\begin{ex}\label{ex:(Z,2Z)}
Consider the pair $(K,L)=(\Z,2\Z)$. Notice that this example does not fit into the framework of equivariant group cohomology, since $2\Z$ is not a retract of $\Z$. By normality, both the Adamson and the Bredon dimension equal $\cld(\Z/2\Z)$, hence are infinite. We claim that $\cld(\Z,2\Z)=2$. Since $\cld(\Z)=\cld(2\Z)=1$, the long exact sequence (\ref{eq:Takasuexact}) implies immediately that $\cld(\Z,2\Z)\le2$. Taking coefficients in the sign module $\Z^{-}$ gives an exact sequence
\[
\xymatrix{
\cdots \ar[r] & H^1(\Z;\Z^{-}) \ar[r] \ar@{=}[d] & H^1(2\Z;\Z^{-}|_{2\Z}) \ar[r] \ar@{=}[d] & H^2(\Z,2\Z;\Z^{-})\\
& \Z/2\Z & \Z
}
\]
which shows that $H^2(\Z,2\Z;\Z^{-})\neq 0$, and hence $\cld(\Z,2\Z)=2$.
\end{ex}

Example \ref{ex:(ZZ,Z)} fits into the framework of equivariant group cohomology, but for a $\G$-group with trivial action. We would like to provide examples of $\G$-groups with non-trivial actions for which the Cegarra--Calcines--Ortega dimension exceeds the Bredon dimension, and for which the Inassaridze dimension exceeds the Cegarra--Calcines--Ortega dimension.

In order to achieve our first goal, we prove an equivariant Stallings--Swan type result for Cegarra--Calcines--Ortega equivariant group cohomology. This will be deduced from the following result, whose statement is due to Alonso \cite{Alonso}. An action of a group on a complex is said to be \emph{$0$-unfree} if the stabiliser of any positive dimensional cell is trivial.

\begin{thm}[{\cite[Theorem 3]{Alonso}}]\label{thm:DA}
A group pair $(K,L)$ has $\cld(K,L)\le n$ if and only if $K$ acts admissibly and $0$-unfreely on an acyclic $n$-dimensional complex with a single orbit of vertices of type $K/L$.
\end{thm}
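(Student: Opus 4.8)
The plan is to reduce the statement to one about projective dimension and then realise resolutions by $K$-CW complexes, in the spirit of the Eilenberg--Ganea--Swan theory. As in the proof of \lemref{lem:Tcd1}, a dimension shift in \defref{def:Takasu} gives $\cld(K,L)=\operatorname{pd}_{\Z K}I(K/L)+1$, so it suffices to prove that $\operatorname{pd}_{\Z K}I(K/L)\le n-1$ if and only if $K$ acts admissibly and $0$-unfreely on an acyclic complex of dimension $\le n$ with a single orbit of vertices of type $K/L$. For $n=1$ this is \lemref{lem:Tcd1} reformulated, and is essentially the structure theorem of Dicks.

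For the implication from geometry to algebra, suppose $K$ acts admissibly and $0$-unfreely on an acyclic $n$-dimensional complex $X$ with vertex orbit $K/L$. I would look at the augmented cellular chain complex
\[
0\to C_n(X)\to\cdots\to C_1(X)\xrightarrow{\del_1} C_0(X)\xrightarrow{\epsilon}\Z\to 0,
\]
which is well-defined as a complex of $\Z K$-modules thanks to admissibility, and is exact because $X$ is acyclic. Admissibility makes each cell stabiliser act trivially on its cell, so the $C_i(X)$ are permutation modules; the single vertex orbit gives $C_0(X)\cong\Z(K/L)$, and $0$-unfreeness forces the stabiliser of every positive cell to be trivial, so $C_i(X)$ is a free $\Z K$-module for $i\ge 1$. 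Since $\operatorname{im}\del_1=\ker\epsilon=I(K/L)$, corestricting $\del_1$ and splicing gives a free resolution $0\to C_n(X)\to\cdots\to C_1(X)\to I(K/L)\to 0$ of length $n-1$, whence $\operatorname{pd}_{\Z K}I(K/L)\le n-1$ and $\cld(K,L)\le n$.

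For the converse I would build $X$ skeleton by skeleton, starting from $X^{(0)}=K/L$, for which $\widetilde H_0(X^{(0)})\cong I(K/L)$. As $I(K/L)$ is generated as a $\Z K$-module by the elements $kL-L$, one attaches, for each element of a chosen generating set, a free $K$-orbit of $1$-cells joining the vertices $L$ and $kL$; this yields $X^{(1)}$ with $\widetilde H_0(X^{(1)})=0$ and $H_1(X^{(1)})=\ker\del_1$, and the sequence $0\to\ker\del_1\to C_1(X^{(1)})\to I(K/L)\to 0$ with $C_1(X^{(1)})$ free shows $\operatorname{pd}_{\Z K}\ker\del_1\le\max\{n-2,0\}$. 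Iterating --- at stage $i$ attaching free orbits of $i$-cells along equivariant maps realising a surjection of a free module onto $H_{i-1}$ of the current skeleton --- the projective-dimension bound forces the successive syzygies to become projective after finitely many steps, and the construction can be closed off in dimension $n$ with an acyclic complex (modulo the point discussed below). Throughout, the vertex orbit stays $K/L$ and all positive cells are added freely, so the resulting action is admissible and $0$-unfree.

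The main obstacle, as always in Eilenberg--Ganea--Swan arguments, is the passage from \emph{projective} to \emph{free} at the top of the resolution in the converse: the final syzygy is only a direct summand of a free module, so it cannot be killed by free top cells without first absorbing a complementary projective summand, which for $n\ge 2$ is done by Swan's trick of adjoining trivial cancelling pairs of cells. For $n=1$ even this is unavailable, since adjoining a free $0$-cell would spoil the single vertex orbit of type $K/L$; here the real input is Dicks's theorem that projectivity of $I(K/L)$ already forces $K$ to be a free product of $L$ with a free group, and the required complex is then the associated Bass--Serre tree. A secondary point needing attention is realising the algebraic differentials by actual attaching maps, which requires the relevant homology classes to be spherical: for $i=2$ this is surjectivity of the Hurewicz map onto $H_1$, and for $i\ge 3$ one bootstraps connectivity, passing to the universal cover if $X^{(1)}$ is not simply connected. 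This causes no trouble precisely because the theorem demands only an \emph{acyclic}, not a contractible, complex, and it is also where the ``admissible'' hypothesis earns its keep, keeping the cellular chain complex a complex of permutation modules all the way through.
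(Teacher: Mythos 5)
The paper does not prove this statement: it cites it directly as Theorem~3 of Alonso's paper, with a remark that the $n=1$ case goes back to Dicks via Bass--Serre theory and that Alonso's result is in fact more general (several subgroups, non-transitive actions). So there is no in-paper proof to compare against, and your proposal has to be judged on its own terms.

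Your ``geometry $\Rightarrow$ algebra'' direction is correct and complete: admissibility makes $C_*(X)$ a complex of permutation $\Z K$-modules, $0$-unfreeness makes $C_i(X)$ free for $i\ge1$, the single vertex orbit gives $C_0(X)\cong\Z(K/L)$, and acyclicity gives a length-$(n-1)$ free resolution of $I(K/L)$, whence $\cld(K,L)\le n$ by \lemref{lem:Tcd1} (extended to higher degrees by dimension shifting). The converse is where the work is, and you correctly locate the two genuine difficulties: the projective/free discrepancy at the top syzygy (Swan's trick for $n\ge2$, Dicks's structure theorem for $n=1$), and the realisability of the algebraic boundary maps by attaching maps.

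The one place your sketch is not right as written is the treatment of the realisability problem. You say that for $i\ge3$ one ``passes to the universal cover if $X^{(1)}$ is not simply connected''. This does not help: $K$ does not act on the universal cover of $X^{(1)}$ (only the extension of $K$ by $\pi_1(X^{(1)})$ does), so the chain modules upstairs are not $\Z K$-modules and you lose the resolution you are trying to build. The correct fix is the classical Eilenberg--Ganea one: attach extra free $K$-orbits of $2$-cells so that $X^{(2)}$ is simply connected. This may enlarge $C_2$ beyond the chosen resolution, but that is harmless --- you only need \emph{some} free resolution, and the generalised Schanuel lemma together with $\operatorname{pd}_{\Z K}I(K/L)\le n-1$ still forces the $(n-1)$st syzygy to be projective. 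Once $X^{(2)}$ is simply connected, the Hurewicz theorem makes $\pi_{i-1}(X^{(i-1)})\to H_{i-1}(X^{(i-1)})$ an isomorphism at each subsequent stage (since the lower homology has been killed), and all higher attaching maps exist. The rest of your sketch --- the $n=1$ case via Dicks because one cannot add $0$-cells without spoiling the vertex orbit, and Swan's stabilisation by free trivial cell pairs in dimensions $n-1$ and $n$ for $n\ge2$ --- is correct, and it is a genuine and worth-emphasising point that the $n=2$ case does \emph{not} run into an Eilenberg--Ganea-type obstruction, because only acyclicity (not asphericity) is required and $\pi_1\to H_1$ is always surjective.
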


The case $n=1$ is equivalent (via the Bass--Serre theory of groups acting on trees) to a result proved by Dicks \cite[Theorem IV.2.11]{Dicks}. Alonso proves a more general statement involving group pairs $\big(K,(L_i)_{i\in I}\big)$ and non-transitive actions.

\begin{defn}\label{def:stronglygammafree}
A $\G$-group $G$ is called \emph{strongly $\G$-free} if $G$ is free with basis a free $\G$-set.
\end{defn}

\begin{thm}\label{thm:Stallings-Swan-Takasu}
Let $G$ be a $\G$-group. Then $\cld(G\rtimes\G,\G)\le1$ if and only if $G$ is strongly $\G$-free.
\end{thm}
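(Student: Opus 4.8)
The plan is to use Alonso's characterisation (Theorem~\ref{thm:DA}) in the case $n=1$, together with Bass--Serre theory, to convert the statement $\cld(\GG,\G)\le1$ into a statement about an action of $\GG$ on a tree, and then analyse this tree as a $\G$-group via the quotient by the free $G$-action. One direction is essentially the observation already attributed to Inassaridze and Cegarra--Calcines--Ortega: if $G$ is strongly $\G$-free, with free basis a free $\G$-set $S$, then $G\cong\pi_1(X)$ for a graph $X$ with one vertex and edge set $S/\G$-many orbits, and $\GG$ acts admissibly and $0$-unfreely on the Bass--Serre tree $T$ of this graph, with a single orbit of vertices of type $\GG/\G$ (the vertex stabiliser is exactly $\G$ since $\G$ fixes the base vertex and $G$ acts freely). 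Hence $\cld(\GG,\G)\le1$ by Theorem~\ref{thm:DA}. I would spell out the graph-of-groups/tree model carefully here, as getting the vertex orbit type to be exactly $\GG/\G$ (rather than $\GG/H$ for some larger $H$) uses freeness of the $\G$-action on the basis in an essential way.

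For the converse, suppose $\cld(\GG,\G)\le1$. By Theorem~\ref{thm:DA}, $\GG$ acts admissibly and $0$-unfreely on an acyclic $1$-dimensional complex, i.e.\ a tree $T$, with a single orbit of vertices of type $\GG/\G$; $0$-unfreeness means all edge stabilisers are trivial. The subgroup $G\le\GG$ therefore acts freely on $T$ (edge stabilisers trivial, and vertex stabilisers are $\GG$-conjugates of $\G$, which meet $G$ trivially since $\G$ is a retract-complement of $G$ in $\GG$). So $X:=T/G$ is a graph with $\pi_1(X)\cong G$, and since $T$ had a single $\GG$-orbit of vertices, $X$ has a single $\G$-orbit of vertices, i.e.\ $\G$ acts transitively on the vertices of $X$ with vertex stabiliser $\G$ itself — meaning $X$ has a single vertex fixed by all of $\G$. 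Then $\pi_1(X)\cong G$ is free on the edge set of $X$, and $\G$ permutes the edges; I must check that this $\G$-action on the edge set is \emph{free}, which is where $0$-unfreeness of the original $\GG$-action is used: a $\G$-fixed edge of $X$ would lift to an edge of $T$ with nontrivial stabiliser, contradiction. This identifies $G$ as free with basis the free $\G$-set $E(X)$, i.e.\ strongly $\G$-free.

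The main obstacle I anticipate is the bookkeeping around the orbit type of vertices and the passage between the $\GG$-tree $T$ and the $\G$-graph $X=T/G$: one must verify that ``single orbit of vertices of type $\GG/\G$'' for the $\GG$-action translates precisely into ``$X$ has a single vertex, which is $\G$-fixed'', and conversely. This requires knowing that $\G\cap G=\{1\}$ inside $\GG$ and that conjugates of $\G$ intersect $G$ trivially (both immediate from the semidirect product structure), and a careful comparison of stabilisers upstairs and downstairs. A secondary subtlety is that Alonso's theorem as quoted involves possibly disconnected acyclic complexes; since $1$-dimensional acyclic means tree, connectedness is automatic, but I would remark on this. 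Finally, for the forward direction one should note that the graph $X$ produced need not a priori be finite or the action cofinite, but this causes no problem — strong $\G$-freeness as defined in Definition~\ref{def:stronglygammafree} imposes no finiteness, and the argument goes through for arbitrary $\G$-sets.
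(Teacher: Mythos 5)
Your proposal is correct and follows essentially the same route as the paper. Both directions match: for the forward direction the paper builds the Cayley graph $T(G,S)$ directly and verifies the $\GG$-action is vertex-transitive with vertex stabiliser $\G$ and trivial edge stabilisers (your Bass--Serre tree of a bouquet of $\G$-permuted circles is the same object, just described via covering-space language); for the converse the paper passes to $X=T/G$, observes $X$ is a bouquet of circles with a $\G$-action, and shows $\G$ acts freely on the edges by lifting a stabilising $\gamma$ to an element $(g^{-1},\gamma)$ fixing an edge of $T$, exactly as you do.
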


\begin{proof}
Suppose $G$ is free with basis a free $\G$-set $S$. Consider the Cayley graph $T=T(G,S)$, with vertex set $G$ and edges $(g,gs)$ for $g\in G$, $s\in S$. The standard action of $G$ on $T$ is free. There is also an action of $\G$ on $T$, coming from the action of $\G$ on $G$ (this uses that $S$ is a $\G$-set).  These actions are compatible, so together give an action of $\GG$ on $T$. Explicitly, an element $(h,\gamma)\in \GG$ sends an edge $(g,gs)$ to the edge $(h{}^\gamma g,h{}^\gamma g{}^\gamma s)$. This action is vertex-transitive, since this is true already of the $G$-action. The stabiliser of a vertex $g\in G$ is the $(g,1)$-conjugate of the stabiliser of the identity $1\in G$, which is easily seen to equal $\G$. The stabiliser of an edge $(g,gs)$ is the $(g,1)$-conjugate of the stabiliser of $(1,s)$, which is trivial since $\G$ acts freely on $S$. This shows that the action is $0$-unfree, and by Theorem~\ref{thm:DA} we have that $\cld(\GG,\G)\le1$.

Conversely, suppose that $\cld(\GG,\G)\le1$. By Theorem \ref{thm:DA} the group $\GG$ acts admissibly and $0$-unfreely on a tree $T$ with a single orbit of vertices of type $\GG/\G\cong G$. Since no non-trivial element of $G$ is conjugate in $\GG$ to an element of $\G$, it follows that $G$ acts freely on $T$. The quotient graph $X:=T/G$ has fundamental group $G$, and by the above has a single vertex and is therefore a bouquet of circles. Thus $G$ is freely generated by the oriented edges of $X$. The $\GG$-action on $T$ descends to a $\G$-action on $X$, and it remains only to check that this action freely permutes the oriented edges.

An edge of $X$ is the $G$-orbit $[e]$ of an edge $e$ of $T$. Suppose $\gamma\in\G$ stabilises $[e]$. Then there is an element $g\in G$ such that ${}^\gamma e=ge$. Therefore $(g^{-1},\gamma)\in \GG$ stabilises $e$, and by the $0$-unfreeness assumption $(g^{-1},\gamma)$, and therefore $\gamma$ must be the identity. This proves the claim and the theorem.      
\end{proof}

Combining this result with Theorem \ref{thm:ESS}, we can construct many examples of $\G$-groups $G$ with 
\[
\cld(\GG,\G)>\cld_\FG(\GG)=1.
\]
In fact, this will be true for any $\G$-group $G$ which is $\G$-free but not strongly $\G$-free. While this is obviously true for free groups with trivial action, it is also true whenever the rank of $G$ is less than $|\G|$. 

\begin{ex}\label{ex:C6Takasu}
 Consider the action of $C_6$ on $L=\{a,b,c,d,e\}$ described in Example~\ref{ex:C6}, for which the associated right-angled Artin group $G_L$ is a free $C_6$-group of rank~$5$. Since $L$ is a $C_6$-basis, $G_L$ is $C_6$-free and has Bredon dimension $\cld_\FG(G_L\rtimes C_6)=1$. Here the Cegarra--Calcines--Ortega dimension $\cld(G_L\rtimes C_6,C_6)$ is infinite, as can be seen by examining the long exact sequence (\ref{eq:Takasuexact}) with coefficients in the trivial module $\Z$, together with the Lyndon--Hochschild--Serre spectral sequence for the extension $1\to G_L\to G_L\rtimes C_6\to C_6\to 1$, which shows that the restriction $H^i(G_L\rtimes C_6;\Z)\to H^i(C_6;\Z)$ has non-trivial kernel in infinitely many degrees.
\end{ex}

 As another application of Theorem \ref{thm:Stallings-Swan-Takasu} we are able to identify the projective objects in the category $\G{\text -}\mathsf{Grp}$ of $\G$-groups. Recall that an object $P$ in a category $\mathcal{C}$ is called \emph{projective} if any morphism $P\to N$ in $\mathcal{C}$ admits a lift through any epimorphism $M\twoheadrightarrow N$ in $\mathcal{C}$. Since $\G{\text -}\mathsf{Grp}$ is a concrete category surjective $\G$-homomorphisms are in particular epimorphisms. On the other hand, it follows from \cite{Lind} that an epimorphism in the category $\mathsf{Grp}$ of groups is surjective. Using the right adjoint functor from $\mathsf{Grp}$ to $\G{\text -}\mathsf{Grp}$ of the forgetful functor, we see that an epimorphism in the category $\G{\text -}\mathsf{Grp}$ is also surjective. Now one sees by standard arguments that strongly $\G$-free $\G$-groups are projective, and that any projective object is a retract of a strongly $\G$-free $\G$-group. Note that it is not neccessarily true that a $\G$-subgroup of a strongly $\G$-free $\G$-group is strongly $\G$-free (or even $\G$-free). However, Theorem \ref{thm:Stallings-Swan-Takasu} and the naturality properties of Takasu cohomology allow us to deduce that any retract of a strongly $\G$-free $\G$-group has Cegarra--Calcines--Ortega dimension less than or equal to $1$, hence is strongly $\G$-free.

\begin{cor}\label{cor:projectives}
The projective objects in $\G{\text -}\mathsf{Grp}$ are the strongly $\G$-free $\G$-groups.
\end{cor}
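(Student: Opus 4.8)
The plan is to establish both inclusions, with Theorem~\ref{thm:Stallings-Swan-Takasu} doing the real work in the non-formal direction. Concretely I would break the argument into four steps: (i) epimorphisms in $\G{\text -}\Grp$ are precisely the surjective $\G$-homomorphisms; (ii) every strongly $\G$-free $\G$-group is projective; (iii) every projective $\G$-group is a retract of a strongly $\G$-free one; and (iv) every retract of a strongly $\G$-free $\G$-group is again strongly $\G$-free.

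For (i), surjective $\G$-homomorphisms are epimorphisms because $\G{\text -}\Grp$ admits a faithful functor to $\Set$; for the converse, the forgetful functor $\G{\text -}\Grp\to\Grp$ has a right adjoint (the cofree $\G$-group $H\mapsto\map(\G,H)$, with $\G$ acting by translation on the source), hence preserves epimorphisms, so an epimorphism in $\G{\text -}\Grp$ becomes an epimorphism of groups, which is surjective by Lind's theorem \cite{Lind}. For (ii), if $F$ is free on a free $\G$-set $S$, a $\G$-homomorphism $F\to N$ lifts through a surjective $\G$-homomorphism $M\twoheadrightarrow N$ by choosing a preimage of the image of one generator per $\G$-orbit of $S$, spreading it equivariantly over the orbit (there is no coherence obstruction because $\G$ acts freely on $S$), and extending by the universal property of the free group. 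For (iii), every $\G$-group $G$ is a quotient of the strongly $\G$-free $\G$-group on the free $\G$-set $\G\times G$, via the counit of the adjunction between the forgetful functor $\G{\text -}\Grp\to\Set$ and its left adjoint; this counit is surjective by the triangle identity, so if $G$ is projective then lifting $\mathrm{id}_G$ through it realises $G$ as a retract.

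The crux --- and the step I expect to be the main obstacle --- is (iv), because it cannot be done combinatorially: a $\G$-subgroup, indeed a retract, of a strongly $\G$-free $\G$-group need not be visibly $\G$-free, let alone strongly $\G$-free. Instead I would argue cohomologically. A $\G$-equivariant retraction $G\xrightarrow{\iota}F\xrightarrow{\rho}G$ with $\rho\iota=\mathrm{id}_G$ induces group homomorphisms $\tilde\iota\colon\GG\to F\rtimes\G$, $(h,\gamma)\mapsto(\iota(h),\gamma)$, and $\tilde\rho\colon F\rtimes\G\to\GG$, $(f,\gamma)\mapsto(\rho(f),\gamma)$, both fixing $\G$ and satisfying $\tilde\rho\tilde\iota=\mathrm{id}_{\GG}$; checking these are well defined is the routine part, and uses $\G$-equivariance of $\iota,\rho$ together with $\rho\iota=\mathrm{id}$. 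By naturality of Takasu cohomology in the pair (with the attendant pullback of coefficients), for every $\Z\GG$-module $M$ the composite
\[
H^{i}(\GG,\G;M)\xrightarrow{\tilde\rho^{*}}H^{i}(F\rtimes\G,\G;\tilde\rho^{*}M)\xrightarrow{\tilde\iota^{*}}H^{i}(\GG,\G;M)
\]
is the identity, so $H^{i}(\GG,\G;M)$ is a direct summand of $H^{i}(F\rtimes\G,\G;\tilde\rho^{*}M)$. As $F$ is strongly $\G$-free, Theorem~\ref{thm:Stallings-Swan-Takasu} gives $\cld(F\rtimes\G,\G)\le1$, whence the latter group vanishes for $i\ge2$; therefore $H^{i}(\GG,\G;M)=0$ for all $i\ge2$ and all $M$, i.e.\ $\cld(\GG,\G)\le1$, and a second appeal to Theorem~\ref{thm:Stallings-Swan-Takasu} shows $G$ is strongly $\G$-free, finishing the proof. (Alternatively, since $\tilde\iota$ is injective one may regard $(\GG,\G)$ as a subgroup pair of $(F\rtimes\G,\G)$ and observe that $I(\GG/\G)\cong I(G)$ is a $\Z\GG$-module direct summand, via $\iota$ and $\rho$, of the restriction of the $\Z(F\rtimes\G)$-module $I(F\rtimes\G/\G)\cong I(F)$, which is projective by Lemma~\ref{lem:Tcd1} and Theorem~\ref{thm:Stallings-Swan-Takasu}; restriction to a subgroup preserves projectives, so $I(G)$ is $\Z\GG$-projective and Lemma~\ref{lem:Tcd1} again yields $\cld(\GG,\G)\le1$.)
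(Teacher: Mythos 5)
Your proposal is correct and follows essentially the same route as the paper: establish that epimorphisms in $\G{\text -}\mathsf{Grp}$ are surjective via Lind's theorem and the right adjoint of the forgetful functor to $\mathsf{Grp}$, observe by standard arguments that strongly $\G$-free $\G$-groups are projective and that projectives are retracts of strongly $\G$-free ones, and then close the loop by using naturality of Takasu cohomology together with Theorem~\ref{thm:Stallings-Swan-Takasu} to see that such a retract has $\cld(\GG,\G)\le1$ and is therefore strongly $\G$-free. Your write-up simply spells out the steps that the paper labels ``standard arguments'' and ``naturality properties of Takasu cohomology'', including the useful alternative module-theoretic phrasing of the retract step via Lemma~\ref{lem:Tcd1}.
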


Finally, we want to give examples of $\G$-groups $G$ with Cegarra--Calcines--Ortega dimension less than their Inassaridze dimension, and hence also than their Bredon dimension.

We consider $\G=G$ acting on itself via the conjugation action $\operatorname{Ad}:G\to \operatorname{Aut}(G)$. There is an isomorphism of pairs 
\[
(G\rtimes_{\operatorname{Ad}}G,G) \stackrel{\cong}{\longrightarrow} (G\times G, \Delta(G)),\qquad (g,h)\mapsto (gh,h),
\]
where $\Delta(G)\leq G\times G$ is the diagonal subgroup. Therefore the equivariant group cohomology of $G$ with the conjugation action coincides with the relative group cohomology of the pair $(G\times G, \Delta(G))$. 

\begin{rem}
Connections between the Adamson and Bredon cohomologies of the pair $(G\times G,\Delta(G))$ and Farber's topological complexity $\TC(G)$ of the group $G$ have been explored in \cite{FGLO,BC-VEB}. In particular, letting $\mathcal{D}:=\langle\Delta(G)\rangle$ denote the family generated by the diagonal subgroup, we have
\begin{equation}\label{eq:TCbounds}
\cld_{\{1\}\hookrightarrow\mathcal{D}}(G\times G)\leq \TC(G)\leq \cld_\mathcal{D}(G\times G).
\end{equation}

(Here $\cld_{\{1\}\hookrightarrow\mathcal{D}}(G\times G)$ denotes the maximum $n$ such that for some $\OD(G\times G)$-module $\ul{M}$ the homomorphism $H^n_\mathcal{D}(G\times G;\ul{M})\to H^n(G\times G; \operatorname{res}^\mathcal{D}_{\{1\}}(\ul{M}))$ induced by the restriction functor 
\[
\operatorname{res}^\mathcal{D}_{\{1\}}: \OD(G\times G){\text -}\textsf{mod}\to (G\times G){\text -}\textsf{mod}
\]
is nonzero.) 
It is an open question posed by Farber in 2006~\cite{Far06} to find an algebraic characterisation of~$\TC(G)$.
In the examples given below $\TC(G)$ is strictly less than $\cld([G\times G:\Delta(G)])$ (and therefore also than $\cld_\mathcal{D}(G\times G)$). This means that $\TC(G)$ does not coincide with either the Adamson or Bredon dimension of the pair~$(G\times G,\Delta(G))$ in general.
On the other hand, we do not know of an example for which the lower bound given in~\eqref{eq:TCbounds} is strict.

Note that when $G$ is free abelian we have 
\[
\TC(G)=\cld(G)<\cld(G\times G)=\cld(G\times G,\Delta(G)),
\]
showing that $\TC(G)$ also does not generally coincide with the Takasu dimension of the pair~$(G\times G,\Delta(G))$.
\end{rem}

\begin{ex}\label{ex:Klein}
Let $G=\langle a,b\mid a^2=b^2\rangle$ be the fundamental group of the Klein bottle. Let $c=a^2=b^2$, and note that $G\cong \langle a\rangle \ast_{\la c\ra}\la b\ra$ decomposes as an amalgam. In particular, if $a^m$ and $b^n$ are conjugate in $G$ for some integers $m,n\in \Z$, then in fact $m=n=2k$ for some $k\in \Z$ and $a^m=b^n=c^k$.

We consider $G$ as a $G$-group, with $G$ acting on itself via conjugation. We claim that 
\[
\cld([G\rtimes G: G])=\cld_\FG(G\rtimes G)=\infty, \qquad \cld(G\rtimes G,G)=4.
\]
As noted above, this is equivalent to showing that 
\[
\cld([G\times G: \Delta(G)])=\cld_\FD(G\times G)=\infty, \qquad \cld(G\times G,\Delta(G))=4.
\]
Since $\cld(G)=2$ and $\cld(G\times G)=4$, the latter equality is evident from the long exact sequence (\ref{eq:Takasuexact}) of the pair $(G\times G,\Delta(G))$ (compare Example \ref{ex:(ZZ,Z)}). The other equalities follow from Shapiro's Lemma applied to the subgroup
\[
\delta = \{(a^m,b^m) \mid m\in \Z\}\leq G\times G
\]
generated by the element $(a,b)$. We give the proof for Bredon cohomology; the proof for Adamson cohomology is similar. 

We claim that $\FD\cap \delta$ is the family generated by $\la (c,c) \ra$. Note that $\la (c,c) \ra$ is clearly in $\FD\cap \delta$, so it suffices to show that an element of $\delta$ contained in a subgroup in the family $\FD$ is contained in $\la (c,c)\ra$. This follows from the structure as an amalgam, since if $a^m$ is conjugate to $b^m$ in $G$ then $m=2k$ and $(a^m,b^m)=(c^k,c^k)$. 

Now note that since $\la (c,c)\ra$ is normal in $\delta$ with quotient $\Z/2\Z$, Shapiro's Lemma gives
\[
\infty=\cld(\Z/2\Z)=\cld_{\FD\cap\delta}(\delta)\leq \cld_\FD(G\times G).
\]

Finally, Cohen and Vandembroucq have shown that $\TC(G)=4$ \cite{CV}.
\end{ex}

\begin{ex}
\label{ex:amalg}
The above example can be generalised as follows. If $G=A\ast_C A$ is an amalgam with $C$ normal in $A$, then 
\begin{equation}\label{eq:amalg}
\cld_\FG(G\rtimes G)=\cld_\FD(G\times G)\geq\cld(A/C).
\end{equation}
In particular, if $C$ has finite index in $A$ then both $\cld_\FG(G\rtimes G)$ and $\cld_\FD(G\times G)$ are infinite. On the other hand, the cohomological dimension of $G$ may well be finite, which forces the Takasu dimensions $\cld(G\rtimes G,G)$ and $\cld(G\times G,\Delta(G))$ and the topological complexity $\TC(G)$ to be finite. 

An interesting application of~\eqref{eq:amalg} with $A/C$ torsion-free is given by the commutator subgroup~$[F_n,F_n]$ of the free group~$F_n$ of rank~$n$.
By~\eqref{eq:amalg} the group~$G_n\coloneqq F_n\ast_{[F_n,F_n]} F_n$ satisfies $\cld_\FD(G\times G)\geq \cld(\mathbb{Z}^n)=n$.
On the other hand, we have $\TC(G_n)\leq \cld(G_n\times G_n)\leq 4$ indepently of~$n$.
Hence for~$n>4$ the group~$G_n$ satisfies $\TC(G_n)<\cld_\FD(G_n\times G_n)$, showing again that the upper bound in~\eqref{eq:TCbounds} can be strict.
We leave the details to an interested reader.
\end{ex}

\end{document}